\numberwithin{equation}{section}
\newtheorem{theorem}{Theorem}
\newtheorem{meta-thm}[theorem]{Meta-Theorem}
\newtheorem{lemma}[theorem]{Lemma}
\newtheorem{cor}[theorem]{Corollary}
\newtheorem{proposition}[theorem]{Proposition}
\newtheorem{algorithm}[theorem]{Algorithm}
\newtheorem{remark}[theorem]{Remark}
\newtheorem{definition}[theorem]{Definition}
\newtheorem{conjecture}[theorem]{Conjecture}
\newcommand{\noaverage}[1]{({#1})^0}
\newcommand\beq[1]{ \begin{equation}\label{#1} }
\newcommand{\eeq}{ \end{equation} }
\newcommand\beqa[1]{ \begin{eqnarray} \label{#1}}
\newcommand{\eeqa}{ \end{eqnarray} }
\newcommand{\beqano}{ \begin{eqnarray*} }
\newcommand{\eeqano}{ \end{eqnarray*} }
\newcommand\equ[1]{{\rm (\ref{#1})}}
\def\ep{\varepsilon}
\def\Id{\operatorname{Id}}
\def\Im{\operatorname{Im}}
\def\A{{\mathcal A}}
\def\B{{\mathcal B}}
\def\C{{\mathcal C}}
\def\D{{\mathcal D}}
\def\F{{\mathcal F}}
\def\G{{\mathcal G}}
\def\H{{\mathcal H}}
\def\M{{\mathcal M}}
\def\R{{\mathcal R}}
\def\S{{\mathcal S}}
\def\eps{\varepsilon}
\def\complex{{\mathbb C}}
\def\integer{{\mathbb Z}}
\def\nat{{\mathbb N}}
\def\real{{\mathbb R}}
\def\torus{{\mathbb T}}
\begin{document}
\title[Domains of analyticity of KAM tori for dissipative
systems] {Domains of analyticity of Lindstedt expansions of KAM
tori in dissipative perturbations of Hamiltonian systems}

\author[R. Calleja]{Renato C.  Calleja}
\address{Department of Mathematics and Mechanics, IIMAS, National
  Autonomous University of Mexico (UNAM), Apdo. Postal 20-726,
  C.P. 01000, Mexico D.F., Mexico}
\email{calleja@mym.iimas.unam.mx}

\author[A. Celletti]{Alessandra Celletti}
\address{
Department of Mathematics, University of Roma Tor Vergata, Via della Ricerca Scientifica 1,
00133 Roma (Italy)}
\email{celletti@mat.uniroma2.it}

\author[R. de la Llave]{Rafael de la Llave}
\address{
School of Mathematics,
Georgia Institute of Technology,
686 Cherry St. Atlanta GA. 30332-1160 }
\email{rafael.delallave@math.gatech.edu}

\thanks{
A.C. was partially supported by PRIN-MIUR 2010JJ4KPA$\_$009, GNFM-INdAM and by the European Grant MC-ITN Astronet-II.
R.L. was partially supported by NSF grant DMS-1500943.
R.C. was partially supported by CONACYT grant 133036-F.
Part of this project was written while R.L. was visiting
the JLU-GT Joint Institute for Theoretical Science, and R.C. was visiting the
CRM (Barcelona) with a Llu\'is Santal\'o Visiting Position.}

\baselineskip=16pt              

\begin{abstract}
Many  problems in Physics are described by dynamical systems
that are conformally symplectic (e.g., mechanical systems with a friction proportional to
the velocity,
variational problems with a small discount or thermostated
systems). Conformally symplectic systems
are characterized by the property that they transform a symplectic form
into a multiple of itself.  The limit of small dissipation, which is the object of the
present study, is particularly interesting.

We provide all details for maps, but we present also the (somewhat minor) modifications needed
to obtain a direct proof for the case of differential equations.
We consider a family of conformally symplectic maps $f_{\mu, \eps}$ defined
on a $2d$-dimensional symplectic manifold $\M$ with exact symplectic form $\Omega$; we assume that $f_{\mu, \eps}$ satisfies
$f_{\mu, \eps}^* \Omega = \lambda(\eps) \Omega$.  We assume that
the family depends on a $d$-dimensional parameter $\mu$ (called \emph{drift}) and also on a small
scalar parameter $\eps$. Furthermore, we assume that the conformal factor $\lambda$
depends on $\eps$, in such a way that for $\eps=0$ we have
$\lambda(0)=1$ (the symplectic case).
We also assume that
$\lambda(\eps) = 1 + \alpha \eps^a + O(|\eps|^{a+1})$,
where $a\in\integer_+$, $\alpha\in\complex\backslash\{0\}$.

We study the domains of analyticity in $\eps$ near $\eps = 0$ of perturbative
expansions (Lindstedt series) of the parameterization of the quasi--periodic orbits of
frequency $\omega$ (assumed to be Diophantine)
 and of the parameter $\mu$. Notice that this is a singular perturbation, since
any friction (no matter how small) reduces the set of quasi-periodic
solutions in the system.
We prove that the Lindstedt series
are analytic in a domain in the complex $\eps$ plane, which is
obtained by taking from a ball centered at zero a sequence of smaller
balls with center along smooth lines going through the origin. The
radii of the excluded balls decrease faster than any power of the distance of
the center to the origin.  We state also a conjecture on the
optimality of our results.

The proof is based on the following procedure.  To find a
quasi-periodic solution, one solves an invariance equation for the
embedding of the torus, depending on the parameters of the
family. Assuming that the frequency of the torus satisfies a
Diophantine condition, under mild non--degeneracy assumptions, using a
Lindstedt procedure we construct an approximate solution to all orders
of the invariance equation describing the KAM torus; the zeroth order Lindstedt
series is provided by the solution of the invariance equation of the symplectic case. Starting from
such approximate solution, we implement an {\sl a-posteriori} KAM
theorem to get the true solution of the invariance equation, and we
show that the procedure converges. This allows also the study of
monogenic and Withney extensions.

\end{abstract}

\subjclass[2010]{70K43, 70K20, 37J40}
\keywords{KAM theory, Dissipative systems, Domains of analyticity}

\maketitle

\section{Introduction}
Many problems of physical interest are described by models
given by Hamiltonian systems with a small dissipation.

Some important particular cases of systems with dissipation are the following:
\begin{itemize}
\item
Hamiltonian systems
with a   dissipation proportional to the velocity describing, e.g.,
problems of Celestial Mechanics -- see \cite{Celletti2010};
\item
Euler-Lagrange equations of exponentially discounted
systems -- very natural in finance, when inflation is
present and one wants to
minimize the cost in present money
 --  see \cite{Bensoussan88, IturriagaS11, DFIZ2014};
\item
Gaussian thermostats - see \cite{DettmannM96, WojtkowskiL98}.
\end{itemize}

In the examples above, it was discovered that there is  a nice
geometric structure,
namely that the natural symplectic form is transformed
into a multiple of itself by the dynamics.
Systems that have this geometric property are referred
to as \emph{conformally symplectic}.
Besides their applications to physical problems, conformally
symplectic systems were considered on their own in differential geometry
(\cite{Banyaga02, Agrachev}).

This geometric structure has important consequences for
the dynamics (see Section~\ref{sec:conformallysymplectic}).
Notably for our purposes, there is a KAM theory
with an a-posteriori format and a systematic
way of obtaining perturbative expansions, see \cite{CallejaCL11}.

In this paper, we study the analyticity properties of
the parameterization of the quasi-periodic solutions
and of the drift parameter of conformally symplectic
systems. Perturbative expansions to all orders
are easy to obtain and have been considered for
a long time (as a matter of fact, we will develop also very efficient
methods of computation of perturbative expansions).

Notice that adding a dissipation to a Hamiltonian system is a very singular
perturbation. We expect that a Hamiltonian system admits quasi-periodic
solutions with many frequencies. On the other hand, a system with a
positive dissipation -- even if extremely small -- leads to  the
creation of attractors which have few -- or even none ! -- quasi-periodic
solutions. This is why one has to consider external parameters such as the drift.

Hence, we do not expect that the asymptotic expansions converge and
therefore the parameterization functions will not be analytic in $\eps$ in
any ball centered at the origin.  A common strategy in the literature
of singularly perturbed problems has been to device asymptotic
expansions and show that, even if they do not converge, they can be
resummed. See \cite{Balser, Hardy, BakerGM96} for general mathematical
treatments and \cite{LeGZJ90,ArtecaFC90,KleinertSF01} for surveys of
applications of resummation techniques
to concrete problems in Physics.  For singular
perturbation expansions in dynamical problems similar to the ones in this paper, there
have been quite a number of studies (see, e.g.,
\cite{GentileM96,GallavottiG02,GentileBD06,Gentile10,
  GentileG05,CorsiFG13,CorsiFG14}).
In this work, we follow a different approach from resummation. By using an a-posteriori theorem, we
show (see Theorem~\ref{main})
that the parameterization and the drift are analytic in
$\eps$ in a domain (called  $\G$ and defined precisely
in \eqref{goodset}) obtained by removing from a ball  centered at the origin a
sequence of (much smaller)  balls with centers in smooth curves going through
the origin. The radii of these balls decrease very fast (faster
than any power of the size of the distance to the center of the excluded ball) as the centers of
the excluded balls go to zero. Hence, even if the domain does not contain  a ball
centered at the origin, it is hard to distinguish it from a full ball (see Figure~\ref{fig:N0}).

We emphasize that we only prove rigorously that $\G$ is a lower bound
for the analyticity domain. The results can be improved in many
ways (see Section~\ref{sec:improved}). On the other hand, in
Section~\ref{sec:optimal} we conjecture that the domain $\G$ is
essentially optimal in the sense that for a generic system none of the
excluded balls can be filled completely (we present heuristic arguments
for the conjecture and a proof of a weaker result (Proposition~\ref{destruction})
 that shows that, given any ball,
one can find systems where the analyticity domain does not cover the ball). We note
that the domains we find have only sectors
centered at the origin with very small apertures, say $\pi/a$, where
$a$ is the leading exponent in the change of the conformal factor.

The conjecture that there are no sectors of analyticity with aperture bigger
than $\pi/a$ has consequences
for resummability methods and asymptotic expansions, since uniqueness of the function given by
the expansion and Borel summation often requires that
there are sectors of angular aperture of about $\pi$, see \cite{Sokal}
(the well known Cauchy
functions $\exp(-1/\eps^2)$ provide examples of non-trivial functions
with trivial asymptotic expansions in domains of angular aperture less
than $\pi$).

We also recall that,
since the parameterization is a function, the notion of analytic functions
taking values on parameterizations
requires that we specify a Banach space in which the parameterization
lies.  This is done in Section~~\ref{sec:definitions}. We anticipate
that these spaces for the parameterization are spaces of analytic functions
from a complex extension of the torus to the phase space.

The method of proof of this paper is to formulate a functional
equation for the parameterization of the torus and the drift
expressing that the torus
is  invariant for the map with the adjusted parameters.
First, we show (part A of
Theorem~\ref{mainKAM}) that it is possible to find a solution of the
invariance equation in the sense of formal power series (this is a
very standard order by order perturbation expansion, but in
Section~\ref{fastlindstedt} we also construct fast algorithms
that double the order in each step).  By
truncating appropriately this formal power series, we obtain functions
that solve the invariance equation very approximately.  These
approximate solutions are taken as the initial points of an iterative
procedure which is shown to converge by an a-posteriori theorem (see
Theorem~\ref{mainKAM}), which is very similar to Theorem 20 in
\cite{CallejaCL11}. The domain $\G$ is obtained by examining carefully
the process, and the quantitative and explicit conditions of
Theorem~\ref{mainKAM}.
Notice that for the applications in this paper,
it is essential
that Theorem~\ref{mainKAM} is formulated in an a-posteriori format
(we can start the iteration of an approximate solution
even if the problem is not close
to integrable and also we can obtain estimates on the distance from
the initial data to the solution by the error of the initial approximation).

Methods similar to those described above (using a formal power
series as a jumping off point of an a-posteriori method) were used in
\cite{JorbaLZ00}. One can also mention \cite{CallejaCL13,CCCL2015} which deal with
dissipative systems, although
in the two latter papers the iterative method is not a Newton's method,
but rather a contraction argument (taking advantage of the fact that the
dissipation is very strong).
The method is very well suited for the study of monogenic properties and
Whitney extensions, which we study in Section~\ref{fastlindstedt}.

We expect that the phenomena uncovered here also hold in several other problems.

\subsection{Conformally symplectic systems}
\label{sec:conformallysymplectic}
As mentioned above, there are many physical problems that lead to the study of conformally symplectic
systems and, in particular, to the study of singular series.

One simple but very useful remark for conformally symplectic systems
is that quasi-periodic solutions with enough frequencies satisfy the
so-called \emph{automatic reducibility}: in a neighborhood of
an (approximately) invariant torus, one can find a system of coordinates
in which the linearization becomes (approximately) constant coefficients.
Automatic reducibility happens irrespective of whether the system is close to
integrable or not (see \cite{CallejaCL11}).  Automatic reducibility
allows one to develop a KAM theory (\cite{CallejaCL11}), leading also
to efficient and accurate algorithms.

In \cite{CallejaCL11} one can find a
numerically accessible method to compute
the breakdown threshold extending the arguments of \cite{CallejaL10}.
This criterion for breakdown, based on the study of the growth of
Sobolev norms, is shown to converge to
 the right value of the threshold; indeed, the accuracy of the computed breakdown
 in actual computers is only limited by the available memory, precision and computational time.

A numerical implementation in actual computers was done
in \cite{CallejaC10}, which includes  comparisons with
other methods. Another property of KAM tori in conformally symplectic
systems is that the breakdown of the invariant circles
happens due to a \emph{bundle collapse}
scenario in which hyperbolicity is lost because
the stable bundle becomes close to the tangent bundle,
even if the exponents remain bounded away
(see \cite{CallejaF11} for a numerical implementation and a presentation of
empirical results, including scaling relations for the breakdown).

We also mention that the Greene's criterion for the breakdown of invariant circles has been
extended to conformally symplectic systems and given a partial
justification (see \cite{CCFL14}).

It is known that KAM theory for conformally symplectic systems
requires adjusting parameters (\cite{CallejaCL11}) and moreover (see
\cite{CallejaCL11b}) that Birkhoff invariants near a Lagrangian
invariant torus with a dynamics conjugated to a rotation disappear
(i.e., given a Lagrangian invariant torus with a dynamics conjugated
to a rotation, there is an analytic and symplectic change of variables
defined in a neighborhood of the torus, that conjugates the dynamics
to a rotation in the angles and a multiplication by a constant in the
actions).

The papers \cite{Locatelli, StefanelliL15} develop a Kolmogorov theory
based on transformation theory for quasi-periodic solutions in
quasi-integrable conformally symplectic systems and implement it
numerically. The paper \cite{CellettiC09} develops an a-posteriori KAM
theory for the spin-orbit problem (a two degrees of freedom
conformally symplectic system).

\subsection{Description of the set up}
\label{sec:setup}
We will consider analytic families of mappings or flows with a small parameter $\eps$
and having also internal parameters $\mu$. That is, given
an analytic symplectic manifold $\M$ of dimension $2d$ with exact symplectic form $\Omega$,
we will consider families of
mappings $f_{\mu,\eps}:\M\rightarrow\M$ satisfying:
\beq{familiesdefined}
f_{\mu, \eps}^* \Omega = \lambda(\eps) \Omega\ , \qquad \lambda(0) = 1\,
\eeq
or families of flows $\F_{\mu,\eps}$ such that:
\beq{fl}
L_{\F_{\mu, \eps}} \Omega = \chi(\eps) \Omega\ , \qquad \chi(0) = 0\ ,
\eeq
where $\lambda=\lambda(\eps)$, $\chi=\chi(\eps)$ are the conformal factors for maps and flows, respectively,
the star in \equ{familiesdefined} denotes
the pull-back, $L_{\F_{\mu, \eps}}$ in \equ {fl} is the Lie derivative.
We will refer to $\lambda(\eps)$, $\chi(\eps)$ as the dissipation.

Since we will discuss analyticity, all the parameters will be taken to be complex.
In applications the parameters are often real and then the values of the functions
are real. It will happen that all the calculations
we perform respect the properties that real arguments
of the function lead to real results.

In formula \eqref{familiesdefined}, $\eps$ is a small parameter that controls
the dissipation. The parameters $\mu\in\complex^d$ are some intrinsic parameters of the model
that are called \emph{the drift} in some papers.  Of course, the case when
$\lambda = 1$ (respectively, $\chi = 0$) corresponds to the mapping $f_{\mu,\eps}$ (respectively,
the flow $\F_{\mu,\eps}$) being symplectic.

Note that the interpretation of \eqref{familiesdefined} or \eqref{fl} is that the mapping or
the flow transforms the symplectic form into a multiple (could be a complex multiple for complex $\eps$) of
itself. We assume that the conformal factor is of the form
$$
\lambda(\eps) = 1 + \alpha \eps^a + O(|\eps|^{a+1})\ ,
$$
where $a\in\integer_+$, $\alpha\in\complex\backslash\{0\}$ (in many applications
$\alpha\in\real\backslash\{0\}$, so as to preserve the property that real arguments
lead to real variables).

We will fix $\omega$ within the set of Diophantine vectors (see
Definition~\ref{vectordiophantine} for the standard definition for
maps and Appendix~\ref{sec:flows} for the standard definition for
flows). We will be interested in studying the domain of complex values
of $\eps$ for which we can continue a KAM torus invariant for the
symplectic system.

Of course, we will assume several non-degeneracy conditions
(similar to the twist condition)  to prove the existence of KAM tori.
These non-degeneracy conditions can be verified
by performing some calculations on the approximate solution.

Note, that we will not assume that the Hamiltonian system is
integrable or nearly-integrable, but only that it has a KAM torus of
frequency $\omega$. In particular, the results apply to perturbations
of islands generated by resonances at higher values of the
perturbation (\cite{Duarte1994}, \cite{Duarte1999}).

For simplicity we will describe in detail the case of maps, while flows are discussed in
Appendix~\ref{sec:flows}.

The way we seek invariant tori of mappings is to try to find an
embedding $K_\eps: \torus^d \rightarrow \M$ and a parameter vector
$\mu_\eps\in\complex^d$ in such a way that
\begin{equation}\label{invariance0}
f_{\mu_\eps, \eps}\circ K_\eps = K_\eps \circ T_\omega\ ,
\end{equation}
where $T_\omega$ is the shift map defined  by $\omega$,
$T_\omega(\theta)=\theta+\omega$. We will fix $\omega$ to be Diophantine
(see Section~\ref{sec:diophantine}).
Of course, the equation
\eqref{invariance0} will have to be supplemented by some normalization
conditions, which ensure that the solutions are locally unique. We
refer to \cite{CallejaL09, CallejaL10b, LlaveR90, CallejaF11}
for a method to find invariant curves
that solve the invariance equation \equ{invariance0} numerically.

Our main result, Theorem~\ref{main}, shows that, if there is a
solution of \eqref{invariance0} for $\eps = 0$ (the symplectic case),
which satisfies some mild non-degeneracy conditions, we can find
$K_\eps$ and $\mu_\eps$ defined for a set $\G$ of $\eps$. The
functions $K_\eps$ and the vectors $\mu_\eps$ are analytic in  $\eps$
when $\eps$ ranges in the interior of the sets which
are described in Section~\ref{geometry}.  They also extend
continuously to the boundary of $\G$.
We anticipate that the sets
$\G$ do not include any ball centered at the origin in the complex
plane, even if they contain the origin in their closure.  On the other
hand, the sets $\G$ fail to include a ball centered at the origin by
very little.  As we will see, the domains are obtained by taking from
the ball centered at the origin
a sequence of smaller balls  centered along smooth lines going
through the origin and with radii decreasing faster than any power
of the distance of the center to the origin (see
Section~\ref{geometry} and Figure~\ref{fig:N0}).

Similar analyticity domains appeared in \cite{JorbaLZ00}, where the
authors used a strategy close to ours in considering the domains of
analyticity of resonant tori in near-integrable systems\footnote{The
problem considered in \cite{JorbaLZ00} is actually more singular
than the one considered here, since the twist and other
non-degeneracy constants depend on $\eps$ and they degenerate as
$\eps \to 0$. Nevertheless, they degenerate like a fixed power of
$\eps$ and one can construct approximations to any order.}  (see
also \cite{GentileG05,GGG2006,CostinGGG07} for results based on
resummation of series for the same problem as \cite{JorbaLZ00}).  The paper
\cite{JorbaLZ00} also obtained other geometric results such as the
monodromy of the stable and unstable bundles, which are not present in
other treatments.

We note that the domains we obtain here have several cuts
and that the analyticity domains do not contain sectors centered at
the origin with aperture bigger than $\pi/a$.
Hence, one cannot use only general complex analysis methods (\cite{Hardy, Balser, Sokal})
to deal with the asymptotic series, as these methods do
not guarantee that there is only a function with the same asymptotic expansion
(\cite{PhragmenL08,SaksZ65}). On the other hand, it is a byproduct of our analysis that the expansions
of solutions of the equations using the Lindstedt procedure are indeed unique.
In Section~\ref{sec:whitney} we also discuss $C^\infty$-Whitney properties of the solution.

The argument we present to prove our results has two ingredients:
\begin{itemize}
\item[$(i)$]
An \emph{a-posteriori} KAM theorem  (Theorem~\ref{mainKAM})
for conformally symplectic systems with complex parameters, which shows that if there is an approximate solution of
the invariance equation \eqref{invariance0}, then there is a true solution nearby.

Theorem~\ref{mainKAM} is a very quantitative statement on when a solution is
approximate enough to be the starting point of an iterative algorithm which converges.
The main condition of Theorem~\ref{mainKAM} is that the initial error is small enough. The precise
smallness condition depends mainly on the number theoretic properties of the complex number $\lambda$ -- the conformal
factor -- with respect to the frequency $\omega$. The smallness condition depends also
on the Diophantine properties of $\omega$ and on some non-degeneracy conditions
of the map. The Theorem~\ref{mainKAM} is a small modification of Theorem 20 in
\cite{CallejaCL11}.

\item[$(ii)$]
An algorithm to produce a perturbative series expansion that provides approximate solutions to
all orders in $\eps$, see part A of Theorem~\ref{main}.

This algorithm  is quite a standard procedure, which goes back at least to
\cite{Poincarefrench} and is based
on earlier literature.

In our case, taking advantage of the fact that the maps
are conformally symplectic (and, hence,  automatically  reducible) we can improve
the classical results in several directions:
In  Section~\ref{sec:quadratic} we present a quadratic algorithm to
compute the Lindstedt series, which is much faster and which could replace part A of Theorem~\ref{main}.
Indeed, the quadratic procedure in Section~\ref{sec:quadratic} gives  an alternative
proof of all of Theorem~\ref{main}.
We also note that using the automatic reducibility we can develop these series starting at any invariant torus. This leads to
improvements on the domain of analyticity that are discussed in Section~\ref{sec:improved}
as well as to Whitney regularity in the boundary.
\end{itemize}


Theorem~\ref{main} is proved by showing that an iterative procedure
(which is explicitly described in Algorithm~\ref{alg:step} and which is
a very practical numerical algorithm implemented in \cite{CallejaC10, CallejaF11})
converges if the initial error is small.
We point out that, if we start the iterative procedure not on one parameterization
and a drift, but on an analytic family of parameterizations and
drifts indexed by a variable $\eps$, then the result will also be an analytic family
in $\eps$
(the iterative procedure -- see Algorithm~\ref{alg:step} -- consists in applying algebraic operations,
taking derivatives and
solving cohomology equations with constant coefficients; all these elementary steps
preserve the analytic dependence on parameters).
We also point out that the convergence will be uniform in a domain of $\eps$, if all the non-degeneracy conditions and the initial error
are uniform.

Hence, if we start the iterative procedure
with a function analytic in an open set (respectively, continuous in a closed set)
of $\eps$ (such as that produced by part A in Theorem~\ref{main}), then
we obtain that the limit is analytic (respectively, continuous) in $\eps$ in the domain where the
convergence is uniform.

Since, furthermore, we have local uniqueness of the normalized solutions of the invariance equation
that satisfy a normalization condition,  we are sure that the
solutions obtained in two open sets agree on the overlap, and hence we can use arguments based on
analytic continuation.

For simplicity, we will present the proof for maps and we provide
the changes needed to obtain the result for flows in
Appendix~\ref{sec:flows}.\\

This paper is organized as follows.
In Section~\ref{sec:definitions}, we collect some of the standard definitions;
in Section~\ref{sec:diophantine} we provide some definitions on Diophantine properties, while
in Section~\ref{motivationgeometry}
we present the geometry of several sets where the conformal factors
satisfy Diophantine conditions.
In Section~\ref{sec:inveq} we present the invariance equation
and the normalization conditions we impose to obtain local uniqueness.

In Section~\ref{sec:main} we state Theorem~\ref{main},
the main result of this paper.

In Section~\ref{sec:KAM} we present Theorem~\ref{mainKAM}, which is
the \emph{a-posteriori} theorem alluded in $(i)$ above. Such a
theorem is the first ingredient
of the main result.  The proof of Theorem~\ref{mainKAM} is very
similar to the proof of Theorem 20 in \cite{CallejaCL11} and we just
outline the (rather minor) differences.  The existence of the
Lindstedt series is discussed in Section~\ref{sec:Lindstedt}.  In
contrast to the procedure using Lindstedt series (\cite{CallejaCL11}),
the present treatment allows one that the conformal factor depends on
$\eps$.  This provides the proof of the first part of
Theorem~\ref{main}, while the second part is proved in
Section~\ref{sec:partB}.  In Section~\ref{sec:geometricset} we
study some geometric properties of the set.
In Section~\ref{sec:optimal} we include a conjecture on the
optimality of the results described in this work. In Section~\ref{fastlindstedt} we present
several results: we show that the Lindstedt series expansions can be obtained around
any point; we present a relation with the theory of monogenic functions and establish
that the embedding function and the drift are
Whitney differentiable in the domain; we provide a
quadratic method for the computation of
the Lindstedt series, leading also to Part B of Theorem~\ref{main}; we present a
discussion on the improvement of the domain of analyticity.
The extension to the case of flows is provided in
Appendix~\ref{sec:flows}.

\begin{remark}
In many estimates in this paper, we can obtain that the domains
satisfy upper bounds less or equal than $C_N |\ep|^N$ for all $N$ and
for some constant $C_N$.  Clearly, since the bounds are true for all
$N$, one can get an upper bound less or equal than $\inf_N C_N |\ep|^N
\equiv \Gamma(\ep)$. The function $\Gamma$ will, of course, go to zero
faster than any power.

If one had explicit forms for $C_N$, it would be possible to obtain
explicit forms for $\Gamma$. In many problems similar to the ones we
are considering, one obtains \emph{factorial bounds} like $C_N = A
N^{aN}$ for some constants $a$ and $A$. In such a case, one obtains
that $\Gamma(\ep) = \exp(-C'|\eps|^{-1/a})$ for some constant $C'$. We
conjecture that the sizes of the balls to be excluded in the present paper satisfy the
factorial bounds.
\end{remark}

\section{Some definitions}\label{sec:definitions}

In this Section, we collect some definitions on
spaces, Diophantine properties and we set the notation.
Most of the definitions in this section are standard.
One non-standard definition that will play an important role is
the Diophantine property of complex numbers with respect to
a Diophantine frequency (see Definition~\ref{lambdadiophantine}).

Given $\rho > 0$ we define the complex extension of the $d$--dimensional torus as
\[
\torus^d_\rho = \{ z \in \complex^d/\integer^d \, :\ {\rm Re}(z_j)\in\torus\ ,\quad |\Im(z_j)| \leq \rho\ ,\quad j=1,...,d\}\ .
\]

We define $\A_\rho$ to be the vector space of functions analytic in Int($\torus^d_\rho$) and
which extend continuously to the boundary of $\torus^d_\rho$.

We endow $\A_\rho$ with the supremum norm
\begin{equation} \label{norm}
|| f ||_\rho = \sup_{\theta \in \torus^d_\rho} | f(\theta)|\ .
\end{equation}
The norm \eqref{norm} makes the space $\A_\rho$ into a Banach
space, indeed a Banach algebra under multiplication.
An important closed subspace of $\A_\rho$ is
the set of functions which take real values for real arguments.

Analogous definitions are made for  analytic functions on $\torus^d_\rho$ taking values
in vectors or in matrices and, of course, the multiplicative properties
of norms of  vectors and matrices lift to
supremum norms of functions taking values in vectors or matrices.

We also note that we can define analytic functions of $\eps \in
\complex$ taking values in $\A_\rho$.
Following standard definitions, we say that an $\A_\rho$-valued
function $K$ is analytic in an open domain $\D\subset\complex$ when, for any $\eps_0\in\D$, we can represent for
all $|\eps - \eps_0|$ sufficiently small, $K_\eps = \sum_{n=0}^\infty
K_n (\eps - \eps_0)^n$, where the convergence of the infinite sum
happens in $\A_\rho$.  Note that, with this definition, it is clear
that an $\A_\rho$-valued analytic function is also an $\A_{\rho'}$-valued
analytic function for $\rho' \le \rho$.

It is remarkable that there
are many other definitions that are apparently weaker than the
definition above, but which turn out to be equivalent (see
\cite[Chapter III]{HilleP48}). In our case, the $K$ will be analytic
functions from $\G$ to some $\A_\rho$. In principle, the domain $\G$
could depend on $\rho$, but we will not include it in the notation.

We note that we are only obtaining lower bounds of the domain of
analyticity. The method of proof leads to estimates
for different $\rho$'s, which are proved by selecting a different
constant. Hence, the statements that are valid for all Diophantine constants
in a range are also valid for all the values of $\rho$ in a range.

We recall the classical Cauchy inequalities for derivatives and for
Fourier coefficients (see, e.g., \cite{SaksZ65,Russmann75}).

\begin{proposition}\label{prop:Cauchy}
For any $ 0 < \delta < \rho$ and for any function
$K \in \A_\rho$, denoting by $D^j$ the $j$--th derivative, we have:
\beqano
|| D^j K ||_{\rho - \delta} &\le& C_j \delta^{-j}|| K||_\rho\ , \nonumber\\
| \widehat K_k | &\le& || K ||_{\rho}\ e^{- 2 \pi |k| \rho }\ ,
\eeqano
for some constants $C_j$, where $\widehat K_k$ denotes the $k$--th Fourier coefficient of $K$ and $|k|=|k_1|+...+|k_d|$.
\end{proposition}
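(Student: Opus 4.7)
The plan is to prove both inequalities by standard applications of the Cauchy integral formula, adapted to the tube domain $\torus^d_\rho$. Throughout, I identify functions in $\A_\rho$ with their lifts to the strip $\{z\in\complex^d : |\Im z_j|\le\rho\}$, where they are $\integer^d$-periodic in the real part, analytic in the interior, and continuous up to the boundary.

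For the derivative bound, I would fix $\theta_0 \in \torus^d_{\rho-\delta}$ and observe that the closed polydisc of polyradius $(\delta,\dots,\delta)$ centered at $\theta_0$ lies inside $\torus^d_\rho$. Applying the $d$-dimensional Cauchy integral formula on this polydisc and differentiating under the integral sign gives, for each multi-index $\alpha$,
\beqno
|\partial^\alpha K(\theta_0)| \;\le\; \alpha!\,\delta^{-|\alpha|}\,\|K\|_\rho.
\eeqno
Since the $j$-th total derivative $D^j K$, viewed as a symmetric $j$-multilinear map, has operator norm controlled by the maximum of $|\partial^\alpha K|$ over $|\alpha|=j$ up to a combinatorial factor depending only on $j$ and $d$, taking the supremum over $\theta_0 \in \torus^d_{\rho-\delta}$ yields $\|D^j K\|_{\rho-\delta}\le C_j\,\delta^{-j}\|K\|_\rho$ for a suitable constant $C_j$.

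For the Fourier coefficient bound, I would start from the definition
\beqno
\widehat{K}_k \;=\; \int_{[0,1]^d} K(\theta)\,e^{-2\pi i k\cdot\theta}\,d\theta
\eeqno
and shift the contour of integration component by component. Choose $\sigma\in\real^d$ by $\sigma_j=-\rho\,\mathrm{sign}(k_j)$ (with $\mathrm{sign}(0)=0$), so that $k\cdot\sigma=-\rho|k|$. Since the integrand is analytic in each variable in the strip of width $\rho$ and the vertical contributions cancel by periodicity in the real direction, Cauchy's theorem lets me replace $\theta$ by $\theta+i\sigma$:
\beqno
\widehat{K}_k \;=\; \int_{[0,1]^d} K(\theta+i\sigma)\,e^{-2\pi i k\cdot(\theta+i\sigma)}\,d\theta.
\eeqno
Bounding $|K(\theta+i\sigma)|\le\|K\|_\rho$ and $|e^{-2\pi i k\cdot(\theta+i\sigma)}|=e^{2\pi k\cdot\sigma}=e^{-2\pi|k|\rho}$ and integrating over the unit cube gives the claimed inequality.

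The only point requiring care — which I would not count as a serious obstacle — is that the contour shift lands on the boundary of $\torus^d_\rho$, where we only have continuity rather than analyticity. This is handled by first performing the argument with $|\sigma_j|=\rho-\eta$ (well inside the domain of analyticity), which gives $|\widehat{K}_k|\le\|K\|_{\rho-\eta}\,e^{-2\pi|k|(\rho-\eta)}\le\|K\|_\rho\,e^{-2\pi|k|(\rho-\eta)}$, and then letting $\eta\to 0^+$ so that the exponential bound converges to the desired one.
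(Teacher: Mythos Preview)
Your proof is correct and follows the standard argument. The paper does not actually prove this proposition: it is stated as a recalled classical result with a citation to \cite{SaksZ65,Russmann75}, so there is no ``paper's own proof'' to compare against. What you wrote is essentially the textbook derivation one finds in those references, including the appropriate care with the limiting argument at the boundary of the strip.
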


\subsection{Diophantine properties}\label{sec:diophantine}
In this Section we collect some definitions concerning Diophantine properties, which
will be needed to bound the small divisors appearing in the solution of the invariance equation.
The only non standard definition is Definition~\ref{lambdadiophantine}.

\begin{definition}\label{vectordiophantine}
Let $\omega \in \real^d$, $\tau \in \real_+$.
We define the quantity $\nu(\omega;\tau)$ as
\begin{equation}\label{divisors_simple}
\nu(\omega;\tau) \equiv \sup_ {k \in \integer^{d} \setminus \{0\}}
|e^{2\pi i k \cdot\omega} -1 |^{-1} |k|^{-\tau}\ .
\end{equation}
In the sup above we allow $\infty$. Also, if
$|e^{2\pi i k \cdot\omega} -1 | = 0 $, we set
$\nu(\omega;\tau) = \infty. $

We say that  $\omega\in \real^d$ is Diophantine of
class $\tau$ and constant $\nu(\omega; \tau)$, whenever
\beq{nubis}
\nu(\omega; \tau) < \infty\ .
\eeq
We denote by $\D_d(\nu, \tau)$ the set of Diophantine vectors in $\real^d$ of class $\tau$ and
constant $\nu$.
\end{definition}

Of course, if $\omega$ is Diophantine of class $\tau$, it will be
Diophantine of class $\psi$ for all $\psi \ge \tau$.
For the purposes of this paper, the value of the constant $\nu$ is more important than the
exponent $\tau$, so we will consider one fixed exponent
in the main theorems.

\begin{definition} \label{lambdadiophantine}
Let $\lambda \in \complex$,  $\omega \in \real^d$, $\tau \in \real_+$.
We define the quantity $\nu(\lambda; \omega, \tau)$ as
\begin{equation}\label{divisors_lambda}
\nu(\lambda; \omega, \tau) \equiv
\sup_
{k \in \integer^{d} \setminus \{0\}}
|e^{2\pi i k \cdot\omega} -\lambda |^{-1} |k|^{-\tau}\ .
\end{equation}
Again, we allow $\infty$ in the supremum above
 and set $\nu(\lambda; \omega, \tau)=\infty$, if
$e^{2\pi i k \cdot\omega}=\lambda$ for some $k\in\integer^d\backslash\{0\}$.
\end{definition}

\begin{remark}\label{rem:semicontinuous}
We note that, for a fixed $\omega$, the function $\nu=\nu(\lambda;\omega,\tau)$ is a lower
semi-continuous function of $\lambda$, since it is the supremum of
continuous functions.
\end{remark}

\begin{remark}\label{rem:tau}
If $\nu(\lambda; \omega, \tau ) <  \infty$, then
for all $\psi \ge \tau$, we have  $\nu(\lambda; \omega, \psi) <  \infty$.

Note that the definition of $\nu(\lambda;\omega, \tau)$ does not require that
$\omega$ is Diophantine of exponent $\tau$.
\end{remark}

\begin{remark}\label{rem:lambda}
No matter what $\omega$ is, if $|\lambda| \ne 1$, then from the inequality
$$
|e^{2\pi i k \cdot\omega} -\lambda | |k|^{\tau}\geq |e^{2\pi i k \cdot\omega} -\lambda |
\geq \Big| |e^{2\pi i k \cdot\omega}| -|\lambda| \Big|=\Big|1-|\lambda| \Big|\ ,
$$
one obtains that
$$
\nu(\lambda;\omega, \tau)\leq |1-|\lambda||^{-1} < \infty
$$
for any $\tau$.
\end{remark}

As a consequence of the above remark, the only case that needs to be
studied in detail is when $|\lambda| = 1$, in which case it could be
that $\nu (\lambda; \omega, \tau) = \infty$ (e.g., if $\lambda =
e^{2\pi ik \cdot \omega} $ for some $k \in \integer^d \setminus \{0\}$
or if $\omega$ is a Liouville number).

For $\tau>d-1$ the set of real vectors which are not Diophantine of class
$\tau$ has zero Lebesgue measure in $\real^d$.  That is, the union
over all $\nu>0$ of the sets of Diophantine vectors of class $\tau$
satisfying \equ{nubis} with constant $\nu$, has full Lebesgue measure
in $\real^d$. For any $\omega \in \real^d$, if
$\tau > d-1$, the set of $\lambda$ for which $\nu(\lambda;\omega,
\tau) < \infty$ is of full Lebesgue measure on the unit circle (see
\cite{Schmidt80}). When we consider complex $\omega\in\complex^d$,
the set of vectors that are not Diophantine of class $\tau$ has zero
measure when $2\tau>d-1$. This shows that complex Diophantine is easier
than Diophantine over the reals.

We will consider fixed the analytic function $\lambda(\eps)$,
which gives the conformal factor as a function of the
perturbing parameter $\eps$. In particular, we will assume that $\lambda(\eps)$ is analytic in a
neighborhood of zero and that $\lambda(0) = 1$. Hence,
we assume that $\lambda=\lambda(\eps)$ satisfies:

\vskip.1in

{\bf H$\bf{\lambda}\qquad\qquad\qquad\qquad\qquad$} $\lambda(\eps)-1 = \alpha \eps^a + O(|\eps|^{a+1})$

\vskip.1in

\noindent
for some $a>0$ integer, $\alpha\in\complex\backslash\{0\}$.

This will be one of the assumptions of our main result stated in Theorem~\ref{main}. Note that, since we are
considering analytic functions, the alternative to the existence of $\alpha$ and $a$ in {\bf H$\bf{\lambda}$} is
that $\lambda(\eps)\equiv 1$, that is that the maps $f_{\mu, \eps}$
are symplectic.
Hence, given the analyticity assumption, the
only content of {\bf H$\lambda$} is that  the perturbations indeed change
the symplectic character (as well as setting the notation for $a,\alpha$,
which will play a role in the the quantitative statements).

We consider  the function $\lambda(\eps)$ as given in
{\bf H$\lambda$.}

We
define the set $\G=\G(A; \omega,\tau, N)$ for some $A>0$, $N\in\integer_+$,
$\omega \in \real^d$ as
\begin{equation}\label{goodset}
\G(A; \omega,\tau, N)  = \{ \eps \in \complex : \quad
\nu(\lambda(\eps); \omega, \tau) |\lambda(\eps) - 1|^{N+1} \le A \}\ .
\end{equation}

We also introduce the notation
\beq{Gr0}
\G_{r_0}(A;\omega,\tau,N)  = \G \cap \{\eps\in\complex:\  |\eps| \le r_0\}\ ;
\eeq
the set $\G_{r_0}$ will typically be used for sufficiently small $r_0$.

Notice that $\G(A; \omega,\tau, N)$ is
the preimage under the function $\lambda$ of the set
\begin{equation}\label{Lambdadefined}
\Lambda(A; \omega, \tau, N)=
\{ \lambda \in  \complex  : \quad
\nu(\lambda; \omega, \tau) |\lambda - 1|^{N+1} \le A \}\ .
\end{equation}

The motivation for introducing these sets
will be discussed in Section~\ref{motivationgeometry}.
We anticipate that the set $\G$ is the set
where the Diophantine constants of $\lambda(\eps)$ are not
too bad, so that a good approximation (up to a high power of
$\eps$) can be taken as the initial condition for an iterative
procedure that converges. More detailed motivations will be presented
in Section~\ref{motivationgeometry};
some depiction of the sets $\G_{r_0}$, $\A$ is presented in Figure~\ref{fig:N0}.

Since, for a fixed $\omega$, the function $\nu$ is lower semi-continuous, we
note that $\G$ is a closed set. The interior of this set is non-empty and
indeed, the set $\G$ is the closure of its interior.
We will be studying functions defined in $\G$ taking values
either in the complex or in some Banach space of
functions. We will consider functions on $\G$ which are
continuous in $\G$ and analytic in the interior of $\G$.
We will sometimes refer to these functions as analytic in $\G$.

For obvious typographical reasons, since  many of the arguments of
$\G, \Lambda$ will be fixed in the discussion, we will omit them:
if $\omega$, $\tau$, $N$, $\lambda$ are fixed in an argument, we will
just write $\G(A)$ or $\G_{r_0}(A)$. Of course, one could also add
$\rho$ to the dependence of $\G$, but as remarked above, this is not needed.

\subsection{Motivation for the role of the Diophantine constants.}
\label{motivationgeometry}

In this paper, we will fix $\omega$ Diophantine and search for
solutions of the functional equation \eqref{invariance0}, capturing
the geometric idea that we have a parameterization of an invariant torus.

The solutions of the invariance equation~\eqref{invariance0} are
obtained by an iterative method, whose step involves solving two
cohomology equations of the type considered in
Section~\ref{sec:cohomology} below (as well as algebraic and calculus operations).

One of the cohomology equations in the iterative step will involve
small divisors of the form $|e^{2\pi ik\cdot\omega}-1|^{-1}$ appearing
in \eqref{divisors_simple} and the other will involve small divisors
of the form $|e^{2\pi ik\cdot\omega}-\lambda|^{-1}$ appearing in
\eqref{divisors_lambda}.  Hence, the quantitative figure of merit of
the step will be the constant $\nu(\omega; \tau) \nu(\lambda; \omega,
\tilde \tau)$ for some $\tau,\tilde\tau\in\real_+$ (as well as other
factors, that we will consider fixed).

Since the two cohomological equations that we are going to solve in
Section~\ref{sec:KAM} are different, there is no reason
to impose that the exponents are equal, but, for the sake of
simplicity, we will not optimize these choices and, from now on, we
will just consider the case $\tilde\tau=\tau$.

Based on the results presented in \cite{CallejaCL11b}, we will show
that, if we start with an approximate solution of \equ{invariance0}
and if the initial error is small enough compared with the figure of
merit (the precise relation given in assumption {\bf H5} of
Theorem~\ref{mainKAM}), then the iterative procedure can be repeated
infinitely many times and it converges to a true solution. Taking as
initial condition the Lindstedt series, we will obtain convergence in
the sets claimed in Theorem~\ref{main}.

If we fix the frequency $\omega$ and the exponent $\tau$, the quality
factor for the step, namely $\nu(\omega; \tau) \nu(\lambda;
\omega,\tau)$, is a function of $\lambda$ alone. It will be important
to study the places where this function is not too big and identify
complex domains in the $\eps$-plane where it is
bounded uniformly. The values of $\lambda$
which lead to a quality factor in the iterative step, which is small enough with respect to the
initial error, are those for which the procedure converges.

As indicated in hypothesis {\bf H}$\lambda$, when we consider problems
depending on the complex parameter $\eps$, the quantity
$\lambda(\eps)-1$ will be approximately $\alpha \eps^a$. The
perturbation expansion to order $N$ will produce approximate solutions
that satisfy the equation to order $\eps^{N+1}$ (see \equ{formalpower}
in Theorem~\ref{main}). Hence, the sets $\G$ in \eqref{goodset} are
the sets where we can start the iterative procedure with an
approximate solution given by a perturbation expansion, and obtain
that the iteration process converges.  The sets $\G$ in
\eqref{goodset} will thus be the sets where we can establish that
there is a solution starting the iterative procedure from the approximate solution given by the
expansion. Of course, it is possible that the set of $\eps$ for which
$K_\eps$, $\mu_\eps$ are analytic
is larger than $\G$.  The set $\G$ is the place
where one particular method works, but
we could use other methods.  Indeed in Section~\ref{sec:improved}, we will find some way to
extend the domain $\G$.

We will undertake the study of the geometry of the sets $\G$ and $\Lambda$ in Section~\ref{geometry}
and in Section~\ref{sec:geometricset}.
We anticipate that, of course, one can obtain uniform bounds
for $\nu(\lambda;\omega, \tau)$ in any domain which is
away from the unit circle. The function $\nu(\lambda;\omega, \tau)$
is infinite in a dense set on the unit circle, but can be bounded
in domains that include the unit circle in the boundary,
indeed in a full measure set in the unit disk.
The geometry of these domains is rather interesting
and will be discussed in the next Section.

\subsubsection{Geometry of the level sets of  $\nu(\lambda;\omega,\tau)$ }
\label{geometry}

In this Section we try to understand the sets of points
described in Section~\ref{sec:diophantine}. More properties will be described in Section~\ref{sec:geometricset}.

The complement of the set $\Lambda$, say $\Lambda^c$, is the {\sl bad} set
where the bounds we use do not allow us to claim the
existence of tori, but we claim that if we take as initial condition of
the iterative procedure the Lindstedt polynomial of order $N$, then
the measure of $\Lambda^c$ is small for $N$ large enough.

The set $\Lambda^c$ is the union of the open  sets
\[
\begin{split}
\R_k &= \{ \lambda \in \complex \, :\
|e^{2 \pi i k\cdot \omega} - \lambda|^{-1}  >  A |k|^{\tau} | \lambda -1|^{-(N+1)}\}\\
&= \{ \lambda \in \complex \, :\
|e^{2 \pi i k\cdot \omega} - \lambda|  <  A^{-1} |k|^{-\tau}
| \lambda -1|^{N+1}\}\ .
\end{split}
\]

Since $\lambda$ appears on both sides of the inequality defining
$\R_k$, this is not easy to handle. On the other hand, we note
that if we consider the annulus $ \rho < |\lambda - 1| < 2 \rho$,
we see that the intersection of the set $\R_k$ with the annulus
 is contained in the ball
 $\B_k$
with center at $e^{2 \pi i k\cdot \omega}$ and radius
$C A^{-1} \rho^{N+1} |k|^{-\tau}$, for some constant $C>0$.

For each $k\in\integer^d\setminus \{0\}$, the area $\S_k$ of the ball is equal to
$$
\S_k=\pi\ C^2\ A^{-2}\ \rho^{2(N+1)}\ |k|^{-2\tau}\ .
$$
Notice that the area of the union of the balls $\B_k$ is finite, provided
$$
\sum_{k\in\integer^d\backslash\{0\}} |k|^{-2\tau}<\infty\ .
$$
Indeed, we see that as we consider $\rho$ small, the excluded
area decreases faster than $\rho^{2(N+1)}$, which is much smaller
than the area of the annulus, which is proportional to $\rho^2$.
Hence, we can say that $\Lambda$ has a point of density
at $1$ and that the excluded balls in the $\lambda$-space
decrease very fast as we approach $1$.
More detailed estimates will appear later.

The above remarks can be translated to the $\eps$ plane under the assumption
{\bf H$\lambda$}.

\begin{figure}[h]
\centering
\hglue0.2cm
\includegraphics[width=7truecm,height=10truecm]{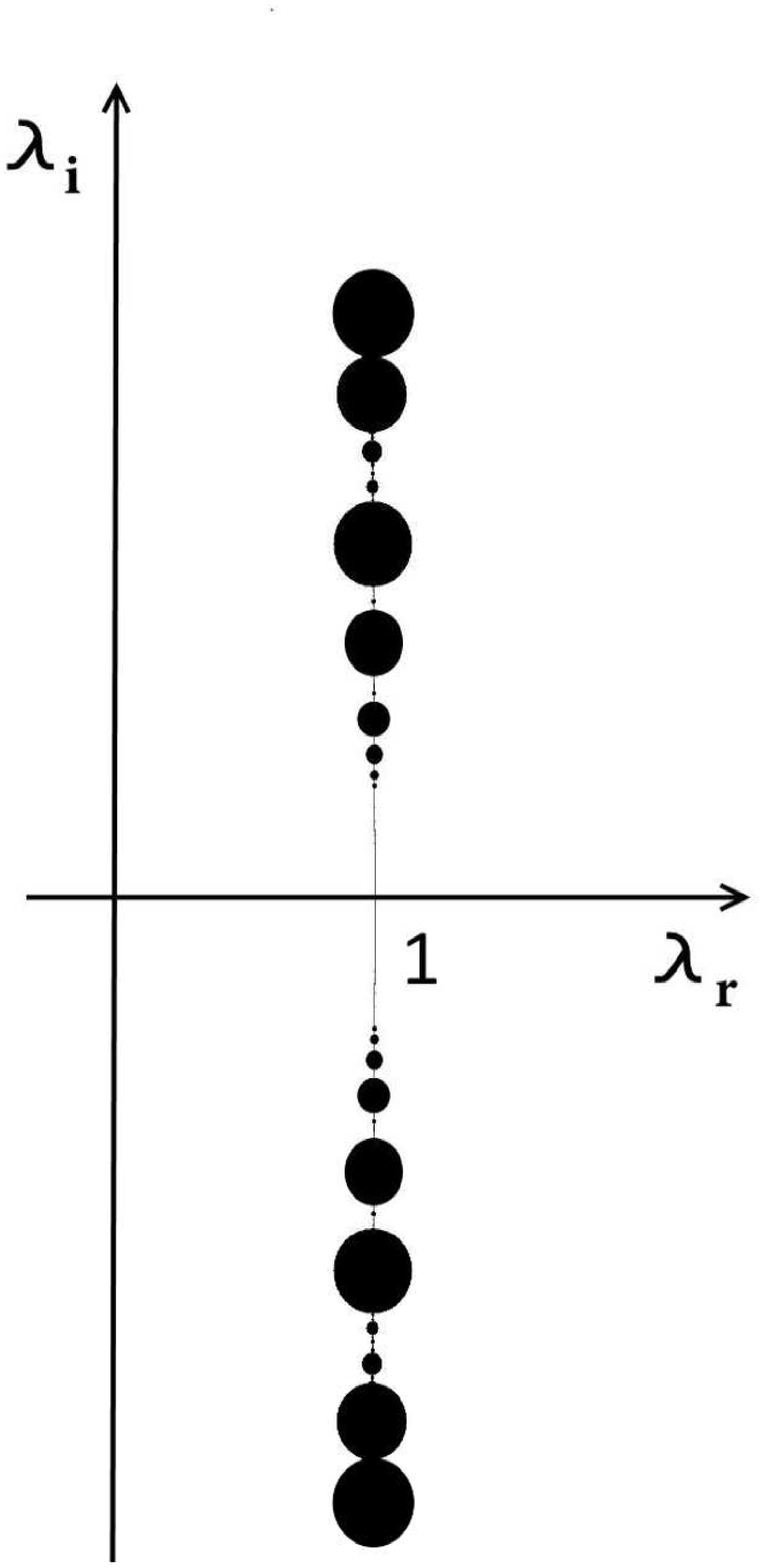}
\hglue1cm
\includegraphics[width=6truecm,height=10truecm]{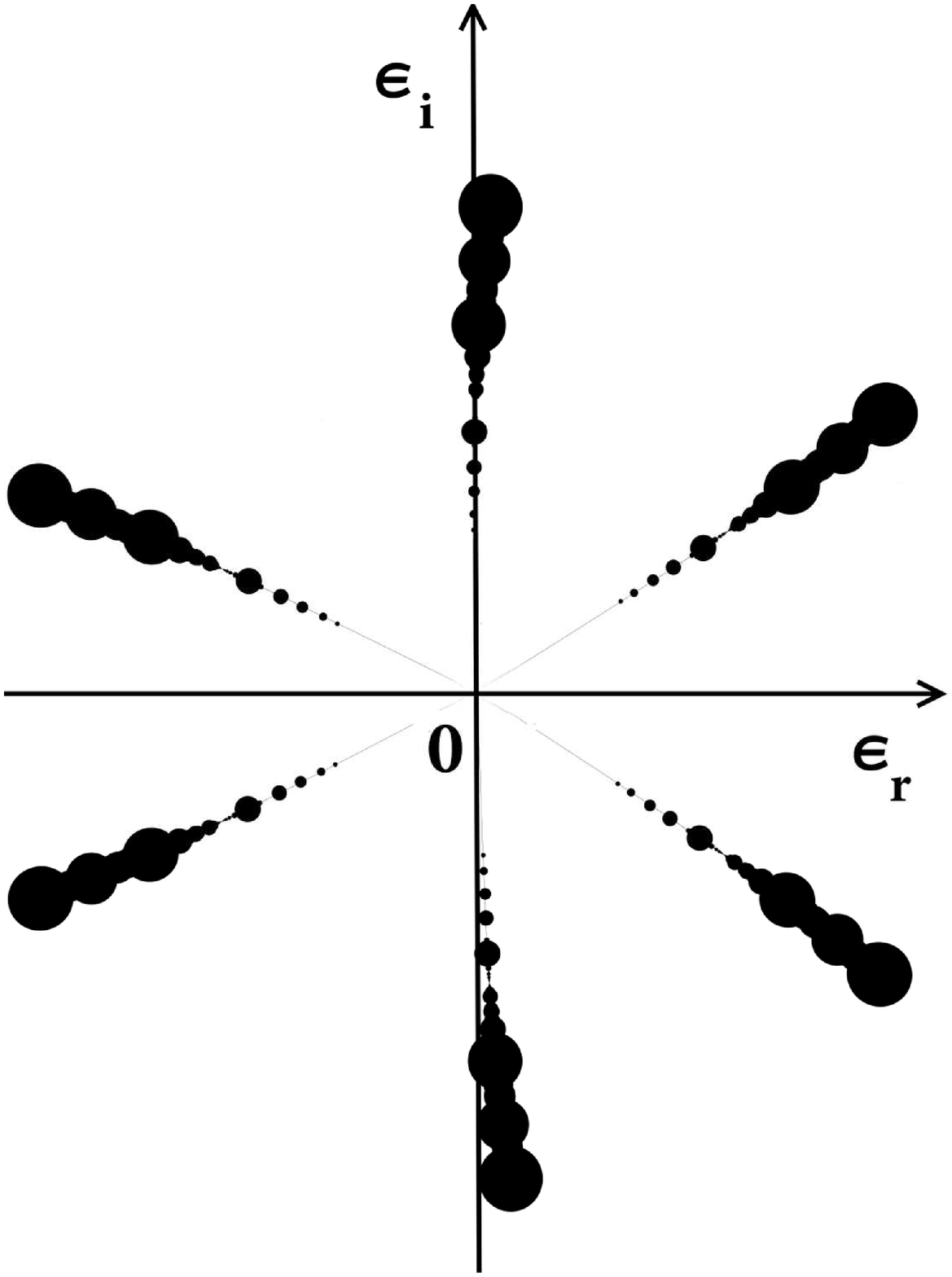}
\caption{A representation of the sets $\Lambda$, $\G$ introduced in
\equ{Lambdadefined}, \equ{goodset}; they are given by the region not
covered by the black circles. The domains are obtained for $d=1$,
$\tau=1$, $a=3$.  The radii of the balls have been rescaled for
graphical reasons.  Left: in black the complement in the complex
plane of the set $\Lambda$ as in \equ{Lambdadefined}; the centers of
the balls lie approximately on the unit circle, which at high
magnification resembles a vertical line.  Right: in black the
complement of the set $\G$ as in \equ{goodset}: the centers of
the balls are at
$|e^{2\pi ik\cdot\omega}-1|^{1\over a}$.}
\label{fig:N0}
\end{figure}

Figure~\ref{fig:N0} provides a representation of the circles which
must be excluded from the definition of the sets introduced in
\equ{goodset} and \equ{Lambdadefined}.  We remark that taking a higher
exponent $N$ in \equ{goodset}, \equ{Lambdadefined} does not alter
significantly the geometry of the domains, it just makes the radius
of the circles decrease faster. This fast convergence  to zero of
the radii
makes it impossible to represent the excluded regions  in a quantitatively
correct way. All the circles except for a few of them will be smaller
than a pixel!  Nevertheless, we conjecture that these
excluded circles are
there, see Section~\ref{sec:optimal}.

\subsubsection{Estimates of cohomology equations}
\label{sec:cohomology}
In this Section, we will present (rather elementary)
estimates on the solutions $\varphi:\torus^d\rightarrow\complex$ of twisted cohomology equations of the form:
\begin{equation} \label{twisted_cohomology}
\lambda \varphi(\theta) - \varphi(\theta + \omega) = \eta(\theta)\ ,
\end{equation}
where the function $\eta: \torus^d \rightarrow \complex$, the parameter
$\lambda \in \complex$ and the frequency vector $\omega \in \real^d$ are given.

We will assume that $\omega$ is Diophantine (see Definition~\ref{vectordiophantine})
and that $\lambda$ satisfies Definition~\ref{lambdadiophantine} with $\nu(\lambda;\omega,\tau)<\infty$.
We want to show that there exist solutions of
\eqref{twisted_cohomology} and that we obtain
quantitative estimates on the size of these solutions in
terms of the quantitative estimates of the Diophantine properties
of $\lambda$.
The estimates that we will obtain will be \emph{tame} estimates
in the sense of Nash-Moser implicit function theorems.

For subsequent applications in this paper, it will be quite important
that the estimates that we obtain about $\varphi$ are rather explicit
on $\nu(\lambda; \omega,\tau)$. Hence, they will hold uniformly in sets
of $\lambda$ for which $\nu(\lambda; \omega,\tau)$ is uniformly bounded. The
geometry of these sets was described explicitly in Section~\ref{geometry}.

Since we will be interested in the limit $\lambda \approx 1$,
it will be important that we can consider $\lambda$ ranging on
domains which include $1$ as a limit point. Hence, we will insert
in our hypotheses that $\omega$ is Diophantine to avoid
empty statements. The following Lemma is a standard result in KAM theory; the dependence
of the estimates on the exponents is not optimized as it is
done for $\lambda$ real according to \cite{Russmann75,Russmann76b}.
This will lead to slightly different estimates in {\bf H5} of Theorem~\ref{mainKAM}
when compared to those appearing in Theorem 20 of \cite{CallejaCL11}; a more detailed
comparison will be given later in Section~\ref{sec:KAM}.

\begin{lemma}\label{cohomology}
Let $\omega \in \D_d(\nu,\tau)$, $\lambda \in \complex$.
Assume that $\eta \in \A_\rho$, $\rho>0$, is such that $\int_{\torus^d}\eta(\theta) d\theta = 0$.
Then, we can find a unique $\varphi \in L^2(\torus^d)$ solving \eqref{twisted_cohomology},
that also satisfies
\[
\int_{\torus^d}\varphi(\theta) d\theta  = 0\ .
\]
Moreover, for any $0 < \delta < \rho$, we have $\varphi \in \A_{\rho -\delta}$, and
\begin{equation} \label{tame}
|| \varphi||_{\rho - \delta} \le C(\tau,d)\ \nu(\lambda; \omega,\tau)\ \delta^{-\tau-d}\
||\eta||_{\rho}
\end{equation}
for a suitable constant $C=C(\tau,d)$.
\end{lemma}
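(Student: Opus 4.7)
The plan is to carry out a standard Fourier-series argument, adapted from the $\lambda=1$ case to the twisted case, tracking carefully the dependence on $\nu(\lambda;\omega,\tau)$. Expand both sides of \eqref{twisted_cohomology} in Fourier series, $\eta(\theta)=\sum_k \widehat\eta_k e^{2\pi i k\cdot\theta}$ and $\varphi(\theta)=\sum_k \widehat\varphi_k e^{2\pi i k\cdot\theta}$. Substituting and matching coefficients, the equation becomes the sequence of scalar equations
\[
(\lambda - e^{2\pi i k\cdot\omega})\,\widehat\varphi_k = \widehat\eta_k, \qquad k\in\integer^d.
\]
For $k=0$ the hypothesis $\widehat\eta_0=0$ is consistent with the normalization $\widehat\varphi_0=0$ (it forces it when $\lambda\neq 1$, and we impose it when $\lambda=1$); this is what gives both existence and uniqueness of the zero-average solution in $L^2$.

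For $k\neq 0$, I would set
\[
\widehat\varphi_k \;=\; \frac{\widehat\eta_k}{\lambda - e^{2\pi i k\cdot\omega}}
\]
and use Definition~\ref{lambdadiophantine} directly to bound $|\lambda-e^{2\pi i k\cdot\omega}|^{-1}\leq \nu(\lambda;\omega,\tau)\,|k|^{\tau}$. Combining this with the Cauchy bound $|\widehat\eta_k|\leq \|\eta\|_\rho e^{-2\pi |k|\rho}$ from Proposition~\ref{prop:Cauchy} yields
\[
|\widehat\varphi_k|\;\leq\;\nu(\lambda;\omega,\tau)\,|k|^\tau\,\|\eta\|_\rho\, e^{-2\pi |k|\rho}.
\]

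To pass to the $\A_{\rho-\delta}$ norm, I would majorize the candidate $\varphi$ by its Fourier series: $\|\varphi\|_{\rho-\delta}\leq\sum_{k\neq 0}|\widehat\varphi_k|e^{2\pi|k|(\rho-\delta)}$, which gives
\[
\|\varphi\|_{\rho-\delta}\;\leq\; \nu(\lambda;\omega,\tau)\,\|\eta\|_\rho \sum_{k\in\integer^d\setminus\{0\}} |k|^\tau e^{-2\pi|k|\delta}.
\]
The remaining step is the elementary but slightly technical bound on the tail sum. I would use the standard comparison with an integral (or a direct $j$-th shell estimate $\#\{k:|k|=j\}\leq C_d j^{d-1}$ and $\sum_{j\geq 1}j^{\tau+d-1}e^{-2\pi j\delta}\leq C(\tau,d)\delta^{-\tau-d}$) to obtain
\[
\sum_{k\neq 0}|k|^\tau e^{-2\pi|k|\delta}\;\leq\; C(\tau,d)\,\delta^{-\tau-d},
\]
which produces the claimed estimate \eqref{tame}. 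The one-line verification that $\varphi$ defined by the series indeed solves \eqref{twisted_cohomology} in $\A_{\rho-\delta}$ follows from uniform convergence on $\torus^d_{\rho-\delta}$.

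The only genuinely substantive point is that the Diophantine constant in the hypothesis on $\omega$ plays no explicit role in the proof of this particular lemma: it is used only to ensure that the set of $\lambda$ for which $\nu(\lambda;\omega,\tau)<\infty$ is nonempty (hence avoiding an empty statement, as noted in the text). I do not expect any real obstacle: the main care is simply to keep the constant $C(\tau,d)$ depending only on $\tau$ and $d$, not on $\lambda$, so that the estimate holds uniformly on any level set of $\nu(\lambda;\omega,\tau)$—this uniformity is exactly what later arguments will exploit when $\lambda$ ranges over the set $\Lambda(A;\omega,\tau,N)$.
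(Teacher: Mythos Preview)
Your proposal is correct and follows essentially the same approach as the paper: Fourier expansion, solving mode-by-mode via $\widehat\varphi_k=(\lambda-e^{2\pi i k\cdot\omega})^{-1}\widehat\eta_k$, applying the Diophantine bound from Definition~\ref{lambdadiophantine} together with the Cauchy estimate on $\widehat\eta_k$, and reducing to the shell sum $\sum_{j\ge1} j^{\tau+d-1}e^{-2\pi j\delta}\le C(\tau,d)\delta^{-\tau-d}$. Your treatment of the $k=0$ mode and your remark that the Diophantine hypothesis on $\omega$ serves only to avoid an empty statement are both accurate and match the paper's intent.
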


Note that a particular case of Lemma~\ref{cohomology} is
when $\lambda = 1$, which reduces to the most standard cohomology equations of
KAM theory.

\begin{proof}
The proof we present here is based on the most elementary (but possibly not optimal in the exponent for $\delta$) argument
(see Remark~\ref{rem:exponent} below).

We expand $\eta$ in Fourier series as $\eta(\theta) = \sum_{k \in \integer^{d} \setminus \{0\}}
\hat \eta_k e^{2 \pi i k \cdot \theta}$. We obtain that
\eqref{twisted_cohomology} is equivalent to having that for
all $k \in \integer^{d} \setminus \{0\}$:
\[
\lambda \hat \varphi_k - e^{2 \pi i k \cdot \omega} \hat \varphi_k = \hat \eta_k\ ,
\]
whose solution is
\[
 \hat \varphi_k = (\lambda - e^{2 \pi i k \cdot \omega})^{-1}\hat \eta_k\ .
\]
We then have the following estimate:
\[
\begin{split}
|| \varphi||_{\rho - \delta} & \le
\sum_{k \in \integer^{d} \setminus \{0\}} |\hat \varphi_k | e^{2\pi (\rho -\delta)|k|}
\leq \sum_{k \in \integer^{d} \setminus \{0\}}
 |\lambda - e^{2 \pi i k \cdot \omega}|^{-1} \, |\hat \eta_k| e^{2\pi (\rho -\delta)|k|} \\
&  \le
\sum_{k \in \integer^{d} \setminus \{0\}}
 \nu(\lambda;\omega, \tau) |k|^\tau ||\eta||_\rho\
e^{-2\pi \rho|k|}
e^{2\pi (\rho -\delta)|k|} \\
& =
\sum_{k \in \integer^{d} \setminus \{0\}}
 \nu(\lambda;\omega, \tau) |k|^\tau ||\eta||_\rho\
e^{-2\pi\delta |k|} \\
& \le
C  \nu(\lambda;\omega,\tau) ||\eta||_\rho
\sum_{j \in \nat}\ j^{\tau + d -1}
e^{-2\pi \delta j} \ ,
\end{split}
\]
where we have just used the Cauchy estimates for the Fourier coefficients
in terms of the supremum in a band and the definition of
the constant $\nu(\lambda;\omega,\tau)$
in \equ{divisors_lambda}. The desired result \equ{tame} just follows from estimating the last sum,
which is easily shown to be asymptotically bounded by $\delta^{-\tau-d}$.
\end{proof}

\begin{remark}\label{rem:exponent}
When $\lambda \in \real$, the papers \cite{Russmann75,Russmann76a}
contain more sophisticated estimates that lead to conclusions in
Lemma~\ref{cohomology} with a better exponent on $\delta$. Namely,
with the same notation of Lemma~\ref{cohomology}, when $\lambda
\in \real$, \cite{Russmann75,Russmann76a} lead to the conclusion
that $\| \varphi \|_{\rho -\delta}  \le C \nu \delta^{-\tau}
\|\eta \|_\rho$. Note that not only the exponent in
\cite{Russmann75} is better than the exponent in
Lemma~\ref{cohomology}, but also that the constant appearing in
the bounds above is proportional to the Diophantine constant of
$\omega$.

Unfortunately, when $\lambda \in \complex \setminus \real$, it seems
that it would be necessary to reexamine carefully the proof of
\cite{Russmann75,Russmann76b}. This would indeed be quite interesting on its own merit,
but will not be pursued here since, for
the purposes of this paper, the geometry of the level sets of the constants in the
estimates in Lemma~\ref{cohomology} is more
important than the exponents. The straightforward argument above leads
to constants $\nu$, whose geometry is easy to analyze.

Note that the Diophantine properties of $\omega$ enter
into the proof of Theorem 20 in \cite{CallejaCL11} only through the
estimates of the linearized equations. Hence, using the elementary
estimates in Lemma~\ref{cohomology} is equivalent -- for the proof of
Theorem 20 in \cite{CallejaCL11} -- to considering $\omega$ with a different
Diophantine exponent.
\end{remark}

\begin{remark}
We recall that, due to Remark~\ref{rem:lambda}, when $|\lambda|\not=1$, there are no small divisors
in the solution of the cohomology equation \equ{twisted_cohomology}. Hence, when $|\lambda|\not=1$, we have that
$\|\varphi\|_{\rho-\delta}\leq C(||\lambda|-1|)^{-1}\ \delta^{-\tau-d}\ \|\eta\|_\rho$.
Indeed, $\|\varphi\|_{\rho-\delta}\leq C(||\lambda|-1|)^{-1}\ \delta^{-\tau}\ \|\eta\|_\rho$.\\
\end{remark}

\section{The invariance equation and some normalizations}
\label{sec:inveq}

\subsection{The invariance equation}

The centerpiece of our treatment will be the invariance equation
\begin{equation}\label{invariance}
f_{\mu_\eps, \eps}\circ K_\eps = K_\eps \circ T_\omega\ .
\end{equation}
We think of \eqref{invariance} as an equation for the parameters $\mu_\eps$ and
for the embedding $K_\eps$ once $\eps$ is fixed. Of course, when $\eps$ varies,
we can think of $K,\mu$ as functions of $\eps$.

We will develop a Newton's method that can start from
any approximate solution. Using the geometric properties of
the system, the equations can be reduced to constant coefficients.
This has been the idea of the KAM theory based on parameterization,
started in \cite{Llave01c,LlaveGJV05} for symplectic systems; the extension of
the formalism for  conformally symplectic systems is
developed in \cite{CallejaCL11}.

\subsection{Some normalizations and uniqueness}
We note that the equation \eqref{invariance} never has
unique solutions.
If $(K,\mu)$ is  a solution so is, for any $\sigma \in
\torus^d$,  $(K^{(\sigma)},\mu)$ where
$K^{(\sigma)}(\theta) = K(\theta + \sigma)$.
Of course, $K^{(\sigma)}$ describes the same torus as $K$, the only difference
is the origin of the parameterization.
So, we can hope to get uniqueness only if we impose a normalization
that fixes the origin in the $\theta$ variables.
Later we will see that, indeed, this change of the origin is
the only source of non-uniqueness and that, once we fix it in
any reasonable way, we obtain that the solutions of \eqref{invariance}
and the normalization are locally unique. For us the normalization is
important because it allows for analytic continuation of solutions
produced by different methods.

The following normalization has been found to be geometrically natural
and easy to implement numerically.

Given a reference embedding $K_0$, which we choose once and for
all, we can form the matrix $M$ obtained juxtaposing the matrices $DK_0$ and
$J^{-1}\circ K_0 DK_0 N$, say
\beq{Mdef}
M = [DK_0\,|\, J^{-1}\circ K_0 DK_0 N]\ ,
\eeq
where
\beq{Ndef}
N=(DK_0^\top\ DK_0)^{-1}
\eeq
is a normalizing matrix.
As we will see below (see \cite{CallejaCL11} for more details), the matrix $M$ provides a frame of reference
near the image of $K_0$. Notice that $DK_0$ transforms vectors
that are in the tangent space to the image of $K_0$ and that, since
$K_0(\torus^d)$ will be almost Lagrangian, we have
that $J^{-1}\circ K_0 DK_0$ -- the symplectic conjugate -- will be almost perpendicular.

We will say that a torus with embedding $K_0$ is normalized, when
\begin{equation} \label{normalized}
\int_{\torus^d} \Big[M^{-1} (\theta)( K(\theta) - K_0(\theta))\Big]_1 \, d\theta = 0\ ,
\end{equation}
where the subscript $1$ denotes that we take the first $d$ rows.

The geometric interpretation is that $M$ defines a particularly interesting system of coordinates
near the torus $K_0$. The quantity $M^{-1} (\theta)( K(\theta) - K_0(\theta))$ expresses the displacement from
$K_0(\theta)$ to $K(\theta)$ in this system of coordinates and our condition is that,
in this system of coordinates, the $\theta$ component of the displacement has zero average.

With reference to the normalization \eqref{normalized}, we recall also the following result (Proposition 26
from \cite{CallejaCL11}), which shows that it can be imposed without any loss of generality for
solutions that are close.

\begin{proposition}
\label{prop:normalize}
Let $K_1, K_2$ be solutions of \eqref{invariance},
$\| K_1 - K_2\|_{C^1}$ be sufficiently small
(with respect to quantities depending only on $M$ -- computed out of
$K_1$ -- and $f$).
Then, there exists  $\sigma \in \real^n$, such that
$K_2^{(\sigma)}=K_2 \circ T_\sigma$ satisfies \eqref{normalized}.
Furthermore:
$$
|\sigma | \le C\, \| K_1 - K_2\|_{C^0}\ ,
$$
where the constant $C$ can be chosen to be as close to $1$ as desired by
assuming that $f_\mu$, $K_1$, $K_2$ are twice
differentiable, $DK_1^T DK_1$ is invertible
and $\| K_1 - K_2\|_{C^0}$ is sufficiently small.

The $\sigma$ thus chosen is locally unique.
\end{proposition}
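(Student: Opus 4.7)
The plan is to reduce the proposition to a quantitative inverse function theorem applied to a map whose derivative at the origin is essentially the identity, by exploiting the geometric structure of the frame $M$.

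Concretely, take $K_0 = K_1$ in the construction of $M$, $N$ in \eqref{Mdef}, \eqref{Ndef}, and introduce
\[
\Phi(\sigma) = \int_{\torus^d} \bigl[M^{-1}(\theta)\bigl(K_2(\theta+\sigma) - K_1(\theta)\bigr)\bigr]_1\, d\theta, \qquad \sigma\in\real^d.
\]
Solving \eqref{normalized} for $K_2^{(\sigma)}$ is exactly $\Phi(\sigma)=0$. First I would observe the trivial bound
\[
|\Phi(0)| \le \|M^{-1}\|_{C^0}\, \|K_1-K_2\|_{C^0},
\]
which is finite since $DK_1^\top DK_1$ is invertible makes $M$ invertible with norm controlled by $\|K_1\|_{C^1}$ and $\|J^{-1}\circ K_1\|_{C^0}$.

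Next I would compute the derivative
\[
D\Phi(\sigma) = \int_{\torus^d} \bigl[M^{-1}(\theta)\, DK_2(\theta+\sigma)\bigr]_1\, d\theta.
\]
The key observation is the algebraic identity
\[
M^{-1}(\theta)\, DK_1(\theta) = \begin{pmatrix} \Id_d \\ 0 \end{pmatrix},
\]
which follows immediately from the fact that the first $d$ columns of $M$ are the columns of $DK_1$. Consequently $[M^{-1} DK_1]_1 = \Id_d$, so
\[
D\Phi(0) = \Id_d + \int_{\torus^d}\bigl[M^{-1}(\theta)\bigl(DK_2(\theta)-DK_1(\theta)\bigr)\bigr]_1\, d\theta = \Id_d + E,
\]
with $\|E\|\le \|M^{-1}\|_{C^0}\|K_1-K_2\|_{C^1}$. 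For $\|K_1-K_2\|_{C^1}$ sufficiently small, $D\Phi(0)$ is invertible and arbitrarily close to $\Id_d$.

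Then I would close the argument by a quantitative inverse function theorem (Newton/contraction in a ball). Writing $\Phi(\sigma) = \Phi(0) + D\Phi(0)\sigma + R(\sigma)$ where $R$ is $C^1$-small with $R(0)=0$, $DR(0)=0$, the fixed-point map $\sigma \mapsto -D\Phi(0)^{-1}(\Phi(0)+R(\sigma))$ is a contraction on a small ball around $0$ whose radius is controlled by $\|D\Phi(0)^{-1}\|\cdot|\Phi(0)|$. This yields a unique $\sigma$ with $\Phi(\sigma)=0$ and
\[
|\sigma|\le \|D\Phi(0)^{-1}\|\,|\Phi(0)| \cdot (1+o(1)) \le C\|K_1-K_2\|_{C^0},
\]
where $C$ can be made as close to $1$ as we wish by shrinking the $C^1$ tolerance. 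Local uniqueness follows from the contraction, and hence from the nondegeneracy of $D\Phi(0)$.

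The only nontrivial step is the clean computation of $D\Phi(0)$; this is not really an obstacle but rather the geometric content of the choice of the frame $M$. Note that the invariance equation \eqref{invariance} is not actually used in the argument: the proposition holds for any pair of embeddings with $DK_1^\top DK_1$ invertible and $\|K_1-K_2\|_{C^1}$ small; its statement in terms of solutions is only the context in which it will be applied.
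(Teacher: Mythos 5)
The paper does not give its own proof of this proposition; it is quoted verbatim as Proposition 26 of \cite{CallejaCL11}, so there is no internal argument to compare against. Your argument must therefore be assessed on its own merits.

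Your main structure is correct and captures the geometric content cleanly: reducing \eqref{normalized} for $K_2\circ T_\sigma$ (with $K_0=K_1$) to the equation $\Phi(\sigma)=0$; the algebraic identity $M^{-1}DK_1 = \bigl(\begin{smallmatrix}\Id_d\\0\end{smallmatrix}\bigr)$ coming from the block structure of $M$, which gives $D\Phi(0)=\Id_d + O(\|K_1-K_2\|_{C^1})$; and closing with a quantitative inverse function / contraction argument. This yields the existence of $\sigma$ solving \eqref{normalized}, its local uniqueness, and a bound $|\sigma|\le C\|K_1-K_2\|_{C^0}$. Your observation that the invariance equation \eqref{invariance} is never used — the proposition is a statement about pairs of $C^1$-close embeddings and the frame $M$ — is also correct and worth making.

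There is one genuine gap: you assert that $C$ can be made as close to $1$ as desired by shrinking the $C^1$ tolerance, but your estimate does not support this. Tracking your bounds, $|\sigma|\le (1+o(1))\,\|D\Phi(0)^{-1}\|\,|\Phi(0)|$, and while $\|D\Phi(0)^{-1}\|\to1$ as $\|K_1-K_2\|_{C^1}\to 0$, you only bound $|\Phi(0)|\le \|M^{-1}\|_{C^0}\,\|K_1-K_2\|_{C^0}$. Hence the constant you obtain is of size $\|M^{-1}\|_{C^0}$, a fixed quantity depending on $K_1$ and $J$ that need not be close to $1$, and shrinking the $C^1$ tolerance does nothing to it. Establishing the ``$C$ close to $1$'' refinement of the proposition requires a sharper estimate of the integral $\Phi(0)$ (for example, isolating the tangential component of $K_2-K_1$ in the $M$-frame and using the twice-differentiability assumption to control the transversal contribution), or an honest acknowledgment that your argument gives $C\sim\|M^{-1}\|_{C^0}$ rather than $C\approx 1$.
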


Note that Proposition~\ref{prop:normalize} shows that if we have a family of solutions close to $K_0$, we can
modify this family of solutions by composing them with a small displacement, so that they
are normalized solutions.

\section{Statement of the main result, Theorem~\ref{main}}\label{sec:main}
In this Section, we state the main result, Theorem~\ref{main}, which - under suitable assumptions on the
mapping, the frequency and some non--degeneracy conditions - allows us to prove the existence of an
exact solution of the invariance equation, analytic in the set $\G_{r_0}$ introduced in
\equ{goodset}.

\begin{theorem}\label{main}
Let $\M\equiv\torus^d\times\B$ with $\B\subseteq\real^d$ an open, simply connected domain with smooth
boundary; $\M$ is endowed with an analytic symplectic form $\Omega$. Let us denote by $J=J(x)$ the matrix
representing $\Omega$ at $x$, so that for any vectors $u$, $v$, one has
$$
\Omega_x(u,v)=(u,J(x)v)\ .
$$

Let $\omega \in \real^d$ satisfy Definition~\ref{vectordiophantine} for Diophantine vectors.

Let $f_{\mu,\eps}$ with $\mu\in\Gamma$, $\Gamma\subseteq \complex^d$ open, $\eps\in\complex$, be a family of conformally symplectic mappings,
that satisfy \eqref{familiesdefined} with conformal factor as in {\bf H}$\lambda$.

Assume that for $\eps = 0$ the family of maps $f_{\mu,0}$ is symplectic.

Assume that for some value $\mu_0$ the map $f_{\mu_0, 0}$ admits a Lagrangian invariant torus,
namely we can find an analytic embedding from
$\torus^d \rightarrow \M$, say
$K_0  \in \A_\rho(\torus^d, \M)$, such that
\beq{K0}
f_{\mu_0,0} \circ K_0 = K_0 \circ T_\omega\ ,
\eeq
where $K_0 \in \A_\rho$ for some $\rho > 0$.
Moreover, assume that $K_0$ is Lagrangian, namely that
$$
DK_0^\top\ J\circ K_0\ DK_0=0\ .
$$
Assume furthermore that the torus $K_0$ satisfies the following hypothesis.

{\bf HND} Let the following non--degeneracy condition be satisfied:
\beqano
\det
\left(
\begin{array}{cc}
  {\overline S_0} & {\overline {S_0(B_{b0})^0}}+\overline{\widetilde A_{10}} \\
  0 & \overline{\widetilde A_{20}} \\
 \end{array}%
\right) \ne 0\ ,
\eeqano
where the $d \times d$ matrix $S_0$ is defined as
\beqano
S_0(\theta)&\equiv& N(\theta+\omega)^\top\ DK_0(\theta+\omega)\ Df_{\mu_0,0}\circ K_0(\theta)\ J^{-1}\circ K_0(\theta)\ DK_0(\theta) N(\theta)\nonumber\\
&-&N(\theta+\omega)^\top DK_0(\theta+\omega)^\top J^{-1}\circ K_0(\theta+\omega)\ DK_0(\theta+\omega)N(\theta+\omega)
\eeqano
with $N$ as in \equ{Ndef}, the $d \times d$ matrices
$\widetilde A_{10}$, $\widetilde A_{20}$ denote\footnote{We call attention that in
\cite{CallejaCL11} the statement of Theorem 20 defines the matrices ${\widetilde A}_1,$
${\widetilde A}_2$ as the first
$d$ and the last $d$ columns of the $2d \times d $ matrix $\widetilde A$.
Clearly, this sentence does not make sense, unless one changes
``columns" by rows. Since the sentence as written in \cite{CallejaCL11} is clearly impossible
and the detailed calculations are given, we hope that this has not
misled the readers, but we take the opportunity to set the record straight.
See also the discussion in Section~\ref{sec:KAM}.} the first $d$ and
the last  $d$ rows of the $2d\times d$ matrix
$\tilde A_0=(M\circ T_{\omega})^{-1}\ (D_{\mu} f_{\mu_0,0} \circ K_0)$,
where $M$ is as in \equ{Mdef}, $(B_{b0})^0$ is the solution (with  zero average)
of the cohomology equation $(B_{b0})^0-(B_{b0})^0\circ T_\omega=-(\widetilde A_{20})^0$,
where $(B_{b0})^0\equiv B_{b0}-\overline{B_{b0}}$ and the overline denotes the average.

Then, we have the following results.

\medskip

A) We can find a formal power series expansion
$K_\eps^{[\infty]}= \sum_{j=0}^\infty\eps^j  K_j$
satisfying \eqref{invariance} in the sense of formal power series.

More precisely, defining $K_{\eps}^{[\le N]} = \sum_{j=0}^{N}\eps^j  K_j$,
$\mu_{\eps}^{[\le N]} = \sum_{j=0}^{N}\eps^j  \mu_j$ for any $N\in\nat$ and $\rho>0$, we have
\begin{equation}\label{formalpower}
|| f_{\mu_{\eps}^{[\le N]}, \eps} \circ K^{[\le N]}_\eps -  K^{[\le N]}_\eps \circ T_\omega ||_{\rho'} \le C_N |\eps|^{N+1}
\end{equation}
for some $0<\rho'<\rho$ and $C_N>0$.

\medskip

B) We can find a set $\G_{r_0}$ of the form introduced in
\equ{Gr0} with $r_0$ sufficiently small and for any $0 < \rho' < \rho$, we can find the
functions
\[
\begin{split}
&K: \G_{r_0} \rightarrow \A_{\rho'}\ , \\
& \mu:  \G_{r_0} \rightarrow \complex^d\ ,
\end{split}
\]
which are analytic in the interior of
$\G_{r_0}$ and extend continuously to the boundary of $\G_{r_0}$,
such that for $\eps\in\G_{r_0}$ they satisfy exactly the invariance equation
\begin{equation}\label{isoslution}
f_{\mu_\eps, \eps} \circ K_\eps - K_\eps \circ T_\omega=0\ .
\end{equation}

Furthermore, we have that the solutions thus found have
the formal series provided in part A) as an asymptotic expansion. That is, for
any $N \in \nat$ and for any $0<\rho'<\rho$:
\beqa{Kmu}
||K^{[\le N]}_\eps -  K_\eps||_{\rho'}  &\le& C_N |\eps|^{N+1} \ ,\nonumber\\
|\mu^{[\le N]}_\eps -  \mu_\eps|  &\le& C_N |\eps|^{N+1}\ .
\eeqa
\end{theorem}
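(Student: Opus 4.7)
The plan is to combine the two ingredients outlined in the introduction: construct approximate solutions to all orders via Lindstedt (Part~A), and then apply the a-posteriori Theorem~\ref{mainKAM} with the truncated Lindstedt polynomial as the initial guess (Part~B). Analyticity of the resulting $K_\eps,\mu_\eps$ in the interior of $\G_{r_0}$ and continuity up to the boundary will follow from the analytic parameter dependence of the KAM iteration, and the asymptotic expansion \eqref{Kmu} will follow from the local uniqueness of the normalized formal solution guaranteed by Proposition~\ref{prop:normalize}.

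For Part~A, I write $K_\eps=\sum_{j\ge 0}\eps^j K_j$, $\mu_\eps=\sum_{j\ge 0}\eps^j \mu_j$, and match powers in \eqref{invariance}. At order $0$ we recover \eqref{K0} by hypothesis; at order $n\ge 1$ one obtains a linear equation for $(K_n,\mu_n)$ whose forcing is built from the already computed $(K_j,\mu_j)_{j<n}$ and the Taylor coefficients of $f_{\mu,\eps}$ and $\lambda(\eps)$. Because $K_0$ is Lagrangian and $f_{\mu_0,0}$ is symplectic, the frame $M$ in \eqref{Mdef}--\eqref{Ndef} conjugates the linearized invariance operator around $K_0$ to a block-triangular operator with constant coefficients on the diagonal (automatic reducibility, cf.~\cite{CallejaCL11}). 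The order-$n$ step thus reduces to two cohomology equations of the form \eqref{twisted_cohomology}, one with $\lambda=1$ and one involving the deformation of $\lambda$ (which contributes a non-trivial twist only when $n\ge a$). The zero-average compatibility conditions are precisely those encoded in \textbf{HND}, which determines $\mu_n$ uniquely at each order, and the normalization \eqref{normalized} fixes the free constant in the cohomology solution. Applying Lemma~\ref{cohomology} with a geometric schedule of strip losses $\rho_n=\rho_{n-1}-\delta_n$ yields $K_n\in\A_{\rho'}$ for any $\rho'<\rho$; truncating at order $N$ gives the approximate invariance error bound \eqref{formalpower}.

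For Part~B, fix $N$ large in the definition of $\G_{r_0}=\G_{r_0}(A;\omega,\tau,N)$ and pick an auxiliary truncation order $\bar N$ with $\bar N +1>2a(N+1)$. Feed $(K_\eps^{[\le\bar N]},\mu_\eps^{[\le\bar N]})$ from Part~A into Theorem~\ref{mainKAM}: the invariance error is $\|E_\eps\|_{\rho_1}\le C_{\bar N}|\eps|^{\bar N +1}$ in a strip $\rho_1<\rho$, while on $\G_{r_0}$ the defining inequality combined with \textbf{H$\lambda$} gives
\[
\nu(\lambda(\eps);\omega,\tau)\le A|\lambda(\eps)-1|^{-(N+1)}\le C|\eps|^{-a(N+1)}.
\]
The smallness condition \textbf{H5} is of the schematic form $\nu^{2}\delta^{-s}\|E\|_{\rho_1}<\kappa$, with $\kappa$ controlled by the non-degeneracy constants at $\eps=0$; the choice of $\bar N$ makes $\nu^{2}\|E\|_{\rho_1}=O(|\eps|^{\bar N +1-2a(N+1)})$, which is made arbitrarily small by shrinking $r_0$. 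The remaining non-degeneracy hypotheses of Theorem~\ref{mainKAM} coincide at $\eps=0$ with \textbf{HND} and, by analytic dependence of $f_{\mu,\eps}$ and of the frame $M$ on $\eps$, persist uniformly on $\G_{r_0}$ after a further reduction of $r_0$. Theorem~\ref{mainKAM} therefore produces a true solution $(K_\eps,\mu_\eps)$ of \eqref{isoslution} for every $\eps\in\G_{r_0}$.

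It remains to establish analyticity and the asymptotic expansion. The iterative step in Theorem~\ref{mainKAM} consists of algebra, differentiation, composition and two calls to Lemma~\ref{cohomology}; all of these preserve analytic dependence on $\eps$, since the coefficients $(\lambda(\eps)-e^{2\pi i k\cdot\omega})^{-1}$ are analytic in $\eps$ wherever $\nu(\lambda(\eps);\omega,\tau)<\infty$. Hence the fixed point is analytic in the interior of $\G_{r_0}$, and uniform convergence extends it continuously to the boundary. For \eqref{Kmu}, at any interior point of $\G_{r_0}$ the analytic function $\eps\mapsto(K_\eps,\mu_\eps)$ admits a convergent Taylor expansion that is a normalized formal solution of \eqref{invariance}; by the unique solvability of the order-by-order equations in Part~A, it must coincide with the Lindstedt series, so truncating at order $N$ produces a remainder of order $|\eps|^{N+1}$, uniformly in $\G_{r_0}$. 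The hardest point in this program is the quantitative matching between the Lindstedt truncation order $\bar N$ and the small-divisor exponent $N$ in \eqref{goodset}: the explicit geometry of $\G$, together with the lower semi-continuity of $\nu$ (Remark~\ref{rem:semicontinuous}), is what lets one trade the $|\eps|^{\bar N +1}$ smallness from Part~A against the powers of $|\lambda-1|^{-1}$ entering via the Diophantine bound, and to do so \emph{uniformly} in $\eps\in\G_{r_0}$.
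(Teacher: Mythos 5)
Your overall strategy is the same as the paper's: construct the Lindstedt coefficients to all orders via the automatically reducible frame in Section~\ref{sec:Lindstedt} (Part~A), then feed the truncated polynomial into the a-posteriori Theorem~\ref{mainKAM} for Part~B, using the analyticity-preservation of Algorithm~\ref{alg:step} (Corollary~\ref{isanalytic}) and local uniqueness for analytic continuation. On balance this is the right proof, and one detail you handle more carefully than the paper does: you decouple the Lindstedt truncation order $\bar N$ from the exponent $N$ in \eqref{goodset} and impose $\bar N+1>2a(N+1)$ so that $\nu(\lambda(\eps);\omega,\tau)^2\,\|E_\eps\|\to 0$ on $\G_{r_0}$ as $r_0\to 0$; the paper's Section~\ref{sec:partB} uses the same symbol $N$ for both and merely asserts that {\bf H5} can be met, which is not transparent with the exponent arithmetic you carried out.

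There is, however, a genuine gap in your treatment of the asymptotic bound \eqref{Kmu}. You argue that at interior points the Taylor expansion of $(K_\eps,\mu_\eps)$ is a normalized formal solution and ``must coincide with the Lindstedt series, so truncating at order $N$ produces a remainder of order $|\eps|^{N+1}$, uniformly in $\G_{r_0}$.'' This does not follow: the Taylor expansion at an interior point $\eps_0\neq 0$ coincides with the Lindstedt series \emph{based at $\eps_0$} (Proposition~\ref{Lindstedt_anywhere}), not with the one based at $0$ entering \eqref{Kmu}; and even if one knew the derivatives at $0$ agreed, a uniform $O(|\eps|^{N+1})$ remainder up to the boundary is a Whitney-type statement that requires the compensation argument of Section~\ref{sec:whitney}, not just order-by-order uniqueness. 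The intended argument is direct and quantitative: for each target order $N$, apply Theorem~\ref{mainKAM} with $(K_a,\mu_a)=(K_\eps^{[\le M]},\mu_\eps^{[\le M]})$ for a yet larger $M$; the explicit conclusion estimate on $\|K_\eps-K_a\|_{\rho-\delta}$ and $|\mu_\eps-\mu_a|$, combined with the upper bound on the Diophantine constant available on $\G_{r_0}$, gives a remainder $O(|\eps|^{M+1-c})$ for an explicit $c$ depending on $a$ and $N$; choosing $M$ large enough forces this below $C_N|\eps|^{N+1}$, and Lemma~\ref{lem:unique} (not Proposition~\ref{prop:normalize}, which only shows the normalization can be imposed) guarantees that all these applications of the KAM theorem return the same normalized $(K_\eps,\mu_\eps)$. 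Triangle inequality with $\|K_\eps^{[\le N]}-K_\eps^{[\le M]}\|=O(|\eps|^{N+1})$ then gives \eqref{Kmu}.
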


\begin{remark}
The condition {\bf HND} has a very transparent geometric interpretation, which we will present in the
proof given in Section~\ref{sec:Lindstedt}. See also \cite{CallejaCL11} for more details.

We also remark that the formal power series can be chosen to be normalized with respect to $K_0$.
The $K_\eps$ can also be chosen to be normalized.
\end{remark}

\section{Quantitative \emph{a-posteriori}  KAM theorem for
conformally symplectic systems}\label{sec:KAM}

The goal of this Section is to state a quantitative
KAM theorem, namely Theorem~\ref{mainKAM}, which is very similar to
Theorem 20 in \cite{CallejaCL11}. We will also detail the (rather
minimal) changes to the proof of Theorem 20 in \cite{CallejaCL11}, needed
to obtain a  proof of Theorem~\ref{mainKAM}.

Theorem 20  in \cite{CallejaCL11}  shows that, given a fixed
family of conformally
symplectic mappings and an approximate invariant torus for a value of
the parameter, we can find an exact invariant torus for a
nearby value of the parameter. The result presented in \cite{CallejaCL11}
is based on an a-posteriori format, which is very natural, because we do not
need to start from an integrable system.

The main novelty here with respect to Theorem 20 in \cite{CallejaCL11} is
that we highlight the dependence of the results on $\lambda$ -- the conformal
factor -- and
that we allow this conformal factor to be complex.

In \cite{CallejaCL11} the conformal factor $\lambda$ was considered
essentially fixed.  In some results of \cite{CallejaCL11}, $\lambda$
was allowed to range over a real interval $[1-A,1+A]$ for some $A>0$.
In this paper, however, $\lambda$ changes and the main goal of this
Section is to consider the dependence on $\lambda$.

The proof of Theorem~\ref{mainKAM} will be just walking through
the proof in \cite{CallejaCL11}, but keeping track of the
dependence of the constants on $\lambda$.
We remark, however, that the dependence in $\lambda$ comes
only through the Diophantine constant $\nu(\lambda;\omega,\tau)$
as in Definition~\ref{lambdadiophantine}.

The present treatment requires only minor
differences with that
of \cite{CallejaCL11}. More precisely,
\begin{itemize}
\item
In \cite{CallejaCL11} there is a treatment both of the analytic
and the finitely differentiable case.
In this paper, we
will only present the analytic case, since it is the one used in the applications of this paper.
\item
In \cite{CallejaCL11} there is a treatment both of the
case uniform in $\lambda$ and the case for fixed $\lambda$.
Moreover, in \cite{CallejaCL11} the parameter $\lambda$ was supposed to be real. In
our case, we will allow  $\lambda$ to be complex. We will obtain
estimates that, in the language of \cite{CallejaCL11},
are uniform in $\lambda$, but we will pay attention to the
Diophantine constants and the geometry of the sets in the
complex where these constants take values.
\item
In this paper we are using only crude bounds for cohomology equations,
whereas in \cite{CallejaCL11} we used the R\"ussmann estimates
\cite{Russmann75}. We do not know how to adapt the
R\"ussmann estimates to the case that $\lambda$ is complex (see Remark~\ref{rem:exponent}).
\end{itemize}

\begin{theorem}\label{mainKAM}
Let $\M\equiv\torus^d\times\B$ with $\B\subseteq\real^d$ an open, simply connected domain with smooth
boundary, endowed with a scalar product and a symplectic form $\Omega$.

Assume that the following hypotheses {\bf H1-H2-H3-H4-H5} are satisfied.

{\bf H1} Let $\omega\in\real^d$ be Diophantine of class
$\tau\in\real_+$ and constant $\nu(\omega; \tau)$.
For $\lambda\in\complex$ assume $\nu(\lambda; \omega, \tau)<\infty$, where $\nu(\lambda; \omega, \tau)$
is defined in \equ{divisors_lambda}.

{\bf H2} Let $f_{\mu,\eps}$ with $\mu\in\Gamma$, $\Gamma\subseteq\complex^d$ open, $\eps\in\complex$, be a family of (complex) conformally symplectic
mappings with respect to a symplectic form $\Omega$, that is
$f_{\mu,\eps}^* \Omega =  \lambda(\eps) \Omega\ $ (see \equ{familiesdefined})
with $\lambda(\eps)$ complex.

Let $K_a :\torus^d \to \M$,  $\mu_a \in \complex^d$, be an approximate solution of \equ{invariance0}, such that
\begin{equation}\label{initial_invariance}
f_{\mu_a,\eps}\circ K_a  - K_a \circ T_\omega  = E
\end{equation}
with error term $E$.

{\bf H3} Assume that the following non--degeneracy condition holds:
\begin{equation}
\label{non-degeneracy}
\det
\left(
\begin{array}{cc}
  {\overline S} & {\overline {S(B_b)^0}}+\overline{\widetilde A_1} \\
  (\lambda-1)\Id & \overline{\widetilde A_2} \\
 \end{array}%
\right) \ne 0\ ,
\end{equation}
where the $d \times d$ matrix $S$ is defined as
\beqa{Sdef}
S(\theta)&\equiv& N(\theta+\omega)^\top DK_a(\theta+\omega)\ Df_{\mu_a,\eps}\circ K_a(\theta)\ J^{-1}\circ K_a(\theta)\ DK_a(\theta) N(\theta)\nonumber\\
&-&\lambda\ N(\theta+\omega)^\top DK_a(\theta+\omega)^\top J^{-1}\circ K_a(\theta+\omega)\ DK_a(\theta+\omega)N(\theta+\omega)
\eeqa
with $N$ as in \equ{Ndef} with $K_a$ replacing $K_0$, the $d \times d$ matrices
$\widetilde A_1$, $\widetilde A_2$ denote the first $d$ and
the last  $d$ rows of the $2d\times d$ matrix
$\tilde A=(M\circ T_{\omega})^{-1}\ (D_{\mu} f_{\mu_a,\eps} \circ K_a)$,
where $M$ is as in \equ{Mdef} with $K_a$ replacing $K_0$, $(B_b)^0$ is the solution (with  zero average)
of the cohomology equation $\lambda (B_b)^0-(B_b)^0\circ T_\omega=-(\widetilde A_2)^0$,
where $(B_b)^0\equiv B_b-\overline{B_b}$ and the overline denotes the average.

We denote by ${\mathcal T}$ the quantity
$$
{\mathcal T}  \equiv \left \|\left(%
\begin{array}{cc}
  {\overline S} & {\overline {S(B_b)^0}}+\overline{\widetilde A_1} \\
  (\lambda-1)\Id & \overline{\widetilde A_2} \\
 \end{array}%
\right)^{-1} \right \|
$$
and we refer to ${\mathcal T}$ as the \emph{twist constant}.

Assume that $K_a\in \A_\rho$ for some $\rho>0$.
Assume furthermore
that for $\mu \in \Gamma$
we have that $f_{\mu,\eps}$ is a $C^1$--family of analytic functions on
a domain -- open connected set -- $\mathcal{C} \subset
\complex^d/\integer^d \times \complex^d $
with the following assumption on the domain.

{\bf H4} There exists $\zeta>0$, so that \beqano
&&{\rm dist}( \mu_a, \partial \Gamma) \ge  \zeta\ , \\
&&{\rm dist}(K_a(\torus^d), \partial\mathcal{C})  \ge
 \zeta\ .
\eeqano
Finally, let the solution be sufficiently approximate according to the following assumption.

{\bf H5} For some $0 < \delta < \rho$, the error term $E$ in \equ{initial_invariance} satisfies the inequality
$$
|| E||_\rho \le C\, \Big[\nu(\omega; \tau) \nu(\lambda; \omega, \tau)\Big]^{2} \, \delta^{4(\tau+d)}\ ,
$$
where $C$ denotes  a constant that can depend on $\tau$,
$d$, ${\mathcal T}$, $\| DK_a\|_\rho$, $\| N\|_\rho$, $\|M\|_\rho$
$\| M^{-1} \|_\rho$ as well as on $\zeta$ entering in
{\bf H4}.

Then, there exists  $\mu_\eps, K_\eps$ such that
$$
f_{\mu_\eps,\eps} \circ K_\eps - K_\eps \circ T_\omega = 0\ .
$$
The  quantities $K_\varepsilon$, $\mu_\varepsilon$ satisfy the
inequalities \beqano || K_\eps - K_a ||_{\rho-\delta} &\le& C_K\,
\nu(\omega;\tau)^{-1}\, \nu(\lambda;\omega,\tau)^{-1}\,
\delta^{-2(\tau+d)}\,  ||E||_\rho\ , \nonumber\\
| \mu_\eps - \mu_a| &\le& C_\mu\, ||E||_\rho\ ,
\eeqano
for positive constants $C_K$, $C_\mu$.
\end{theorem}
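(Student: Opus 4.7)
The proof follows the Newton-iteration scheme of Theorem 20 in \cite{CallejaCL11}, and the plan is to walk through that argument while tracking how the estimates depend on the complex conformal factor $\lambda(\eps)$. The key point at each step is that the linearization of the invariance equation around $(K_a,\mu_a)$,
\[
Df_{\mu_a,\eps}\circ K_a \cdot \Delta K - \Delta K \circ T_\omega + D_\mu f_{\mu_a,\eps}\circ K_a \cdot \Delta \mu = -E,
\]
can be reduced, modulo quadratically small remainders, to two cohomology equations with constant coefficients by exploiting automatic reducibility. Concretely, I would recall from \cite{CallejaCL11} that because $K_a$ is approximately invariant and approximately Lagrangian (the Lagrangian defect being controlled by $\|E\|_\rho$ via the identity $f^*\Omega = \lambda\Omega$), the frame matrix $M$ in \equ{Mdef} conjugates $Df_{\mu_a,\eps}\circ K_a$ on the range of $DK_a$ and its symplectic complement into a block upper-triangular form with diagonal blocks $\Id$ and $\lambda\Id$. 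Writing the correction as $\Delta K = (M\circ T_\omega) W$ with $W=(W_1,W_2)^{\top}$ thus reduces the linearized equation to
\[
W_1 - W_1\circ T_\omega + S\, W_2 + \tilde A_1 \Delta\mu = \tilde E_1, \qquad \lambda W_2 - W_2\circ T_\omega + \tilde A_2 \Delta\mu = \tilde E_2,
\]
with $S$ as in \equ{Sdef} and $\tilde E_j$ determined by $E$.

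Next I would solve these equations in order. The second is a twisted cohomology equation with multiplier $\lambda$; by Lemma~\ref{cohomology} its zero-average part $(W_2)^0 = (B_b)^0\Delta\mu + \ldots$ is uniquely determined and satisfies a tame estimate with factor $\nu(\lambda;\omega,\tau)\delta^{-(\tau+d)}$. Taking averages in the second equation yields $(\lambda-1)\overline{W_2} + \overline{\tilde A_2}\Delta\mu = \overline{\tilde E_2}$. Inserting $W_2$ into the first equation, solving its zero-average part via Lemma~\ref{cohomology} with $\lambda=1$ (cost $\nu(\omega;\tau)\delta^{-(\tau+d)}$), and taking averages produces a second linear relation between $\overline{W_2}$, $\Delta\mu$ and known quantities. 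Stacking these two average relations is exactly the $2d\times 2d$ system whose coefficient matrix appears in {\bf H3}, and the non-degeneracy assumption together with the twist constant $\mathcal{T}$ gives a uniform bound on its inverse, hence on $\overline{W_2}$ and $\Delta\mu$. One finally recovers $\overline{W_1}$ by the normalization \equ{normalized} inherited from Proposition~\ref{prop:normalize}.

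Assembling these bounds, each Newton step produces a correction obeying $\|\Delta K\|_{\rho-\delta/2} + |\Delta\mu| \le C\,\nu(\omega;\tau)\nu(\lambda;\omega,\tau)\,\delta^{-(\tau+d)} \|E\|_\rho$, and the Taylor remainder in the composition $f_{\mu_a+\Delta\mu,\eps}\circ(K_a+\Delta K)$ is quadratic in the correction, so the new error satisfies
\[
\|E_{\mathrm{new}}\|_{\rho-\delta} \le C\,\bigl[\nu(\omega;\tau)\,\nu(\lambda;\omega,\tau)\bigr]^{2}\,\delta^{-4(\tau+d)}\,\|E\|_\rho^{2},
\]
where $C$ depends only on the quantities listed in {\bf H5}. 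With geometrically decreasing $\delta_n$ and inductive control of $\|DK_n\|$, $\|N_n\|$, $\|M_n\|$, $\|M_n^{-1}\|$ and $\mathcal{T}_n$ on a shrinking scale $\rho_n$ (using {\bf H4} to guarantee that the iterates stay inside the domain $\mathcal{C}$ and $\mu_n$ inside $\Gamma$), the standard quadratic-convergence bookkeeping of \cite{CallejaCL11} shows convergence to $(K_\eps,\mu_\eps)$ with the stated distance bounds from $(K_a,\mu_a)$.

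The main obstacle, and the only genuine novelty relative to \cite{CallejaCL11}, is the twisted cohomology equation with complex $\lambda$. The sharp R\"ussmann-type estimates used for real $\lambda$ in \cite{CallejaCL11} do not transfer to the complex setting (cf.\ Remark~\ref{rem:exponent}), so I would rely on the crude Lemma~\ref{cohomology}; this is what raises the exponent in {\bf H5} from the $4\tau$ of \cite{CallejaCL11} to $4(\tau+d)$. A secondary subtlety is that the block $(\lambda-1)\Id$ in the twist matrix degenerates as $\lambda\to 1$, so the constant $\mathcal{T}$ may blow up along sequences $\eps\to 0$; this is legitimate here because the smallness in {\bf H5} is precisely what tames that blow-up, and the geometric restriction of $\eps$ to the set $\G_{r_0}$ is later designed in Section~\ref{sec:main} exactly to ensure that $\mathcal{T}$ and $\nu(\lambda(\eps);\omega,\tau)$ remain compatible with the initial error produced by the Lindstedt truncation in part A of Theorem~\ref{main}.
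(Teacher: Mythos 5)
Your proposal follows the same Newton-iteration and automatic-reducibility route as the paper, which simply defers to Theorem 20 of \cite{CallejaCL11} while tracking the dependence on $\lambda$; your account of the two cohomology equations, the $2d\times 2d$ average system matching the matrix of {\bf H3}, and the inflated exponent $4(\tau+d)$ produced by substituting Lemma~\ref{cohomology} for the R\"ussmann estimates is exactly what is sketched in Section~\ref{sec:theoremmainKAMproof}. One small slip: the change of variables is $\Delta K = M W$, not $\Delta K = (M\circ T_\omega)W$, because the reducibility identity $Df_{\mu_a,\eps}\circ K_a\, M = (M\circ T_\omega)\begin{pmatrix}\Id & S\\ 0 & \lambda\Id\end{pmatrix} + R$ triangularizes the linearization precisely after multiplying on the left by $(M\circ T_\omega)^{-1}$; with your substitution the identity would not apply.
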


For a fixed value of $\eps$, say $\eps=\eps^*$, the solutions of \equ{invariance0} are locally unique, provided
the normalization condition \equ{normalized} is satisfied. Following \cite{CallejaCL11}, we have the following result.

\begin{lemma}\label{lem:unique}
Let $(K^{(1)},\mu^{(1)})$, $(K^{(2)},\mu^{(2)})$ be solutions of \equ{invariance0} with $\|K^{(1)}-K^{(2)}\|$ sufficiently small,
$\mu^{(1)}$, $\mu^{(2)}$ close enough. Assume that $K^{(2)}$ satisfies
\equ{normalized}, that the non--degeneracy condition {\bf H3} is satisfied at $(K^{(1)},\mu^{(1)})$ and that the assumption {\bf H4} on the
domain is satisfied.

Then, there exists $\sigma\in\real^d$, such that $K^{(2)}=K^{(1)}\circ T_\sigma$, $\mu^{(1)}=\mu^{(2)}$.
\end{lemma}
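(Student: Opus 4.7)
The plan is to fix the translation ambiguity using Proposition~\ref{prop:normalize} and then run a linearized/contraction argument on the difference of the two solutions. By translation invariance of \equ{invariance0}, the family $\{K^{(1)}\circ T_\xi\}_{\xi\in\real^d}$ are all solutions with parameter $\mu^{(1)}$. By Proposition~\ref{prop:normalize}, since $\|K^{(1)} - K^{(2)}\|$ is small, there is a locally unique $\sigma\in\real^d$ such that $\tilde K^{(1)} := K^{(1)}\circ T_\sigma$ satisfies the same normalization \equ{normalized} as $K^{(2)}$. After this reduction it suffices to prove $\tilde K^{(1)} = K^{(2)}$ and $\mu^{(1)} = \mu^{(2)}$, from which $K^{(2)} = K^{(1)}\circ T_\sigma$ follows.

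Next I would subtract the two invariance equations and Taylor-expand $f_{\mu,\eps}$ around $(\mu^{(1)}, \tilde K^{(1)})$. Setting $\Delta K := K^{(2)} - \tilde K^{(1)}$ and $\Delta\mu := \mu^{(2)} - \mu^{(1)}$, this gives
\[
Df_{\mu^{(1)},\eps}\circ \tilde K^{(1)}\cdot \Delta K - \Delta K\circ T_\omega + D_\mu f_{\mu^{(1)},\eps}\circ \tilde K^{(1)}\cdot \Delta\mu = R,
\]
with $R$ quadratic in $(\Delta K, \Delta\mu)$ and bounded by a constant (depending on the $C^2$-norm of the family) times $\|\Delta K\|_\rho^2 + |\Delta\mu|^2$. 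I would then apply the same change of frame that underlies the Newton step of Theorem~\ref{mainKAM}: writing $\Delta K = M\cdot W$ with $M$ the frame \equ{Mdef} computed from $\tilde K^{(1)}$, the automatic reducibility of \cite{CallejaCL11} converts the linearized operator into two cohomology equations of the form \equ{twisted_cohomology}, one with multiplier $\lambda = 1$ and one with multiplier $\lambda = \lambda(\eps)$, together with algebraic compatibility conditions on the averages.

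By the non-degeneracy condition \textbf{H3}, the averaged compatibility system has invertible coefficient matrix \equ{non-degeneracy}, so it determines $\Delta\mu$ and the free average in the $\lambda = 1$ cohomology equation uniquely. The only remaining freedom is the component of $\Delta K$ along the translation direction $DK$, which is precisely what is pinned down by \equ{normalized}; since $\tilde K^{(1)}$ and $K^{(2)}$ obey the same normalization, that component of $\Delta K$ has zero average. Combining the tame estimates of Lemma~\ref{cohomology} with the quadratic bound on $R$ yields
\[
\|\Delta K\|_{\rho-\delta} + |\Delta\mu| \le C\bigl(\|\Delta K\|_\rho^2 + |\Delta\mu|^2\bigr),
\]
which under the smallness hypotheses of the lemma forces $\Delta K \equiv 0$ and $\Delta\mu = 0$.

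The main obstacle is the bookkeeping of the second paragraph: verifying that the same frame change and projection onto averages that underlie the Newton step of Theorem~\ref{mainKAM} also diagonalize the \emph{difference} equation above, and that the matrix \equ{non-degeneracy} controls precisely the averages one must fix. This is a direct adaptation of the calculations in \cite{CallejaCL11} -- with the quadratic remainder $R$ playing the role of the error term $E$ in Theorem~\ref{mainKAM} -- so it should go through with only cosmetic changes. The analyticity loss $\delta$ is absorbed in a single Cauchy-type estimate rather than an iterative scheme, since both solutions are assumed analytic on a common strip from the outset.
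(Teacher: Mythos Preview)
Your approach is essentially the one the paper has in mind: the paper does not prove Lemma~\ref{lem:unique} directly but states that it ``does not require any change from Theorem~29 in \cite{CallejaCL11}'', and that argument is exactly the linearize--reduce--estimate scheme you outline (subtract the two invariance equations, use automatic reducibility to turn the linear part into the two cohomology equations, invoke {\bf H3} for the averages, and use the normalization to kill the tangential freedom).

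There is one genuine slip at the end. The inequality
\[
\|\Delta K\|_{\rho-\delta} + |\Delta\mu| \le C\,\delta^{-2(\tau+d)}\bigl(\|\Delta K\|_\rho^2 + |\Delta\mu|^2\bigr)
\]
does \emph{not} by itself force $\Delta K\equiv 0$: the left-hand side is measured on a strictly smaller strip than the right-hand side, so a single application only gives a bound, not vanishing. Your final sentence (``the analyticity loss $\delta$ is absorbed in a single Cauchy-type estimate rather than an iterative scheme'') is incorrect: the fact that both $K^{(1)}$ and $K^{(2)}$ live on a common strip does not remove the domain loss incurred when solving the cohomology equations. The standard fix---and what is done in \cite{CallejaCL11}---is to iterate: choose a sequence $\delta_n$ with $\sum_n\delta_n<\rho$, set $\rho_n=\rho-\sum_{j<n}\delta_j$, and show that $\varepsilon_n:=\|\Delta K\|_{\rho_n}+|\Delta\mu|$ satisfies $\varepsilon_{n+1}\le C\delta_n^{-2(\tau+d)}\varepsilon_n^2$, whence $\varepsilon_n\to 0$ super-exponentially on the limiting strip $\rho_\infty>0$; analytic continuation then gives $\Delta K\equiv 0$ on all of $\torus^d_\rho$. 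With this correction your argument is complete and matches the paper's.
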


\subsection{Some notes on the proof of Theorem~\ref{mainKAM} and Lemma~\ref{lem:unique}}
\label{sec:theoremmainKAMproof}
Theorem~\ref{mainKAM} is proved in \cite{CallejaCL11},
but without keeping track of the difference
between $\nu(\omega;\tau)$ and $\nu(\lambda; \omega,\tau)$,
since only a fixed $\lambda$ was considered in \cite{CallejaCL11}.

For the sake of completeness, we will just repeat the main steps,
so that we can trace the constants and verify that the constants are
those we claimed in Theorem~\ref{mainKAM}. We omit details that can be
found in \cite{CallejaCL11}.

The procedure of \cite{CallejaCL11} is just to device
and estimate a Newton-like method based on some identities
obtained from the geometric properties of the
map. Using these geometric identities,
the equations appearing in Newton's method are reduced
to constant coefficient equations of the form appearing in
Lemma~\ref{cohomology}. This procedure is sometimes called \emph{automatic
reducibility}. Beside leading to a proof of the reducibility,
it leads to an efficient numerical algorithm
that was implemented in \cite{CallejaC10, CallejaF11}.
An important feature of the method, crucial
for our purposes, is that it can start from an approximate
solution, even if the map $f$ is far from integrable.

The Newton's method for equation \eqref{invariance} is based on the following steps. Given
an approximate solution $(K_a,\mu_a)$ of the invariance equation \equ{invariance0} as in \equ{initial_invariance}
with error term $E$,
then find corrections  $\Delta$ and $\sigma$ to $K_a$, $\mu_a$, respectively,
in such a way that:
\begin{equation} \label{Newton_correction}
\Big(D f_{\mu_a,\eps} \circ K_a\Big) \Delta - \Delta \circ T_\omega +
\Big(D_\mu f_{\mu_a,\eps} \circ K_a\Big) \sigma  = -E\ .
\end{equation}
Then, it can be proved that
$K_a + \Delta, \mu_a + \sigma$ will satisfy the invariance
equation with a much better accuracy (in a slightly smaller domain), precisely one expects that the new
error will be controlled by the square of the old one.

Unfortunately, the equation \eqref{Newton_correction} is
not so easy to deal with because it has non-constant coefficients.
The idea of the automatic reducibility method is to find
an adapted frame of coordinates that takes advantage of the geometry of the
system. Let us introduce $M$ like in \equ{Mdef} with $K_0$ replaced by $K_a$:
$$
M = [ DK_a\, |\,  J^{-1} \circ K_a DK_a N]
$$
with $J$ the symplectic matrix and $N$ the normalization factor
defined like in \equ{Ndef}:
$$
N = \left( DK_a^\top DK_a \right)^{-1}.
$$

The geometry of $M$ is that it is  a change of basis in the tangent
space of the approximately invariant torus.
The remarkable thing is that, in this basis, $Df_{\mu_a,\eps}\circ K_a$
has a very simple expression, namely
\begin{equation} \label{reduction}
Df_{\mu_a,\eps} \circ K_a\ M =  M \circ T_\omega
\begin{pmatrix}
&  \Id  &S \\
& 0  & \lambda \Id\\
\end{pmatrix}  +  R\ ,
\end{equation}
where $R$, measuring the error of the automatic reducibility for an approximately invariant torus,
is a quantity that  can be estimated by $E$ (in the  sense of Nash-Moser, we allow that the estimates
are \emph{tame} estimates in a smaller domain of analyticity).

The geometric reason for the identity \eqref{reduction}
is that, if we take derivatives of \eqref{initial_invariance},
we obtain the first $d$ columns of \eqref{reduction}.  Geometrically,
we have found a vector field $DK_a$ that gets mapped into itself by the
transformation $Df_{\mu_a,\eps}\circ K_a$. The directions $J^{-1}\circ K_a DK_a N $
are the symplectic conjugates to $DK_a$ (here one uses the fact that the
torus is approximately Lagrangian, which is established as a consequence
that $K_a, \mu_a$ satisfy \eqref{initial_invariance}).

Using \eqref{reduction} and writing the correction $\Delta$ for
$K_a$ as $\Delta = M W$ for some function $W$, we see that,
ignoring a term containing the factor $R\,W$, equation
\eqref{Newton_correction} becomes
\begin{equation}\label{quasinewton}
\begin{pmatrix}
&  \Id  &S \\
& 0  & \lambda \Id\\
\end{pmatrix}  W - W \circ T_\omega + \left( M\circ T_\omega\right)^{-1}D_\mu f_{\mu_a,\eps} \circ K_a \sigma =
-\left( M\circ T_\omega\right)^{-1} E\ .
\end{equation}

Note that \eqref{quasinewton} is an equation for both $W$ and $\sigma$.
Similar equations appear in KAM theory all the time. Note that
\eqref{quasinewton} becomes a cohomology equation of the form
\eqref{twisted_cohomology} for the second component $W_2$ of $W$ and, once this is solved,
we substitute $W_2$ in the equation for the first component $W_1$, which is an equation of
the form \eqref{twisted_cohomology} with $\lambda=1$.

Writing \equ{quasinewton} in components (i.e. taking the first $d$ rows and
the last $d$ rows) we obtain
\beqa{c12}
W_1-W_1\circ T_\omega&=&-S\, W_2-\widetilde E_1-\widetilde A_1\sigma\nonumber\\
\lambda W_2-W_2\circ T_\omega&=&-\widetilde E_2-\widetilde A_2\sigma\ ,
\eeqa
where $\widetilde E\equiv (M\circ T_\omega)^{-1}E$, $\widetilde A\equiv (M\circ T_\omega)^{-1} (D_\mu f_{\mu_a,\eps}\circ K_a)$.
We write $\widetilde A$ as $\widetilde A=[\widetilde A_1|\widetilde A_2]$, where $\widetilde A_1$, $\widetilde A_2$
denote the first $d$ and the last  $d$ rows of the $2d\times d$ matrix $\widetilde A$.

The solution of equations \equ{c12} requires that
the average of the right hand side is zero, but this can be accomplished by properly choosing the quantity
$\sigma$, provided that the non--degeneracy condition \equ{non-degeneracy} is satisfied.

This procedure is very standard in KAM theory, but in this case it has a complication.
If we change $\sigma$, since it is multiplied by a non-constant function, we change the solutions
of the equations which we have to seek for the average ${\overline W}_2$ and hence (compare with Step 9 in
Algorithm~\ref{alg:step} below) we change the solution for the average in the
equation for ${\overline W}_1$. Therefore, the equations for ${\overline W}_2$ and $\sigma$ are not completely decoupled. The observation
in \cite{CallejaCL11} is that we know that the dependence of $W_2$ on $\sigma$ is
affine and that we can compute the coefficients by solving cohomology equations
for the zero average part. If we do so and substitute in
the equation for ${\overline W}_2$, we are led to a linear equation for $\sigma$ when we impose
that the averages of both sides in \eqref{c12} match.

To do this computation explicitly, let us take the average of \equ{c12}, which leads to solving the following equations
for $\overline{W}_2$ and $\sigma$:
\beqa{ave1}
0&=&-{\overline S}\ {\overline W}_2-{\overline {S(B_a)^0}}-{\overline {S(B_b)^0}}\,\sigma-\overline{\widetilde E_1}
-\overline{\widetilde A_1}\sigma\nonumber\\
(\lambda-1)\ {\overline W}_2&=&-\overline{\widetilde E_2}-\overline{\widetilde A_2}\sigma\ ,
\eeqa
where $B_a$ and $B_b$ are such that $(B_a)^0$ solves the equation
$\lambda (B_a)^0 - (B_a)^0 \circ T_\omega = - (\widetilde E_2)^0$, while
$(B_b)^0$ solves the equation $\lambda (B_b)^0 - (B_b)^0 \circ T_\omega = - (\widetilde A_2)^0$.
Let us write ${\overline W}_2=A+B\sigma$ for some unknowns $A$, $B$.
Then, from the second of \equ{ave1} we obtain
$$
A=-{{\overline{\widetilde E_2}}\over {\lambda-1}}\ ,\qquad B=-{{\overline{\widetilde A_2}}\over {\lambda-1}}\ ,
$$
which, substituted in the first of \equ{ave1}, gives $\sigma$.

In summary, we are led to the following algorithm (which is identical to Algorithm 33 of \cite{CallejaCL11}).

\begin{algorithm}\label{alg:step}
Given $K_a: \torus^d \to \M$, $\mu_a \in \real^d$,
let $\lambda\in \complex$ be the conformal factor for the mapping $f_{\mu,\eps}$. Perform the following computations:
\begin{itemize}
\item[1)] $E\gets f_{\mu_a,\eps} \circ K_a - K_a\circ T_\omega$
\item[2)] $\alpha \gets DK_a$
\item[3)] $N\gets [\alpha^\top \alpha]^{-1}$
\item[4)] $M\gets [\alpha\,|\, J^{-1}\circ K_a\alpha N]$
\item[5)] $\beta\gets (M\circ T_\omega)^{-1}$
\item[6)] $\widetilde E\gets \beta E$
\item[7)] $P\gets\alpha N$
\item[{}] $\gamma\gets \alpha^\top J^{-1}\circ K_a\alpha$
\item[{}] $S\gets (P\circ T_\omega)^\top Df_{\mu_a,\eps} \circ K_a\ J^{-1}\circ K_a P-\lambda (N\circ T_\omega)^\top(\gamma\circ T_\omega) (N\circ T_\omega)$
\item[{}] $\widetilde A\gets (M\circ T_\omega)^{-1}\ D_\mu f_{\mu_a,\eps} \circ K_a$
\item[8)] $(B_a)^0$ solves \quad
$\lambda (B_a)^0 - (B_a)^0 \circ T_\omega = - (\widetilde E_2)^0$,
\item[{}] $(B_b)^0$ solves \quad
$\lambda (B_b)^0 - (B_b)^0 \circ T_\omega = - (\widetilde A_2)^0$
\item[9)] Find ${\overline W}_2$, $\sigma$ solving
\beqano
0 &=& - \overline{S}\, \overline{W_2}
- \overline{ S (B_a)^0   }
- \overline{ S (B_b)^0   } \sigma
 - \overline{\widetilde E_1}
 - \overline{\widetilde A_1}\sigma\\
(\lambda -1) \overline{W_2}
&=& -\overline{\widetilde E_2} - \overline{\widetilde A_2 }\sigma
\eeqano
\item[10)] $(W_2)^0=(B_a)^0+\sigma (B_b)^0$
\item[11)]  $W_2=(W_2)^0+{\overline W}_2$
\item[12)] $(W_1)^0$ solves $\noaverage{W_1} -
\noaverage{W_1}\circ T_\omega = - \noaverage{S W_2}
 -\noaverage { \widetilde E_1} - \noaverage{\widetilde A_1} \sigma$
\item[13)] $K_a \gets K_a + MW$
\item[{}] $\mu_a \gets \mu_a +\sigma$\ .
\end{itemize}
\end{algorithm}

\begin{remark}\label{worksforcomplex}
\begin{itemize}
\item[$(i)$] It is important to note that Algorithm~\ref{alg:step}
involves only algebraic operations, compositions of derivatives and solving
cohomology equations
which work just as well when some of
the objects involved are complex. Indeed, in \cite{CallejaCL11} -- and
in good part of KAM theory -- many functions are defined in complex
extensions of the tori.
\item[$(ii)$] We note that, besides being the basis of the theoretical treatment in
\cite{CallejaCL11}, this is also a very practical algorithm, since each of the steps are
obtained applying standard algebraic manipulations, common in Celestial Mechanics.
Notice that one step achieves quadratic convergence, but it is required a low
storage and low number of operations.
\end{itemize}
\end{remark}

A consequence of Remark~\ref{worksforcomplex} is the following result.

\begin{cor}\label{isanalytic}
Consider that we give as input to the Algorithm~\ref{alg:step}
the members
$K_\eps, \mu_\eps$ of a family indexed by $\eps$, with $\eps$ ranging in a domain. Assume that the
non-degeneracy assumption holds in the domain.

If $K_\eps, \mu_\eps$ is an analytic (respectively continuous) family,
then the result of the algorithm is also an analytic (respectively continuous)
family.
\end{cor}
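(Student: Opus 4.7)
The plan is to verify that each of the thirteen operations in Algorithm~\ref{alg:step} preserves analytic (respectively continuous) dependence on $\eps$; since composition of analytic (resp. continuous) maps between Banach spaces is again analytic (resp. continuous), this proves the result at the level of a single Newton step. The corollary as stated is really this single-step statement; the convergence of the iteration is handled separately in Theorem~\ref{mainKAM}, and analyticity of the KAM limit then follows from Weierstrass's theorem because the iterates converge uniformly in the $\A_{\rho'}$ topology.

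First I would dispatch the elementary ingredients. Steps~1--7 and 10--13 involve only: (a) pointwise algebraic manipulations on $\A_\rho$-valued functions (sums, products, and matrix inversions of pointwise-invertible matrices), which preserve analyticity because $\A_\rho$ is a Banach algebra and matrix inversion is analytic on the open set of invertible matrices; (b) composition with the translation $T_\omega$, which is an isometry of $\A_\rho$; (c) differentiation $K\mapsto DK$, which by Proposition~\ref{prop:Cauchy} is a bounded linear operator $\A_\rho\to\A_{\rho-\delta}$; and (d) the compositions $f_{\mu,\eps}\circ K$ and $D f_{\mu,\eps}\circ K$, which depend analytically on $(\mu,\eps,K)$ because $f_{\mu,\eps}$ is jointly analytic on its complex domain and $K$ takes values in a fixed complex neighbourhood of its image (hypothesis {\bf H4}). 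Each of these operations manifestly sends an analytic (resp. continuous) $\A_\rho$-valued function of $\eps$ to an analytic (resp. continuous) $\A_{\rho'}$-valued function of $\eps$, at worst with a small loss of domain width. The inversion of the twist matrix in Step~9 is likewise analytic, since the non-degeneracy assumption \eqref{non-degeneracy} is supposed to hold throughout the parameter domain, so the twist matrix is invertible with locally bounded inverse.

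The only step requiring real work is the inversion of the twisted cohomology operator, which occurs in Step~8 and, with $\lambda=1$, in Step~12. Here I would use the explicit Fourier representation from the proof of Lemma~\ref{cohomology}: if $\lambda(\eps)\varphi(\eps)-\varphi(\eps)\circ T_\omega=\eta(\eps)$, then
\[
\widehat{\varphi}_k(\eps)=\bigl(\lambda(\eps)-e^{2\pi i k\cdot\omega}\bigr)^{-1}\widehat{\eta}_k(\eps),\qquad k\in\integer^d\setminus\{0\},
\]
where $\eta(\eps)$ has already been shown to be an analytic (resp. continuous) function of $\eps$ with values in $\A_\rho$ by the previous steps. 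For a fixed $\eps_0$ in the interior of the domain, each individual Fourier coefficient $\widehat\varphi_k(\eps)$ is analytic (resp. continuous) in a neighbourhood of $\eps_0$, because $\lambda(\eps)$ is analytic and the assumption $\nu(\lambda(\eps_0);\omega,\tau)<\infty$ forces $\lambda(\eps_0)\neq e^{2\pi i k\cdot\omega}$ for every non-zero $k$. Applying the tame estimate \eqref{tame} uniformly on a small neighbourhood of $\eps_0$ gives
\[
\|\varphi(\eps)-\varphi(\eps_0)\|_{\rho-\delta}\le C(\tau,d)\,\delta^{-\tau-d}\bigl(\nu(\lambda(\eps);\omega,\tau)\|\eta(\eps)-\eta(\eps_0)\|_\rho+|\lambda(\eps)-\lambda(\eps_0)|\,\text{(\dots)}\bigr),
\]
so the Fourier series converges in $\A_{\rho-\delta}$ uniformly on that neighbourhood. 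By Weierstrass's theorem on uniform limits of analytic Banach-valued functions, the limit $\varphi$ is analytic (resp. continuous) as an $\A_{\rho-\delta}$-valued function of $\eps$.

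I expect the only subtle point to be securing a \emph{uniform} bound on $\nu(\lambda(\eps);\omega,\tau)$ on a neighbourhood of each $\eps_0$ in the given domain; for this one uses that, by lower semi-continuity of $\nu$ in $\lambda$ (Remark~\ref{rem:semicontinuous}) together with the definition of $\G_{r_0}$, the quantity $\nu(\lambda(\eps);\omega,\tau)$ is locally bounded at every interior point of the domain of interest. With that uniform bound in hand, chaining together the thirteen steps yields the desired analytic (resp. continuous) dependence of the output of Algorithm~\ref{alg:step} on $\eps$.
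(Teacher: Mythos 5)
Your proof is correct and follows essentially the same approach as the paper, which disposes of this corollary via Remark~\ref{worksforcomplex}: one simply observes that every step of Algorithm~\ref{alg:step} consists of elementary operations (Banach-algebra manipulations, differentiation with domain loss, composition with the analytic $f_{\mu,\eps}$, matrix inversion under the non-degeneracy hypothesis, and the cohomology solver of Lemma~\ref{cohomology}) that preserve analytic/continuous dependence on $\eps$; you have merely filled in the details the paper leaves implicit. One small caution on your closing point: lower semi-continuity of $\lambda\mapsto\nu(\lambda;\omega,\tau)$ shows that $\G_{r_0}$ is closed, but by itself it does not give a local \emph{upper} bound on $\nu$ (indeed $\nu=\infty$ on a dense subset of the unit circle while remaining lower semi-continuous); the needed local bound on $\nu(\lambda(\eps);\omega,\tau)$ in fact comes directly from the defining inequality $\nu(\lambda(\eps);\omega,\tau)\,|\lambda(\eps)-1|^{N+1}\le A$ of $\G_{r_0}$ together with the fact that $|\lambda(\eps)-1|$ is locally bounded away from zero once $\eps$ is bounded away from the origin.
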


To obtain estimates for the iterative step described in Algorithm~\ref{alg:step}, we observe that the bounds for
the correction $(W,\sigma)$ remain very similar to the estimates in
\cite{CallejaCL11}. The main difference with the procedure in \cite{CallejaCL11} is that in Step 8, we
use the estimates of Lemma~\ref{cohomology}.

The estimates for the error in the step do not need any change from the treatment in \cite{CallejaCL11}, since they
just involve adding, subtracting, using the second order Taylor estimates
and estimating the neglected term involving $RW$.

After the estimates for the step are performed, we only need to check that the
iteration can proceed and yields the desired results. This is nowadays
quite standard and does not require any changes from the presentation in \cite{CallejaCL11} to which we refer the reader
for full details.

We conclude by mentioning that the proof of Lemma~\ref{lem:unique} about the uniqueness of the solution does not
require any change from Theorem 29 in \cite{CallejaCL11}.

\section{Proof of A) in Theorem~\ref{main}}\label{sec:Lindstedt}
In this Section, we prove part A of Theorem~\ref{main}, which amounts to showing
the existence of Lindstedt series to all orders. Let us start from the exact solution $(K_0,\mu_0)$ as in
\equ{K0}; since we assumed that $f_{\mu_0,0}$ is symplectic, we have that
$$
f_{\mu_0,0}^*\Omega=\Omega\ .
$$
Let $M_0=[DK_0\, |\, J^{-1}\circ K_0\, DK_0\, N]$, $N=(DK_0^\top DK_0)^{-1}$ and let
$S_0=S$ with $S$ as in \equ{Sdef}; then, one obtains
\beq{f0}
Df_{\mu_0,0}\circ K_0(\theta)\ M_0(\theta)=M_0(\theta+\omega)\
\left(
\begin{array}{cc}
  \Id & S_0(\theta) \\
  0& \Id \\
 \end{array}%
\right)\ .
\eeq
Let $K_\eps^{[\leq N]}=\sum_{j=0}^N\eps^j K_j$, $\mu_\eps^{[\leq N]}=\sum_{j=0}^N\eps^j \mu_j$;
inserting these power series expansions in the invariance equation \equ{invariance}, expanding
the series in $\eps$ and equating the coefficients of the same power of $\eps$,
we obtain recursive relations defining $K_j$ and $\mu_j$, as described below.

At the first order in $\eps$ we obtain the equations:
\beq{eps0}
(Df_{\mu_0,0}\circ K_0)K_1-K_1\circ T_\omega +(D_\mu f_{\mu_0,0}\circ K_0)\mu_1+D_\eps f_{\mu_0,0}\circ K_0=0\ ,
\eeq
while at the $j$--th order in $\eps$, $2\leq j\leq N$, we get:
\beq{epsj}
(Df_{\mu_0,0}\circ K_0)K_j-K_j\circ T_\omega +(D_\mu f_{\mu_0,0}\circ K_0)\mu_j=F_j(K_0,...,K_{j-1},\mu_0,...,\mu_{j-1})\ ,
\eeq
where $F_j$ is an explicit polynomial in its arguments with coefficients depending on the derivatives of $f_{\mu_\eps,\eps}$
computed at $\mu_\eps=0$, $\eps=0$ and composed with $K_0$. We note for future reference that
$$
F_j(K_0,...,K_{j-1},\mu_0,...,\mu_{j-1})=-{1\over {j!}}{{d^j}\over {d\eps^j}} f_{\mu_0,0}\circ K_0+
\tilde F_j(K_0,...,K_{j-1},\mu_0,...,\mu_{j-1})\ ,
$$
where $\tilde F_j$ does not depend on derivatives of order $j$ of $f_{\mu_\eps,\eps}$.
Equation \equ{eps0} is of the same kind of \equ{epsj}, since it suffices to define
$F_1(K_0,\mu_0)\equiv -{d \over d \eps} f_{\mu_0,0}\circ K_0$.

To solve \equ{epsj} (equivalently \equ{eps0}), we write $K_j(\theta)\equiv M_0(\theta)W_j(\theta)$
for a suitable function $W_j=W_j(\theta)$, so that \equ{epsj} becomes
$$
(Df_{\mu_0,0}\circ K_0)\ M_0 W_j-(M_0 W_j)\circ T_\omega+(D_\mu f_{\mu_0,0}\circ K_0)\mu_j=F_j(K_0,...,K_{j-1},\mu_0,...,\mu_{j-1})\ .
$$
Using \equ{f0} we obtain:
\beqa{star}
\left(
\begin{array}{cc}
  \Id & S_0(\theta) \\
  0& \Id \\
 \end{array}%
\right)W_j-W_j\circ T_\omega&+&(M_0\circ T_\omega)^{-1}\ (D_\mu f_{\mu_0,0}\circ K_0)\mu_j\nonumber\\
&=&(M_0\circ T_\omega)^{-1}\ F_j(K_0,...,K_{j-1},\mu_0,...,\mu_{j-1})\ .
\eeqa
Writing \equ{star} in components, again we recall that this means
taking the first $d$ rows and the last $d$ rows,
say $W_j=(W_{j1} | W_{j2})$, we get the following equations:
\beqa{coh}
W_{j2}-W_{j2}\circ T_\omega+[(M_0\circ T_\omega)^{-1}(D_\mu f_{\mu_0,0}\circ K_0)]_2\ \mu_j&=&[(M_0\circ T_\omega)^{-1} F_j]_2\nonumber\\
W_{j1}-W_{j1}\circ T_\omega+S_0W_{j2}+[(M_0\circ T_\omega)^{-1}(D_\mu f_{\mu_0,0}\circ K_0)]_1\ \mu_j&=&[(M_0\circ T_\omega)^{-1} F_j]_1\ ,
\eeqa
where $[\cdot]_1$, $[\cdot]_2$ denote the first and second component. Under the non--degeneracy assumption
{\bf HND} and provided $\omega\in\D_d(\nu,\tau)$, equations \equ{coh} can be solved to determine $W_{j1}$, $W_{j2}$, $\mu_j$,
according to the following procedure. Let
$$
\widetilde E_j\equiv (M_0\circ T_\omega)^{-1}F_j\ ,\qquad \widetilde A_0\equiv (M_0\circ T_\omega)^{-1}(D_\mu f_{\mu_0,0}\circ K_0)\ ;
$$
then, \equ{coh} becomes:
\beqa{cohj}
W_{j2}-W_{j2}\circ T_\omega+\widetilde A_{20}\mu_j&=&\widetilde E_{j2}\nonumber\\
W_{j1}-W_{j1}\circ T_\omega+\widetilde A_{10}\mu_j&=&\widetilde E_{j1}-S_0W_{j2}\ .
\eeqa
Taking the average of the first equation in \equ{cohj}, we obtain
\beq{S1}
\overline{\widetilde A_{20}}\ \mu_j=\overline{\widetilde E_{j2}}\ ,
\eeq
which determines $\mu_j$. Taking the average of the second equation, we obtain
\beq{A}
\overline{\widetilde A_{10}}\mu_j=\overline{\widetilde E_{j1}}-\overline{(S_0 W_{j2})}\ .
\eeq
Let $W_{j2}=\overline{W_{j2}}+(W_{j2})^0$; then, we have
\beq{S0W}
\overline{(S_0 W_{j2})}=\overline{S_0}\ \overline{W_{j2}}+\overline{S_0(W_{j2})^0}\ .
\eeq
Using that $W_{j2}$ is an affine function of $\mu_j$, we write
$(W_{j2})^0\equiv (B_{a0})^0+ (B_{b0})^0 \mu_j$, where $(B_{a0})^0$, $(B_{b0})^0$ are the solutions of the equations:
\beqa{B}
(B_{a0})^0-(B_{a0})^0\circ T_\omega&=&(\widetilde E_{j2})^0\nonumber\\
(B_{b0})^0-(B_{b0})^0\circ T_\omega&=&-(\widetilde A_{20})^0\ .
\eeqa
From \equ{A}, \equ{S0W} and \equ{B} we obtain:
\beq{S2}
\overline{\widetilde A_{10}}\mu_j+\overline{S_0}\ \overline{W_{j2}}+\overline{S_0 (B_{b0})^0}\ \mu_j =
-\overline{S_0 (B_{a0})^0}+\overline{\widetilde E_{j1}}\ .
\eeq
The equations \equ{S1} and \equ{S2} in the unknowns $\overline{W_{j2}}$ and $\mu_j$ can be written as
$$
\left(
\begin{array}{cc}
  \overline{S_0} & \overline{\widetilde A_{10}}+\overline{S_0 (B_{b0})^0} \\
  0& \overline{\widetilde A_{20}} \\
 \end{array}%
\right)
\ \left(\begin{array}{c}
          \overline{W_{j2}} \\
          \mu_j \\
        \end{array}
\right)=\left(\begin{array}{c}
          -\overline{S_0(B_{a0})^0}+\overline{\widetilde E_{j1}} \\
          \overline{\widetilde E_{j2}} \\
        \end{array}
\right)\ ,
$$
which can be solved to obtain $\overline{W_{j2}}$, $\mu_j$,
provided {\bf HND} is satisfied.  The proof is completed, once we
solve the equations \equ{cohj} for the non--average parts of $W_1$ and
$W_2$.  The equations \equ{cohj} are cohomological equations of the
form \equ{twisted_cohomology} with $\lambda=1$.

Assuming that we  have determined the functions $K_j$ and the terms $\mu_j$
for $1\leq j\leq N$, we obtain the finite sums
$K_\eps^{[\leq N]}$, $\mu_\eps^{[\leq N]}$, which solve the invariance
equation within an error given in \equ{formalpower}.

\section{Proof of B) in Theorem~\ref{main}}\label{sec:partB}

We start by considering an approximate solution, as provided by part A of Theorem~\ref{main}, namely
a solution $(K_\eps^{[\leq N]}, \mu_\eps^{[\leq N]})$, which can be expanded in formal power series in $\varepsilon$ as
$$
K_\eps^{[\leq N]}=\sum_{j=0}^N \eps^j K_j\ ,\qquad \mu_\eps^{[\leq N]}=\sum_{j=0}^N \eps^j \mu_j\ ,
$$
and which satisfies the bound \equ{formalpower}.

Let $A>0$ and let $\eps_0\in\G_{r_0}(A)$ with the
 set $\G_{r_0}$ as in \equ{Gr0}, where the
cohomological equations can be solved. Assume that $\eps$ belongs to a sufficiently small ball
$\B$ centered in $\eps_0$, for example
\beq{eps}
|\eps-\eps_0|\leq 10^{-6}\ dist(\eps_0,\partial \G_{r_0}(A))\ .
\eeq
By the choice of $\eps$ in this subset of $\G_{r_0}(A)$, all the assumptions stated in Theorem~\ref{mainKAM} are satisfied.
In particular, the Diophantine condition {\bf H1} is required also in Theorem~\ref{main}. The approximate
solution in {\bf H2} is provided by the choice $(K_a,\mu_a)=(K_\eps^{[\leq N]}, \mu_\eps^{[\leq N]})$; the associated error term
can be assumed to be sufficiently small as in {\bf H5} thanks to the inequality \equ{formalpower}
and the assumption \equ{eps}, provided $r_0$ is sufficiently small.
We recall that due to {\bf H$\lambda$} we have $\lambda(\eps)-1=\alpha\eps^a+O(|\eps|^{a+1})$
for some $a\in\integer_+$, $\alpha\in\complex$, so that
the right hand side of \equ{formalpower} can be estimated by a power of $|\lambda(\eps)-1|$ as
$$
|| f_{\mu_\eps^{[\le N]}, \eps} \circ K^{[\le N]}_\eps -  K^{[\le N]}_\eps \circ T_\omega ||_{\rho'} \le C_N' |\lambda-1|^{{N+1}\over a}
$$
for some constant $C_N'>0$. By taking $\lambda$ close to 1, we obtain the bound in {\bf H5} on the error term.

Using the fact that the determinant is a continuous function,
the non-degeneracy assumption {\bf H3} is implied
by {\bf HND}, provided {\bf H}$\lambda$ is satisfied and $r_0$ is sufficiently small.

In conclusion, all assumptions required in Theorem~\ref{mainKAM} are satisfied and Theorem~\ref{mainKAM} allows one
to state that there exists an exact solution of the invariance equation \equ{isoslution}.

The conditions of Theorem~\ref{mainKAM} are verified uniformly and
therefore the sequence of approximate solutions constructed in the
proof of Theorem~\ref{mainKAM} converges uniformly to the true
solution $(K_\eps,\mu_\eps)$ satisfying \equ{isoslution}. Such families
of functions
will be analytic  in $\eps$ in the interior
of $\G_{r_0}$ and continuous in all of $\G_{r_0}$  as a consequence of Corollary~\ref{isanalytic}.

Due to the construction of the exact solution, the inequalities \equ{Kmu} are satisfied.

We conclude by mentioning that the error is as small as we want for $\eps$ small and $N$ large enough, and that the bounds
on the constants depend on the definition of the set \equ{eps}.
As remarked before, we can ensure that the solution satisfies the normalization \equ{normalized};
such normalized solutions are locally unique.

\section{Further geometric properties of the sets  $\G$, $\Lambda$}\label{sec:geometricset}

In this Section we formulate two geometric properties of
the set $\Lambda$ defined in
\eqref{Lambdadefined}.  From this, one can obtain properties of
the set $\G$  defined in \eqref{goodset}, since it is obtained from $\Lambda$
by a conformal transformation.

As a matter of fact, we will also consider the set
$$
\Upsilon(A; \omega, \tau)=
\{ \lambda \in  \complex  : \quad
\nu(\lambda; \omega, \tau) \le A \}\ ,
$$
which is easier to study. Clearly we have that in a neighborhood of
the unit circle
\[
\Lambda(A; \omega, \tau, N) \supset \Upsilon(2.1^{-(N+1)} A, \omega, \tau)\ .
\]
This inclusion is a not very accurate estimate
near $\eps = 1$, since we are ignoring the
factor $|\lambda - 1|^{N+1}$. On the other hand, for properties
over the whole unit circle, this is a good estimate.

We observe that, since the number $\nu$ is the supremum of
several quantities, the sublevel sets are
obtained by removing the sets where one of the inequalities required
in the definition of $\Upsilon$ fails.
We observe that the places where one of the inequalities fails can be
bounded from above by a ball and we can obtain a lower bound for
the set $\Upsilon$ by removing from the plane balls which enclose
the region where one of the inequalities fails.
These inequalities, and hence the excluded balls, are indexed by $k\in\integer^d\backslash\{0\}$.
The radii of the excluded balls decrease as $|k|$ grows.

Of course, the set  $\Upsilon $
contains open balls outside the unit circle. Hence
the interesting question consists in studying the properties of density of $\Upsilon $ on the unit circle.

In this Section we will establish  two geometric properties of the set
$\Lambda$: the first one states that one
is a point of density for the set $\Lambda$, both as
subset of the complex and also restricted to the unit circle (see
Proposition~\ref{density}); this result is independent of $A$.
The second property (see Proposition~\ref{pro:tang}) shows
that the set of points that are \emph{tangentially accessible}
(see Definition~\ref{tangentiallyaccessible} and compare with \cite{Caratheodory1},
\cite{Caratheodory2}, \cite{Caratheodory3}) is also of large measure near one.
Both results are proved by the standard argument in
Diophantine approximation theory by estimating the measure of the
excluded balls.

Later on, in Section~\ref{sec:improved}, we will show that the set $\G$ can be improved
to another set $\widetilde \G$, which is tangentially accessible in more points
(compare with Proposition~\ref{prop:extension}).

\begin{definition}\label{tangentiallyaccessible}
Let $\C$ be a complex domain. We say that a point $\lambda_0 \in  \C$ is
tangentially accessible in $\C$, when there exists a unit complex
number $u$ (denote by $\bar u$ its complex conjugate)
and there exist $\delta > 0$, $\gamma > 0$,  $m \ge 2$, such that
\[
\Gamma(\lambda_0, m;\gamma, \delta)\equiv \{ \lambda_0 + t u + s \bar u \,
:\ |t|, |s| <  \delta,\quad s \ge \gamma |t|^m \}
\subset \C\ .
\]
We say that a point is tangentially accessible from both sides  in $\C$,
when
\[
\Gamma^+(\lambda_0, m;\gamma, \delta) \equiv \{ \lambda_0 + t u + s \bar u \,
:\ |t|, |s| <  \delta,\quad |s| \ge \gamma |t|^m \}
\subset \C\ .
\]
When we need to me more precise, we can talk about an $m$-tangentially
accessible point.
\end{definition}

Note that, clearly if a point $\lambda \in \complex$
 is tangentially accessible for
a set $\Upsilon$ and $\Upsilon \subset \Lambda$, then
$\lambda$ is tangentially accessible for $\Lambda$.

The property of being tangentially accessible is, of course, only relevant for the
points in the boundary. It means that we can get regions  bounded
by parabolas tangent to the point inside the domain.
If the boundary is
given by a differentiable curve, all the points are tangentially accessible.
Also, when we transform a domain by a differentiable mapping, all the
tangentially accessible points for the original domain get
mapped into tangentially accessible points for the image.

The fact that a point is tangentially accessible has important
consequences. For example, in
\cite{Sokal,Hardy} it is shown that the asymptotic expansions based at that point can
be Borel summed and determine uniquely the function in the sector;
the book \cite{Caratheodory2} contains several other properties, which are a consequence
of accessibility.

In our case, the points in the boundary of $\Lambda$ are tangentially accessible from
both sides. We also note that the asymptotic expansions constructed
in Section~\ref{sec:Lindstedt} are defined on both sides of the domain. It then
follows that we can continue the functions in a unique way across
these points.

\begin{proposition}\label{pro:tang}
Consider  $\omega  \in \D_d(\nu, \sigma )$ and assume that for
some $m\geq 2$ the following inequality holds:
\begin{equation}\label{sigmacondition}
\sigma > m d\ .
\end{equation}

Then, almost all points in the unit circle are $m$-tangentially accessible for $ \cup_A \Upsilon(A; \omega,  \sigma)$.

In particular, by taking $A$ sufficiently small, we can get a set of $m$-tangentially
accessible points, whose complement has measure as small as desired.
\end{proposition}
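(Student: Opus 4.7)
The plan is to describe the complement of $\Upsilon(A;\omega,\sigma)$ explicitly as a union of balls and then estimate how much of the unit circle fails to be $m$-tangentially accessible, by a standard Borel--Cantelli argument in Diophantine approximation. Concretely, by unfolding the definition of $\nu(\lambda;\omega,\sigma)$, the complement $\Upsilon(A;\omega,\sigma)^c$ is the union over $k\in\integer^d\setminus\{0\}$ of the open balls $B_k$ centered at $e^{2\pi i k\cdot\omega}$ of radius $r_k = A^{-1}|k|^{-\sigma}$. Fix a point $\lambda_0=e^{2\pi i \theta_0}$ on the unit circle and choose $u$ to be the unit tangent to the circle at $\lambda_0$ (so that, with $\bar u$, the vectors span the plane and $\Gamma(\lambda_0,m;\gamma,\delta)$ lies to one side of the tangent line, and we may arrange it to lie inside the disk). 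We want to find $\gamma,\delta>0$ so that $\Gamma(\lambda_0,m;\gamma,\delta)$ avoids every $B_k$.

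The key geometric step is the following elementary observation: the center $e^{2\pi i k\cdot\omega}$ lies on the unit circle at tangential (arc) distance approximately $t_k(\theta_0)\equiv 2\pi\,\|k\cdot\omega-\theta_0\|_\torus$ from $\lambda_0$, where $\|\cdot\|_\torus$ denotes the distance to the nearest integer. Using the quadratic approximation to the unit circle at $\lambda_0$, the ball $B_k$ of radius $r_k$ can intersect $\Gamma(\lambda_0,m;\gamma,\delta)$ only if both $|t_k(\theta_0)|\lesssim \delta+r_k$ (so that $B_k$ reaches the $\delta$-box of the region at all) and $\gamma |t_k(\theta_0)|^m \lesssim r_k$ (so that the $\bar u$-height of $B_k$ exceeds the parabolic barrier $s=\gamma|t|^m$ above it). The second condition is the binding one for large $\gamma$ and translates into
\[
\|k\cdot\omega-\theta_0\|_\torus \;\lesssim\; (A\gamma)^{-1/m}\,|k|^{-\sigma/m}\,.
\]

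Let $\B_{A,\gamma}\subset\torus$ denote the set of $\theta_0$ for which at least one $k$ satisfies the above inequality. Then the one-dimensional Lebesgue measure of $\B_{A,\gamma}$ is bounded by a constant times
\[
(A\gamma)^{-1/m}\sum_{k\in\integer^d\setminus\{0\}}|k|^{-\sigma/m}\,,
\]
and the series converges precisely when $\sigma/m>d$, which is the hypothesis \eqref{sigmacondition}. Hence, for every fixed $A$, the measure $|\B_{A,\gamma}|$ tends to $0$ as $\gamma\to\infty$. A point $\theta_0$ in $\torus\setminus\bigcap_{\gamma>0}\B_{A,\gamma}$ is, by construction, $m$-tangentially accessible for $\Upsilon(A;\omega,\sigma)\subset\cup_{A'}\Upsilon(A';\omega,\sigma)$. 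Since this exceptional intersection has measure zero, almost every $\lambda_0$ on the unit circle is $m$-tangentially accessible for $\cup_{A}\Upsilon(A;\omega,\sigma)$. The second, quantitative assertion then follows at once from the bound $|\B_{A,\gamma}|\lesssim (A\gamma)^{-1/m}$: given any $\eps>0$, by taking the product $A\gamma$ large enough the complement in $\torus$ of the $m$-tangentially accessible points of $\Upsilon(A;\omega,\sigma)$ has measure at most $\eps$.

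The main obstacle is really the elementary geometric lemma in the middle step, namely verifying that the quadratic bending of the unit circle at $\lambda_0$ does not invalidate the clean tangential/normal picture used to derive the condition $\gamma|t_k|^m\lesssim r_k$. Once this is made precise with explicit constants (which amounts to a Taylor expansion of the circle near $\lambda_0$ and a comparison with the defining inequality $s\ge\gamma|t|^m$ of $\Gamma$), the remainder of the proof is a routine measure sum controlled by the series $\sum_k|k|^{-\sigma/m}$ and a Borel--Cantelli-type passage $\gamma\to\infty$.
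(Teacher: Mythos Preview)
Your proof is correct and follows essentially the same route as the paper: you identify the complement of $\Upsilon(A;\omega,\sigma)$ as the union of balls $B_k$ of radius $A^{-1}|k|^{-\sigma}$ centered at $e^{2\pi i k\cdot\omega}$, observe that a point $\lambda_0$ on the unit circle fails to admit a tangential region $\Gamma(\lambda_0,m;\gamma,\delta)$ only if it lies within distance $\sim(\gamma^{-1}A^{-1}|k|^{-\sigma})^{1/m}$ of some center, and then sum these lengths over $k$ using $\sigma>md$ to get a bad set of measure $O((A\gamma)^{-1/m})$ that vanishes as $\gamma\to\infty$. The paper's proof is slightly terser about the geometric step (it simply asserts the distance threshold without your Taylor-expansion discussion of the circle's curvature), but the argument is the same.
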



\begin{proof}
Remember that an upper bound for the  set $\Upsilon$ is obtained by removing balls centered at
$e^{2 \pi i \omega \cdot k}$ of radius approximately equal to $A^{-1} |k|^{-\sigma}$.

We observe that for all the points in the unit
circle that are at a distance
bigger than
$C(\gamma^{-1}A^{-1} |k|^{-\sigma})^{1/m}$ from $e^{2 \pi i \omega \cdot k}$
for some positive constant $C$, we can find a domain of
the form $\Gamma(\alpha, m)$, which does not touch the excluded
ball centered at $e^{2 \pi i \omega\cdot k}$.

Therefore, for all points in a subset of the unit circle whose complement has measure less
than $C(\gamma^{-1} A^{-1} |k|^{-\sigma})^{1/m}$, the condition
imposed by the ball corresponding to $k$ is not an impediment for
being $m$-tangentially accessible from both sides.

It then follows that the set of points that satisfy the conditions
imposed for all $k$ to be $m$-tangentially accessible from both sides
has a complement whose measure is less than
\[
\sum_{k \in \integer^d \setminus \{0\}}
2 C(\gamma^{-1} A^{-1} |k|^{-\sigma})^{1/m} \le C' \gamma^{-1/m} A^{-1/m}  \sum_{\ell \in \nat}
\ell^{ d -1 - \sigma/m}
\]
for some constant $C'$; the factor 2 at the l.h.s. takes into account
that we consider both sides of tangential accessibility.  We see that
under the condition \eqref{sigmacondition}, the sum above is finite
and, by choosing the constant $\gamma$ large enough, we can obtain
that the complement of the $m$-tangentially accessible points in
$\Lambda(A;\omega)$ has measure  as small as desired.
\end{proof}

\begin{proposition} \label{density}
Assume that $\omega \in \D_d(\nu, \tau)$ with $2\tau > d$ and that $N
\ge 1$.  The point $\lambda = 1$ is a point of density for the set
$\Lambda \subset \complex$ (with the two-dimensional Lebesgue
measure).  If $\tau > d $, then $1$ is a point of density for
$\Lambda \cap \mathbb{S}^1$ (with the one-dimensional Lebesgue
measure).
\end{proposition}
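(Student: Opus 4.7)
The plan is to establish density at $\lambda=1$ in both the two-dimensional and one-dimensional settings by bounding the measure of the excluded sets $\R_k = \{\lambda : |e^{2\pi i k\cdot\omega}-\lambda| < A^{-1}|k|^{-\tau}|\lambda-1|^{N+1}\}$ whose union is $\Lambda^c$, and showing that near $\lambda=1$ their total contribution is negligible compared to the reference measure of a disk, respectively an arc.

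For the two-dimensional statement, I would reuse the reduction already observed in Section~\ref{geometry}: in the dyadic annulus $A_\rho=\{\rho<|\lambda-1|<2\rho\}$ each $\R_k$ is contained in a disk centered at $e^{2\pi i k\cdot\omega}$ with radius at most $C A^{-1}|k|^{-\tau}\rho^{N+1}$ (obtained by replacing the implicit factor $|\lambda-1|^{N+1}$ by its upper bound $(2\rho)^{N+1}$ on $A_\rho$). A union bound then gives
\[
|\Lambda^c\cap A_\rho| \le \pi C^2 A^{-2}\rho^{2(N+1)}\sum_{k\in\integer^d\setminus\{0\}} |k|^{-2\tau} = O(\rho^{2(N+1)}),
\]
where the series converges precisely because $2\tau>d$ (the number of lattice points with $|k|=\ell$ grows like $\ell^{d-1}$, so one needs $d-1-2\tau<-1$). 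Decomposing the punctured disk as $B_r(1)\setminus\{1\}=\bigcup_{n\ge 0}A_{r/2^{n+1}}$ and summing the resulting geometric series yields $|\Lambda^c\cap B_r(1)|=O(r^{2(N+1)})$, so the density ratio is $O(r^{2N})$, which tends to $0$ as $r\to 0$ whenever $N\ge 1$.

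For the one-dimensional statement I would parameterize $\mathbb{S}^1$ near $1$ by $\theta\mapsto e^{i\theta}$ and use $|e^{i\phi}-1|\asymp|\phi|$ for small $\phi$. The defining inequality of $\R_k$ then restricts to the requirement $|2\pi k\cdot\omega-\theta\bmod 2\pi|\lesssim |k|^{-\tau}|\theta|^{N+1}$, and so for each $k$ it excises from $\{|\theta|<r\}$ an arc of length at most $C|k|^{-\tau}r^{N+1}$. Summing,
\[
|\Lambda^c\cap\mathbb{S}^1\cap\{|\theta|<r\}|\le C r^{N+1}\sum_{k\in\integer^d\setminus\{0\}}|k|^{-\tau}=O(r^{N+1}),
\]
the series converging under the sharper hypothesis $\tau>d$. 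Dividing by the arc length $2r$ gives a ratio $O(r^N)\to 0$ for $N\ge 1$.

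The only step requiring genuine geometric care is the linearization of the implicit inequality defining $\R_k$ in the annulus (resp.\ on the arc), which reduces $\R_k$ to a disk (resp.\ arc) of radius depending only on $k$ and $\rho$ and not on $\lambda$ itself; this is precisely the remark already made in Section~\ref{geometry}. Everything else is a routine union bound combined with a lattice sum $\sum_k|k|^{-s}$, whose convergence is built into the hypotheses $2\tau>d$ and $\tau>d$. The extra factor $|\lambda-1|^{N+1}$ from the definition of $\Lambda$ gives an $N$-fold gain in the density ratio, so the assumption $N\ge 1$ provides the decisive margin; no sharper use of the Diophantine lower bound $|e^{2\pi ik\cdot\omega}-1|\ge\nu^{-1}|k|^{-\tau}$ is needed, although it would refine the exponents further.
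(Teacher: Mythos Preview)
Your argument is correct and follows the same overall strategy as the paper (an excluded-measure union bound over the sets $\R_k$, carried out on dyadic annuli around $\lambda=1$). The one substantive difference is that the paper exploits the Diophantine hypothesis on $\omega$ itself: if $\lambda\in\R_{k,\rho}$ then $|e^{2\pi ik\cdot\omega}-1|\le 3\rho$, which forces $|k|\ge C\rho^{-1/\tau}$, so the sum may be restricted to large $|k|$ and yields the sharper bound $C\rho^{2(N+1)+(2\tau-d)/\tau}$ (respectively $C\rho^{(N+1)+(\tau-d)/\tau}$ on $\mathbb{S}^1$). You instead sum over all $k\ne 0$, relying only on the convergence of $\sum_k|k|^{-2\tau}$ (resp.\ $\sum_k|k|^{-\tau}$), which is exactly what the hypotheses $2\tau>d$ and $\tau>d$ guarantee. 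Your route is a bit more elementary and still sufficient for density, since $N\ge 1$ already gives the needed margin; the paper's use of the Diophantine lower bound buys only the improved exponents that you correctly flag as optional in your final paragraph.
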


\begin{proof}
This is a standard excluded measure argument. Fix $\rho>0$ sufficiently small.
In the following, we will not specify the constants that we generically denote as $C$.
Let $\C_\rho=\{\lambda\in\complex\,:\ \rho < | \lambda -1 | < 2 \rho\}$.

The set $\Lambda^c \cup \C_\rho$   is the union of the sets
\[
\R_{k,\rho} = \{\lambda \in \complex\,:\  |e^{2 \pi i k\cdot\omega} - \lambda|  <
A^{-1} |k|^{-\tau}|\lambda-1|^{N+1},\  \rho < | \lambda -1 | < 2 \rho \}\ .
\]
Note that, in particular, if there is a point $\lambda$ satisfying the
two conditions defining $\R_{k,\rho}$, we have
\[
|e^{2 \pi i k \cdot \omega} -1  | \le A^{-1} |k|^{-\tau}\ 2^{N+1}\ \rho^{N+1} +  2 \rho \le 3 \rho\ ,
\]
where the last inequality holds for $\rho$ small enough. Since $\omega$ is Diophantine, this implies that
$|k| \ge C  \rho^{-{1\over \tau}}$.

Now, for a fixed $k$ we see that the set of points $\lambda$ for
which  $|e^{2 \pi i k \cdot \omega} - \lambda| \le 2^{N+1}\ A^{-1} \rho^{N+1} |k|^{-\tau} $
is a circle with area smaller than $C A^{-2} \rho^{2 (N+1)} |k|^{-2\tau}$.

Hence the total area excluded is less than:
\[
\begin{split}
\sum_{k\in\integer^d\backslash\{0\},|k| \ge C \rho^{-{1\over\tau}}}
C  A^{-2}  \rho^{2(N+1)} |k|^{-2 \tau}&\le
\sum_{j\in\integer,|j|  \ge C \rho^{-{1\over\tau}}}
C A^{-2}  \rho^{2(N+1)}  |j|^{-2\tau + d -1} \\
&\le C A^{-2} \rho^{2(N+1)}  \rho^{\frac{1}{\tau}( 2 \tau - d)}\ .
\end{split}
\]

We observe that, under the hypothesis $2\tau>d$
the measure of the regions excluded in an annulus of
inner radius $\rho$ and outer radius $2\rho$ is
less than
\[
C A^{-2} \rho^{2(N+1) + {1\over \tau} (2\tau -d)}\ .
\]
Hence, the excluded measure in the ball of radius $\rho$
around the origin is bounded by a power greater than $2$. Indeed
the power is arbitrarily large if $N$ is large enough.

The argument for the measure excluded in the unit circle is
extremely similar.  The only thing we have to change is to
estimate the length of the  excluded intervals rather than the
area of the balls.  We obtain
an estimate for the excluded length less than
\[
C A^{-2} \rho^{(N+1) + {1\over \tau} (\tau -d)}\ .
\]

\end{proof}

\section{A conjecture on the optimality of the results}\label{sec:optimal}

The domain of analyticity of the embedding $K_\eps$ and the parameter
$\mu_\eps$ can be optimized in a variety of different ways; for
example, the constants in the cohomology equations can be sharpened,
the order of the expansion can be taken to optimize the results,
by exercising more care in the method presented here or, perhaps,
developing a new method, etc.
Indeed in Section~\ref{sec:improved} we will present an improved domain
of analyticity.
Nevertheless, in this Section we want to
argue that the shape of the domains established in
Section~\ref{geometry} is essentially optimal in the following sense.

\begin{conjecture}\label{isoptimal}

Consider a generic family of mappings $f_{\mu,\eps}$, satisfying the
hypotheses of Theorem~\ref{main}.  Consider the solution
$(K_\eps,\mu_\eps)$ produced in Theorem~\ref{main} and the maximal
domain where such solution is defined. Then, there exists a sequence
of balls $\B_k$ in the complex plane, which are not included in the
maximal domain of analyticity.

The balls $\B_k$ with centers in $\eps_k$, such that $\lambda(\eps_k)=e^{2\pi ik\cdot \omega}$,
correspond to the balls excluded in the definition of $\G$ (see \equ{goodset}).
\end{conjecture}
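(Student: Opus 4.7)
The plan is to establish the conjecture by locating, at each resonant center $\eps_k$ satisfying $\lambda(\eps_k)=e^{2\pi i k\cdot\omega}$, an explicit analytic obstruction carried by a single Fourier mode of the invariance equation, and then translating this obstruction into a forbidden disk $\B_k$ via Cauchy estimates. The strategy mirrors the analysis of resonant bifurcations in small-divisor problems.

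First I would suppose, for contradiction, that the family $\eps\mapsto(K_\eps,\mu_\eps)$ extends analytically (in the $\A_\rho\times\complex^d$ topology) through a resonant point $\eps_k$. Differentiating the invariance equation \equ{invariance} in $\eps$ at $\eps=\eps_k$ yields a linear equation for $\partial_\eps K_\eps|_{\eps_k}$ and $\partial_\eps\mu_\eps|_{\eps_k}$ driven by $\partial_\eps f$ and by the fixed torus $K_{\eps_k}$. Using the automatic reducibility identity \equ{reduction} at $\eps_k$, this linearized equation decouples Fourier mode by Fourier mode into scalar relations of the form $(e^{2\pi i \ell\cdot\omega}-\lambda(\eps_k))\,\hat W_{2,\ell}=\hat G_\ell$, whose $\ell=k$ instance is singular. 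Hence solvability forces a codimension-one resonance identity $\hat G_k=0$ on the data $(f,\omega,K_{\eps_k},\mu_{\eps_k})$.

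Next I would show that this identity fails on a residual set of families. Fixing a Banach topology on analytic families $f_{\mu,\eps}$ compatible with the hypotheses of Theorem~\ref{main}, one constructs for each relevant $k$ an explicit perturbation---for instance, adding a small $\eps^a$-multiple of a conformally symplectic map generated by $\cos(2\pi k\cdot x)$---whose effect on $\hat G_k$ is transversal. The set where $\hat G_k=0$ then has empty interior, and a countable intersection over $k$ via the Baire category theorem yields the advertised generic set. For a family in this set, the failure of solvability at $\eps_k$ forces $\widehat{K_\eps}(k)$, viewed as a function of $\eps$, to exhibit a pole-like singularity at $\eps_k$; this prevents analytic continuation of $K_\eps$ across $\eps_k$, and any disk containing $\eps_k$ therefore fails to lie in the maximal analyticity domain. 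Matching this to the size of the ball excluded in \equ{goodset} is done by combining a quantitative estimate on the strength of the singularity with the Cauchy bound $|\widehat{K_\eps}(k)|\le\|K_\eps\|_\rho\,e^{-2\pi|k|\rho}$ of Proposition~\ref{prop:Cauchy} and the a priori bound on $\|K_\eps\|_\rho$ provided by Theorem~\ref{main}.

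The hard part will be rigorously decoupling the resonant Fourier mode from the nonlinear coupling in the full invariance equation. The harmonics $jk$ with $j\ge 2$ are themselves critical when $\lambda(\eps)^j$ approaches $e^{2\pi i jk\cdot\omega}$, and nonlinear convolutions among these modes could in principle cancel the naive obstruction carried by $\hat G_k$. Overcoming this would require either recasting $\hat G_k$ as a manifestly gauge-invariant Melnikov-type integral along the unperturbed torus, so that transversality under perturbation becomes automatic, or performing a careful small-divisor bookkeeping showing that the nonlinear contributions remain strictly subdominant in $|\eps-\eps_k|$. This technical issue is, I believe, precisely the reason the statement appears here only as a conjecture, with the weaker Proposition~\ref{destruction} proved in its place.
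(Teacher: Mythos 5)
This is a conjecture, not a theorem, and the paper itself does not prove it; it offers heuristic support and a proved weaker statement, Proposition~\ref{destruction}. So the appropriate comparison is between your sketch and the paper's motivating argument. Your sketch is in the same spirit: you identify exactly the same obstruction, namely the twisted cohomology equation $\lambda W_2 - W_2\circ T_\omega=\text{RHS}$ acquiring a kernel at the $k$-th Fourier mode once $\lambda(\eps_k)=e^{2\pi i k\cdot\omega}$, and you correctly flag that the nonlinear feedback among harmonics is what keeps the full genericity statement out of reach, which is indeed why the paper states it only as a conjecture. Where you diverge technically: you differentiate the invariance equation in $\eps$ directly, while the paper deliberately introduces an auxiliary deformation parameter $\gamma$ with $f_{\mu,\eps,0}=f_{\mu,\eps}$, fixes $\eps=\eps_k$, and shows there is no formal $\gamma$-expansion there (Proposition~\ref{destruction}). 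The $\gamma$-device is not cosmetic: it decouples the deformation from the dynamics, so that the order-$j$ solvability conditions involve $\partial_\gamma^j f|_{\gamma=0}$, which can be prescribed freely, whereas in your $\eps$-differentiation the ``driver'' $\partial_\eps f$ is locked to the family and there is no free perturbation knob at all for a fixed $f_{\mu,\eps}$.

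Beyond that structural difference, there are two genuine gaps in your sketch that go past the obstacle you already acknowledge. First, when you perturb the family to make $\hat G_k\neq 0$, the quantity $\hat G_k$ is computed from $(K_{\eps_k},\mu_{\eps_k})$, which is itself an implicit function of the family; showing transversality therefore requires controlling how the exact torus deforms, not just how $f$ deforms. The paper's $\gamma$-scheme, with order-$j$ conditions on $\partial_\gamma^j f$ acting on \emph{different} Taylor coefficients, is precisely an attempt to sidestep this implicit dependence, and even so the paper says explicitly that the instability proved is ``somewhat weaker than what is needed'' for the conjecture. Second, the step from ``first-order solvability fails generically'' to ``$\widehat{K_\eps}(k)$ has a pole-like singularity at $\eps_k$'' is unjustified: failure of a formal expansion does not by itself pin down the analytic type of the singularity (it could be a branch point, an essential singularity, or the solution could simply fail to exist on one side), and the subsequent Cauchy-estimate matching of the ball radius to \eqref{goodset} would depend on precisely that unproved quantitative control. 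In short, your outline captures the correct mechanism and the correct hard point, but the genericity/transversality step and the singularity-type step would both need new ideas; this is consistent with the paper treating the statement as open and proving only Proposition~\ref{destruction}.
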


We note that if Conjecture~\ref{isoptimal} were
true, it would have important consequences for the analytic
properties of the asymptotic expansions.

The conjectured analyticity domains do not contain sectors centered at
the origin with aperture bigger than $\pi/a$. When $a> 1$, we cannot
use the Phragm\'en-Lindel\"of principle (\cite{PhragmenL08},
\cite{SaksZ65}, \cite{Hardy}) to obtain that the Taylor asymptotic expansion
determines the function. Indeed, one could get non-trivial analytic
functions with zero asymptotic expansion. It is not clear whether any
method of summation based only on the expansion produces the correct
result solving the functional equation.

The main reason for stating Conjecture~\ref{isoptimal} is an
argument which goes by contradiction, already used in \cite{CCCL2015}
to which we refer for more details.  We recall that the
drift $\mu$, together with the embedding, is an unknown of the
problem. Then, we extend the one-parameter family $f_{\mu,\eps}$, depending
on the parameter $\eps$, to a family
$f_{\mu,\eps,\gamma}$ depending also on a parameter $\gamma$, such
that $f_{\mu,\eps,0}=f_{\mu,\eps}$.

Then, we proceed as follows:

\begin{enumerate}
\item[$(i)$]
we assume that $f_{\mu,\eps,\gamma}$ is analytic in the parameters $\eps$, $\gamma$;
then, under suitable resonance conditions on the conformally symplectic factor
(see Proposition~\ref{destruction} below), that ensure that $\lambda(\eps_k)=e^{2\pi ik\cdot\omega}$ for
an infinite sequence $\eps_k$, $k\in\integer^d\backslash\{0\}$, we claim that there is no family of functions
analytic in $\gamma$ for $\gamma$ small near $\varepsilon_k$;
\item[$(ii)$]
by recalling a result stated in \cite{CCCL2015}, we show that if we have solutions $f_{\mu,\eps}$
in the space of analytic functions for all $f_{\mu,\eps}$ in a neighborhood,
there has to be a family $f_{\mu,\eps,\gamma}$ analytic in $\gamma$
for $\gamma$ small;
\item[$(iii)$]
the consequence of the above two statements is that
in every neighborhood, there has to be a family $f_{\mu,\eps}$,
which is analytic in the neighborhood of the resonant points where $\lambda(\eps_k)=e^{2\pi ik\cdot\omega}$.
\end{enumerate}

The result in $(i)$ is given by the following Proposition.


\begin{proposition}\label{destruction}
Let the family of mappings $f_{\mu,\eps}$ satisfy the hypotheses of Theorem~\ref{main}.
Assume that there exists an analytic solution $(K_\eps,\mu_\eps)$, solving the invariance
equation \equ{invariance}. Let $f_{\mu,\eps,\gamma}$ be a generic family, which is
analytic in $\eps$, $\gamma$, with conformally symplectic factor $\lambda=\lambda(\eps,\gamma)$ and
such that
$$
f_{\mu,\eps,0}=f_{\mu,\eps}\ .
$$
Then, we can find an infinite sequence $\eps_k$, $k\in\integer^d\backslash\{0\}$, with
$\lambda(\eps_k)=e^{2\pi ik\cdot \omega}$, such that there is no formal expansion in
$\gamma$ for the solution $(K_{\eps_k,\gamma},\mu_{\eps_k,\gamma})$ of \equ{invariance} associated to $f_{\mu,\eps,\gamma}$.
\end{proposition}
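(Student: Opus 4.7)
The plan is to exploit the resonance $\lambda(\eps_k,0)=e^{2\pi i k\cdot\omega}$ in order to produce, at the first nontrivial order in $\gamma$, a Fourier--mode obstruction in the linearized invariance equation that cannot be absorbed by the $d$--dimensional drift $\mu$. First I construct the sequence $\eps_k$. By hypothesis \textbf{H}$\lambda$, the analytic function $\eps\mapsto \lambda(\eps,0)$ is a local $a$-to-$1$ branched covering of a neighborhood of $1\in\complex$. Since $\omega$ is Diophantine, the set $\{k\cdot\omega\bmod 1:k\in\integer^d\setminus\{0\}\}$ is dense in $\torus^d$, so the values $w_k:=e^{2\pi i k\cdot\omega}$ accumulate at $1$ along the unit circle, and for every sufficiently large $|k|$ I can pick $\eps_k$ near $0$ with $\lambda(\eps_k,0)=w_k$ and $\eps_k\to 0$.

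Fix one such $\eps_k$ and attempt the formal expansion $K_{\eps_k,\gamma}=K_{\eps_k}+\sum_{j\ge 1}\gamma^j K^{(j)}$, $\mu_{\eps_k,\gamma}=\mu_{\eps_k}+\sum_{j\ge 1}\gamma^j \mu^{(j)}$ in the invariance equation for $f_{\mu,\eps_k,\gamma}$. Collecting the coefficient of $\gamma^j$ gives, exactly as in \eqref{epsj},
\[
(Df_{\mu_{\eps_k},\eps_k,0}\circ K_{\eps_k})\,K^{(j)} - K^{(j)}\circ T_\omega + (D_\mu f_{\mu_{\eps_k},\eps_k,0}\circ K_{\eps_k})\,\mu^{(j)} = G_j,
\]
with $G_j$ a polynomial in $K^{(i)},\mu^{(i)}$ ($i<j$) and in the $\gamma$-derivatives of $f$ at $\gamma=0$; in particular $G_1=-\partial_\gamma f_{\mu_{\eps_k},\eps_k,0}\circ K_{\eps_k}$. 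Applying the automatic reducibility of Section~\ref{sec:KAM} around the \emph{exact} solution $(K_{\eps_k},\mu_{\eps_k})$ and writing $K^{(j)}=MW^{(j)}$ with $M$ as in \eqref{Mdef}, this reduces to the pair of cohomology equations of the type \eqref{c12}, but now with multiplier $\lambda(\eps_k,0)=e^{2\pi i k\cdot\omega}$ in the block for $W_2^{(j)}$.

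At this point the obstruction appears. The $k$-th Fourier mode of the equation for $W_2^{(j)}$ reads
\[
0 = \widehat{\widetilde G_{j,2}}(k) - \widehat{\widetilde A_2}(k)\,\mu^{(j)},
\]
a $d$-component linear condition. The remaining (non-resonant) modes determine $W_2^{(j)}$ uniquely in terms of $\mu^{(j)}$, and plugging into the average-zero solvability of the $W_1^{(j)}$-equation gives, through the analogue of the scheme \eqref{S1}--\eqref{S2} and the non-degeneracy \textbf{HND} (now evaluated at $\lambda=\lambda(\eps_k,0)\ne 1$), a linear system that already fixes $\mu^{(j)}$ without using the resonant constraint above. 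Hence the resonant $k$-th mode condition is an extra obstruction on the inhomogeneity $\widetilde G_{j,2}$. At $j=1$ it is a bounded linear functional of $\partial_\gamma f_{\mu,\eps_k,\cdot}|_{\gamma=0}$; it is non-trivial because one can exhibit an analytic conformally symplectic deformation whose generator is supported on the $k$-th Fourier harmonic (the constraint $f^*\Omega=\lambda\Omega$ leaves ample freedom to turn on a single Fourier mode). The set of families that fail to trigger this functional is therefore a proper closed linear subspace, which is the precise content of ``generic''.

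The principal obstacle I foresee is the careful verification of this last step: one must check that the conformally symplectic constraint on the deformation does not conspire to annihilate the $k$-th Fourier mode of the admissible perturbations. I would handle this by an explicit construction, composing the unperturbed map with a conformally symplectic transformation whose generator has angular dependence $\cos(2\pi k\cdot\theta)$, and computing $\widehat{\widetilde G_{1,2}}(k)$ directly. As a safety net, a Baire--category argument shows that the countable union (over $k$) of the ``bad'' families is meagre in the natural topology on analytic deformations of $f_{\mu,\eps}$, yielding the genericity claim over the whole sequence $\{\eps_k\}$ simultaneously.
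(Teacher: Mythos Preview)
Your approach coincides with the paper's: expand formally in $\gamma$, reduce via automatic reducibility around the exact solution $(K_{\eps_k},\mu_{\eps_k})$ to the pair of cohomology equations of type \eqref{c12}, and identify the obstruction in the second equation at the resonant Fourier mode $k$ when $\lambda(\eps_k)=e^{2\pi i k\cdot\omega}$. You supply more detail than the paper in two places---the explicit construction of the $\eps_k$ from the $a$-fold branched covering structure of $\lambda$, and the genericity argument via a nontrivial linear functional on $\partial_\gamma f|_{\gamma=0}$ together with Baire category---whereas the paper simply records the obstruction and asserts that a generic $\gamma$-deformation fails the resulting condition, yielding a codimension-$j$ submanifold at order $j$.

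One point in your argument deserves care. You assert that the analogue of the scheme \eqref{S1}--\eqref{S2} ``already fixes $\mu^{(j)}$ without using the resonant constraint,'' but at the resonance the harmonic $\widehat{W}^{(j)}_{2,k}$ becomes an additional free $d$-vector (its multiplier $\lambda-e^{2\pi ik\cdot\omega}$ vanishes), and this free harmonic feeds back into the zero-mode obstruction of the $W_1$-equation through the term $\widehat{S}_{-k}\,\widehat{W}^{(j)}_{2,k}$ in $(SW_2)_0$. Hence the bookkeeping is $2d$ constraints against the $2d$ free parameters $(\sigma,\widehat{W}^{(j)}_{2,k})$ rather than an immediate over-determination; the obstruction is genuine only when the resulting $2d\times 2d$ block built from $(\widetilde A_2)_k$ and $\widehat S_{-k}$ fails to cover the generic data. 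The paper's own proof is no more careful on this count---it simply notes that ``the second equation in \eqref{eqgamma} has obstructions'' and that generically these are not satisfied---so your argument matches the paper's level of rigor while being more explicit about the mechanism.
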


The proof of Proposition~\ref{destruction} is given in Section~\ref{sec:proofdes}.\\

Notice that the proof of Proposition~\ref{destruction} shows that near $\eps=\eps_k$,
the effects of a change on $(K_\eps,\mu_\eps)$ are much larger than the change on $f_{\mu_\eps,\eps}$.
Hence, it seems likely that one can destroy the solution $(K_\eps,\mu_\eps)$  without altering too much
the family $f_{\mu_\eps,\eps}$.

We interpret Proposition~\ref{destruction} as meaning that the solutions for
families which have large domains of analyticity are unstable and can be easily destroyed.
Of course, the sense in which we prove instability is somewhat weaker than what is needed to
reach the conclusions of Conjecture~\ref{isoptimal} rigorously, but it goes in the right direction and this motivates the
formulation of the results as a conjecture.

We note that the above argument suggests that the set of functions
$f_{\mu,\eps,\gamma}$, such that $f_{\mu,\eps,\gamma}$ has a torus
that is analytic in a neighborhood of $\eps_k$, is a set of infinite
co-dimension (in particular nowhere dense). Once this result were
established, the set on which infinitely many resonances are destroyed
is residual, in the sense of Baire category.

Note that our argument in Proposition~\ref{destruction} is somewhat
similar to the arguments in \cite{Poincarefrench} about the lack of
uniform integrability; indeed, what we conjectured is
an analogue of
the lack of integrability for generic systems, based on the lack of uniform
integrability.

\subsection{Proof of Proposition~\ref{destruction} }\label{sec:proofdes}

The proof of Proposition~\ref{destruction} is very similar to the results we obtained on
Lindstedt series in Section~\ref{sec:Lindstedt}. If there existed
an approximate solution $(K_\eps,\mu_\eps)$ satisfying
$$
f_{\mu_\eps,\eps,\gamma}\circ K_\eps-K_\eps\circ T_\omega=E
$$
with an error $E$, then we can find a matrix $M$ such that
$$
Df_{\mu_\eps,\eps,\gamma}\circ K (\theta) M(\theta)=M(\theta+\omega)\
\left(%
\begin{array}{cc}
  \Id & S(\theta) \\
  0 & \lambda\Id \\
 \end{array}%
\right)+R\ ,
$$
where $M$ and $S$ have been defined, respectively, in \equ{Mdef} and \equ{Sdef}
with $f_{\mu_a,\eps}$ replaced by $f_{\mu_\eps,\eps,\gamma}$, and $R$ is the error in
the automatic reducibility as in \equ{reduction}.
By the procedure of automatic reducibility, we see that after making expansions in $\gamma$,
we have to solve the following two equations:
\beqa{eqgamma}
W_1-W_1\circ T_\omega&=&-S\, W_2-\widetilde E_1-\widetilde A_1\sigma\nonumber\\
\lambda W_2-W_2\circ T_\omega&=&-\widetilde E_2-\widetilde A_2\sigma\ ,
\eeqa
where $W=(W_1,W_2)$ and $\sigma$ are the corrections to the step,
$\widetilde E\equiv (M\circ T_\omega)^{-1}E$, $\widetilde A=[\widetilde A_1|\widetilde A_2]\equiv (M\circ T_\omega)^{-1} D_\mu f_{\mu_\eps,\eps,\gamma}\circ K_\eps$.
We must require that the right hand sides of \equ{eqgamma} have zero average.
This can be obtained by properly choosing $\sigma$, provided that the non-degeneracy condition \equ{non-degeneracy} is satisfied.

We note that precisely at the points $\eps_k$ where $\lambda(\eps_k)=e^{2\pi ik\cdot\omega}$,
$k\in\integer^d\backslash\{0\}$, then the second equation in \equ{eqgamma}
has obstructions to solutions.
We also note that, because of the form pointed out in \equ{eqgamma}, the solution of the order $j$ equations
of a formal expansion in $\gamma$ exists
only if ${d\over {d\gamma}} f_{\mu_\eps,\eps,\gamma}|_{\gamma=0}$ satisfies a condition.
Of course the conditions for different $j$ are independent, since they affect different coefficients.
Therefore, the existence of expansions to order $j$ requires that the mapping belongs to a co-dimension $j$
submanifold of maps. \qed

\vskip.1in

As indicated in the preliminaries, we note that for the values
$\eps\simeq \eps_k$, we have that the changes on $K_\eps$,
$\mu_\eps$ induced by perturbations of $f_{\mu_\eps,\eps,\gamma}$
are much larger than the perturbations of
$f_{\mu_\eps,\eps,\gamma}$ itself, this makes it plausible that
one can introduce changes in $f_{\mu_\eps,\eps}$ which destroy the
analytic $K_\eps$ without destroying $f_{\mu_\eps,\eps}$ (see
\cite{Siegel54} for similar arguments).

\section{Automatic reducibility and Lindstedt series}\label{fastlindstedt}

In this Section we present several results about Lindstedt series. First, we show that we can obtain Lindstedt
series expansions around any point $\eps_0$ (see Section~\ref{sec:series}).
Some relations with the theory of monogenic functions are presented in Section~\ref{sec:monogenic}.
Then, by lifting the automatic reducibility
to families, we show how to get quadratically convergent algorithms for the Lindstedt series; as a byproduct of this result,
we will obtain an alternative proof of Part B of Theorem~\ref{main} (see Section~\ref{sec:quadratic}).
Finally, we will show how the results of Section~\ref{sec:series} lead to define an improved domain
of analyticity (see Section~\ref{sec:improved}) and we conclude by establishing
 the Whitney differentiability
of $K_\eps$, $\mu_\eps$ on $\G$ (see Section~\ref{sec:whitney}).

\subsection{Lindstedt series from any analytic torus}\label{sec:series}
In this Section we show that if for some $\eps_0$ there
are  $K_{\eps_0},\mu_{\eps_0}$ solving
the invariance equation at $\eps_0$, and which satisfy some
mild non-degeneracy assumptions (which are implied by
$|\eps_0|\ll 1$),  we
can find a formal  power series in $\eps - \eps_0$ that solves
the invariance equation in the sense of formal power series expansions.

Of course, for points in the interior of the analyticity domain,
the existence of expansions is obvious.  The interesting case is that the same result holds for
some points in the boundary of the analyticity domain. As we will see,
this matches very well with the theory of monogenic functions (see Section~\ref{sec:monogenic}).
It is also important to notice that even in the interior of the domain of analyticity
the present method gives very good estimates of the formal power series,
much better than what can be obtained from just Cauchy estimates,
see Remark~\ref{rem:deriv}.

\begin{proposition} \label{Lindstedt_anywhere}
Let $\omega$ be a Diophantine vector in the sense of
Definition~\ref{vectordiophantine}, let
$f_{\mu,\eps}$ be a family of
conformally symplectic systems as before.
Assume that for some $\eps_0$ we can find $K_{\eps_0} \in \A_\rho$,
$\mu_{\eps_0} \in \complex^d$ such that
$f_{\mu_{\eps_0},\eps_0}^*\Omega = \lambda(\eps_0) \Omega$ and
\beq{inveps0}
f_{\mu_{\eps_0},\eps_0}\circ K_{\eps_0} = K_{\eps_0}\circ T_\omega \ .
\eeq
Assume furthermore that $\lambda(\eps_0)$ is Diophantine with respect to
$\omega$ in the sense of Definition~\ref{lambdadiophantine},
that $K_{\eps_0}(\torus^d_\rho)$ is well inside the domain of
definition of $f_{\mu_{\eps_0}, \eps_0}$ and that $K_{\eps_0}$ satisfies
the non-degeneracy condition {\bf H3} of Theorem~\ref{mainKAM}.

Then, for any $0 < \rho' < \rho$, there is a formal Lindstedt power series solution
$K_\eps^{[\infty]}$, $\mu_\eps^{[\infty]}$:
\begin{equation} \label{Lindstedt_arbitrary}
\begin{split}
\mu_\eps^{[\infty]} &= \sum_{n=0}^\infty \mu_n (\eps- \eps_0)^n\ , \\
K_\eps^{[\infty]} &= \sum_{n=0}^\infty K_n (\eps- \eps_0)^n\\
\end{split}
\end{equation}
with coefficients $K_n \in \A_{\rho'}$, $\mu_n\in\complex^d$, that satisfy
the invariance equation in the sense of formal power series,
namely
\begin{equation} \label{reminder}
\left \| f_{\mu_{\eps}^{[\le N]},\eps_0} \circ K^{[\le N]}_\eps -  K^{[\le N]}_{\eps}\circ T_\omega \right \|_{\rho'} \le C_N |\eps - \eps_0|^{N+1}\ ,
\end{equation}
where $K_\eps^{[\leq N]}=\sum_{n=0}^N K_n(\eps-\eps_0)^n$, $\mu_\eps^{[\leq N]}=\sum_{n=0}^N \mu_n(\eps-\eps_0)^n$.

The coefficients may be chosen so that the normalization \eqref{normalized},
with $M$ the corresponding for $K_{\eps_0}$,
is satisfied in the sense of formal power series.
The series that satisfy
\eqref{inveps0} and \eqref{normalized} in the sense of power series are
unique.
\end{proposition}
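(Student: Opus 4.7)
The plan is to mimic the argument of Section~\ref{sec:Lindstedt} with the base point shifted from $0$ to $\eps_0$. Substituting the expansions \eqref{Lindstedt_arbitrary} into the invariance equation $f_{\mu_\eps,\eps}\circ K_\eps = K_\eps\circ T_\omega$, expanding via Taylor's formula around $(\mu_{\eps_0},\eps_0)$ in both the parameter and the embedding, and equating the coefficients of $(\eps-\eps_0)^j$, the zeroth order reproduces \eqref{inveps0}, while at each order $j\ge 1$ one obtains
\[
(Df_{\mu_{\eps_0},\eps_0}\circ K_{\eps_0})\, K_j - K_j\circ T_\omega + (D_\mu f_{\mu_{\eps_0},\eps_0}\circ K_{\eps_0})\, \mu_j = F_j,
\]
where $F_j$ is an explicit polynomial in $K_0,\dots,K_{j-1}$ and $\mu_0,\dots,\mu_{j-1}$ built from derivatives of $f_{\mu,\eps}$ evaluated at $(\mu_{\eps_0},\eps_0)$ and composed with $K_{\eps_0}$.

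To solve this linearized equation I would use automatic reducibility exactly as in Section~\ref{sec:KAM}. Let $M$ be the frame \eqref{Mdef} built from $K_{\eps_0}$ and $S$ the associated matrix \eqref{Sdef}; since $K_{\eps_0}$ is an \emph{exact} invariant torus at $\eps_0$, the reduction identity \eqref{reduction} holds with $R=0$ and conformal factor $\lambda(\eps_0)$. Writing $K_j = M W_j$ and splitting into the two blocks of $M$, the equation for $(W_j,\mu_j)$ decomposes, just as in \eqref{c12}, into
\begin{align*}
\lambda(\eps_0)\, W_{j,2} - W_{j,2}\circ T_\omega &= \widetilde E_{j,2} - \widetilde A_2\, \mu_j,\\
W_{j,1} - W_{j,1}\circ T_\omega &= \widetilde E_{j,1} - S\, W_{j,2} - \widetilde A_1\, \mu_j.
\end{align*}
The first equation is the genuinely twisted cohomology equation \eqref{twisted_cohomology} with $\lambda = \lambda(\eps_0)\ne 1$; by Lemma~\ref{cohomology} and the assumed Diophantine property of $\lambda(\eps_0)$, its zero-average part admits a unique solution in $\A_{\rho'}$ for any $\rho' < \rho$, with the usual tame loss of domain. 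Taking the averages of both equations and substituting $(W_{j,2})^0 = (B_a)^0 + (B_b)^0\, \mu_j$, where $(B_a)^0,(B_b)^0$ solve twisted cohomology equations with $\lambda=\lambda(\eps_0)$, I obtain a $2d\times 2d$ linear system for $(\overline{W_{j,2}},\mu_j)$ whose coefficient matrix is precisely the one appearing in \eqref{non-degeneracy}; the non-degeneracy hypothesis {\bf H3} inherited from Theorem~\ref{mainKAM} makes this system invertible. With $\mu_j$ and $\overline{W_{j,2}}$ in hand, the second equation has zero-average right-hand side and is uniquely solved for $(W_{j,1})^0$ by Lemma~\ref{cohomology} applied with $\lambda=1$; the remaining free constant $\overline{W_{j,1}}$ is pinned down by the linearized normalization \eqref{normalized} based on $K_{\eps_0}$, which, since $K_j = MW_j$, reduces to $\overline{W_{j,1}} = 0$.

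Distributing the loss $\rho - \rho'$ of analyticity across the finitely many orders up to $N$ in the usual Nash--Moser fashion yields $K_n\in\A_{\rho'}$ and $\mu_n\in\complex^d$ for every $n\le N$; the quantitative remainder \eqref{reminder} then follows from the Taylor formula applied to the map $(\eps,K,\mu)\mapsto f_{\mu,\eps}\circ K - K\circ T_\omega$ at $(\eps,K_\eps^{[\le N]},\mu_\eps^{[\le N]})$ expanded around $(\eps_0,K_{\eps_0},\mu_{\eps_0})$. Uniqueness of the normalized formal series is immediate from the order-by-order construction, since at every step $(\mu_j, W_j)$ is determined uniquely by the invertible $2d\times 2d$ system (via {\bf H3}), the twisted cohomology equations (via Lemma~\ref{cohomology}), and the normalization. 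The only qualitatively new feature compared with Section~\ref{sec:Lindstedt} is that the inner cohomology equation is now genuinely twisted by $\lambda(\eps_0)\ne 1$, which is precisely why the Diophantine hypothesis on $\lambda(\eps_0)$ is imposed; everything else is careful bookkeeping of the Taylor expansions, which I expect to be the most tedious but conceptually routine part.
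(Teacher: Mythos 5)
Your proposal is correct and follows essentially the same route as the paper's proof: substitute the formal series around $\eps_0$, use the exact reducibility \eqref{reduction} (with $R=0$) of $Df_{\mu_{\eps_0},\eps_0}\circ K_{\eps_0}$ in the frame $M$ built from $K_{\eps_0}$, reduce each order to the pair of cohomology equations \eqref{c12} (the twisted one with $\lambda(\eps_0)$ and the untwisted one), solve the averages via the $2d\times 2d$ non-degeneracy matrix of {\bf H3}, and pin down $\overline{W_{j,1}}$ by the normalization \eqref{normalized}. The paper states this more telegraphically by referring back to the recursion in Section~\ref{sec:theoremmainKAMproof}, but your explicit unpacking is the same argument.
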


\begin{proof}
Introduce
the matrix $M$ corresponding to $K_{\eps_0}$ as in \eqref{Mdef}, for which we have
\beq{ast}
D f_{\mu_{\eps_0}, \eps_0}\circ K_{\eps_0} M(\theta)  =  M(\theta + \omega)
\ \left({\begin{array}{cc}
  \Id & S(\theta) \\
  0 & \lambda(\eps_0) \Id \\
 \end{array}%
 }\right).
\eeq
Since the invariance equation  has zero error, so does the
reducibility equation \equ{ast}.

If we substitute \eqref{Lindstedt_arbitrary} in the invariance equation and
equate the coefficients of order
$(\eps -\eps_0)^n$ on both sides,
we obtain for  $n \ge 1$:
\begin{equation}\label{recursion}
D f_{\mu_{\eps_0}, \eps_0}\circ K_{\eps_0} K_n
+ (\partial_\mu f)_{\mu_{\eps_0}, \eps_0}\circ K_{\eps_0}  \mu_n   +
R_n = K_n\circ T_\omega
\ ,
\end{equation}
where $R_n$ is a polynomial expression in $K_1$, $\ldots$ $K_{n-1}$,
$\mu_1$, $\ldots$ $\mu_{n-1}$ with coefficients which are derivatives
of $f$ evaluated at $K_{\eps_0}, \mu_{\eps_0}$.

We think of \eqref{recursion} as a recursion that allows one to compute
$K_n$, $\mu_n$, once $K_1,\ldots K_{n-1}$, $\mu_1,\ldots \mu_{n-1}$ have been computed.

We also substitute \eqref{Lindstedt_arbitrary} in the normalization condition
\eqref{normalized} so that we can obtain expansions of functions that
satisfy the normalization condition with respect to $K_{\eps_0}$.  We obtain
that the coefficient of order $(\eps -\eps_0)^n$ of the normalization
is just
\begin{equation}\label{normalizationn}
\int_{\torus^d} \Big[M^{-1} (\theta) K_n(\theta) \Big]_1 \, d\theta = 0\ .
\end{equation}

Of course, the equation \eqref{recursion} is
a particular case of the
equations appearing when we studied the Newton's equation
for approximately invariant solutions in Section~\ref{sec:theoremmainKAMproof}
(it suffices to take $E = R_n$).
Many more
details can be read in \cite{CallejaCL11}.  We note that using the
change of variables $K_n = M W$ reduces the equation \equ{recursion}
to constant coefficients cohomology equations. We emphasize that the series whose coefficients
satisfy \equ{normalizationn} are unique.
\end{proof}

Two important corollaries of Proposition~\ref{Lindstedt_anywhere} are given below.
Corollary~\ref{cor:monogenic} shows that one can construct asymptotic series of the form
\eqref{Lindstedt_arbitrary}, which in principle do not converge, such that the truncated
series to order $N$ satisfies the invariance equation up to a small error. This result
allows to start an iterative process from the approximate solution,
leading to an exact solution which is locally unique.

Corollary~\ref{derivatives_bounded} shows that the derivatives of $K_\eps$, $\mu_\eps$ are
obtained by finding the expansions; their estimates involve only the loss of domain in
the coordinates, while they are uniform in $\eps$.

\begin{cor} \label{cor:monogenic}
Within the assumptions of Proposition~\ref{Lindstedt_anywhere}, let $\eps_0$ be such that we can
find $K_{\eps_0}\in\A_\rho$, $\mu_{\eps_0}\in\complex^d$ satisfying \equ{inveps0}. For any
$0<\rho'<\rho$,
let $K_{\eps}^{[\infty]}$, $\mu_\eps^{[\infty]}$be such that \equ{Lindstedt_arbitrary} holds
with coefficients $K_n \in \A_{\rho'}$, $\mu_n\in\complex^d$ and that the truncated series
$K_\eps^{[\leq N]}$, $\mu_\eps^{[\leq N]}$ satisfy \equ{reminder}
and the normalization condition.

Then, for all $\eps$ sufficiently close to $\eps_0$, one has
\beqa{equbounds}
\left\| \sum^N_{n= 0}   K_n(\eps  -\eps_0)^n  - K_\eps \right\|_{\rho'} &\le&
C_N |\eps - \eps_0|^{N+1}  \nonumber\\
\left| \sum^N_{n= 0}  \mu_n (\eps  -\eps_0)^n  - \mu_\eps \right| &\le&
C_N |\eps - \eps_0|^{N+1}
\eeqa
for some positive constant $C_N$ depending on $\rho'$.

\end{cor}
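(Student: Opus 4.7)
The plan is to apply the a-posteriori KAM theorem (Theorem~\ref{mainKAM}) with the truncated Lindstedt expansion as the initial approximate solution, and then read off \equ{equbounds} directly from its quantitative conclusions. For $\eps$ close to $\eps_0$, I would take $K_a = K_\eps^{[\leq N]}$ and $\mu_a = \mu_\eps^{[\leq N]}$. By hypothesis \equ{reminder}, the residual $E = f_{\mu_a,\eps}\circ K_a - K_a\circ T_\omega$ satisfies $\|E\|_{\rho'} \le C_N |\eps-\eps_0|^{N+1}$, so in particular it can be made as small as needed by shrinking the neighborhood of $\eps_0$.

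Next I would verify that hypotheses \textbf{H1}--\textbf{H5} of Theorem~\ref{mainKAM} hold at such $\eps$. \textbf{H3} is a non-vanishing determinant condition that holds at $\eps_0$ by assumption (the twist hypothesis on $K_{\eps_0}$) and persists for nearby $\eps$ by continuity of the entries of the matrix in \equ{non-degeneracy}; likewise the domain condition \textbf{H4} is stable under small perturbations. For \textbf{H1}, one restricts $\eps$ to lie in the sublevel set where $\nu(\lambda(\eps);\omega,\tau)$ stays bounded by (say) twice its value at $\eps_0$ — this is precisely the kind of set that appears in $\G$, and by construction $\eps_0$ lies in it. The smallness hypothesis \textbf{H5} then amounts to $C_N |\eps-\eps_0|^{N+1}$ being smaller than a constant depending only on $\tau$, $d$, the twist constant $\mathcal{T}$, the norms $\|DK_a\|$, $\|M\|$, $\|M^{-1}\|$, $\|N\|$, and the fixed values of $\nu(\omega;\tau)$ and $\nu(\lambda(\eps);\omega,\tau)$, all of which stay uniformly bounded in the chosen neighborhood; hence \textbf{H5} is met for $|\eps-\eps_0|$ sufficiently small.

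With the hypotheses in place, Theorem~\ref{mainKAM} produces an exact solution $(K_\eps,\mu_\eps)$ of the invariance equation satisfying
\[
\| K_\eps - K_\eps^{[\leq N]} \|_{\rho'-\delta} \le C_K\, \nu(\omega;\tau)^{-1}\nu(\lambda(\eps);\omega,\tau)^{-1}\delta^{-2(\tau+d)} \|E\|_{\rho'},
\]
and $|\mu_\eps - \mu_\eps^{[\leq N]}| \le C_\mu \|E\|_{\rho'}$. Choosing $\delta$ as a fixed fraction of $\rho - \rho'$ and using the uniform bound on the Diophantine constants, both right-hand sides are $O(|\eps-\eps_0|^{N+1})$, which gives \equ{equbounds}. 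The constant $C_N$ depends on $\rho'$ through $\delta$ and on $N$ through $C_N$ appearing in \equ{reminder}, as claimed. Finally, the local uniqueness statement of Lemma~\ref{lem:unique}, combined with the fact that the truncated Lindstedt series was chosen to satisfy the normalization \equ{normalized} in the sense of formal power series (so that the exact $(K_\eps,\mu_\eps)$ inherits a normalization close to that of $K_{\eps_0}$ for $|\eps-\eps_0|$ small), ensures that the $(K_\eps,\mu_\eps)$ obtained here coincides with any other normalized solution in the relevant neighborhood, in particular with the family provided by Theorem~\ref{main}.

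The main obstacle is the control of the Diophantine constant $\nu(\lambda(\eps);\omega,\tau)$ for $\eps \ne \eps_0$: by lower semicontinuity this quantity can jump up near $\eps_0$, and it can even become infinite along a dense set of resonant $\eps$. The honest way to handle this is to require the \emph{sufficiently close} in the statement to mean \emph{sufficiently close within the appropriate sublevel set} (equivalently, within $\G$ for a suitable constant); once this restriction is made, all the constants entering the KAM estimate are uniform and the argument above closes without further surprises.
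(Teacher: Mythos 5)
Your proposal matches the paper's own proof: both take the truncated Lindstedt polynomial $(K_\eps^{[\leq N]},\mu_\eps^{[\leq N]})$ as the initial data for Theorem~\ref{mainKAM}, verify \textbf{H5} from \equ{reminder}, read \equ{equbounds} off the quantitative conclusions of the KAM theorem (with a fixed loss of analyticity domain absorbed into $C_N$), and invoke Lemma~\ref{lem:unique} to identify the output with the normalized family. Your closing observation — that \emph{sufficiently close} must implicitly mean within the sublevel set where $\nu(\lambda(\eps);\omega,\tau)$ stays controlled, i.e.\ within $\G$ — correctly flags a point the paper's terse proof leaves implicit, and is consistent with how the corollary is later used on $\G_{r_0}$ in Section~\ref{sec:whitney}.
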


\begin{proof}
From the construction of the series we obtain that the polynomials
$K_\eps^{[\leq N]}$, $\mu_\eps^{[\leq N]}$ satisfy the invariance equation
up to a small error as in \equ{reminder}. Hence, all assumptions {\bf H1}-{\bf H5}
of Theorem~\ref{mainKAM} are satisfied. Applying Theorem~\ref{mainKAM} with
$K_a=K_\eps^{[\leq N]}$, $\mu_a=\mu_\eps^{[\leq N]}$, we obtain an exact
solution $K_\eps$, $\mu_\eps$, which satisfies the bounds \equ{equbounds}.
Using the local uniqueness of the normalized solutions in
Lemma~\ref{lem:unique}, we obtain that
the solution produced is $K_\eps$, $\mu_\eps$.
\end{proof}

\begin{cor} \label{derivatives_bounded}
Assume that for $\eps \in \G$ we have
\beqano
\|K_\eps\|_\rho &\le& B,\nonumber\\
 | \mu_\eps| &\le& B
\eeqano
for some $B > 0$. Then, we have for $ 0 < \rho' < \rho$:
\beqa{der}
\left\|\left( \frac{d}{d \eps}\right)^j K_\eps\right\|_{\rho'} &\le&   C_j ( \rho -\rho')^{-(2 \tau+2d) j} B\nonumber\\
 \left|\left( \frac{d}{d \eps}\right)^j\mu_\eps\right|&\le&  C_j ( \rho -\rho')^{-(2 \tau+2d) j} B\ ,
\eeqa
where the derivative is understood in the regular sense for
$\eps$ in the interior of $\G_{r_0}$ and as the (unique!) term determined from the Lindstedt
expansion; the constant $C_j$ depends also on the Diophantine constant.
\end{cor}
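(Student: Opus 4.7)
The plan is to invoke Proposition~\ref{Lindstedt_anywhere} at each base point $\eps_0 \in \G_{r_0}$, with $(K_{\eps_0}, \mu_{\eps_0})$ the exact solution produced by Theorem~\ref{main}, and to derive quantitative bounds for the Lindstedt coefficients $K_n, \mu_n$ directly from the inductive step of their construction. For $\eps_0$ in the interior of $\G_{r_0}$, Corollary~\ref{cor:monogenic} combined with the local uniqueness of normalized solutions in Lemma~\ref{lem:unique} identifies these coefficients with the ordinary Taylor coefficients, so that $(d/d\eps)^j K_\eps|_{\eps=\eps_0} = j!\,K_j$ and analogously for $\mu_\eps$; for $\eps_0$ on the boundary, the Lindstedt coefficients are the very definition of the derivative used in the statement. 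It therefore suffices to establish
\beqno
\|K_n\|_{\rho'} + |\mu_n| \le \widetilde C_n\,(\rho-\rho')^{-(2\tau+2d)n}\, B,
\eeqno
with $\widetilde C_n$ depending on $n,\tau,d$ and the Diophantine constants $\nu(\omega;\tau)$, $\nu(\lambda(\eps_0);\omega,\tau)$ but not on $\rho'$.

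To bound $K_n$ and $\mu_n$ I would iterate Algorithm~\ref{alg:step} in the spirit of the proof of Proposition~\ref{Lindstedt_anywhere}. At order $n$ the recursion \equ{recursion} has the same linearized structure as one step of the Newton method described in Section~\ref{sec:theoremmainKAMproof}: the substitution $K_n = M W_n$ combined with the automatic reducibility \equ{ast} transforms it into the constant-coefficient system \equ{c12}. The non-degeneracy {\bf H3}---inherited at $(K_{\eps_0},\mu_{\eps_0})$ from the hypotheses of Proposition~\ref{Lindstedt_anywhere}---determines $\mu_n$ and the averages $\overline{W_n}$ through a bounded linear algebra step controlled by the twist constant $\Tau$, while the zero-average parts come from two cohomology equations of the form \equ{twisted_cohomology}, one with $\lambda=1$ and one with $\lambda=\lambda(\eps_0)$. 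Each is solved by Lemma~\ref{cohomology} at the cost of losing a strip $\delta_n$ and introducing a factor bounded by $C(\tau,d)\,\nu(\omega;\tau)\,\nu(\lambda(\eps_0);\omega,\tau)\,\delta_n^{-2(\tau+d)}$.

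Splitting the total loss as $\rho - \rho' = \sum_{n=1}^{j}\delta_n$ with $\delta_n = (\rho-\rho')/j$, so that $K_j$ ends up in $\A_{\rho'}$, an induction on $n$ yields
\beqno
\|K_j\|_{\rho'} + |\mu_j| \le C_j'\, [\nu(\omega;\tau)\nu(\lambda(\eps_0);\omega,\tau)]^{j}\,\left(\frac{j}{\rho-\rho'}\right)^{(2\tau+2d)j}\, B,
\eeqno
where $C_j'$ is a combinatorial constant absorbing the polynomial growth of the nonlinear remainder $R_n$ appearing in \equ{recursion}; $R_n$ is polynomial in $K_0,\ldots,K_{n-1}$, $\mu_0,\ldots,\mu_{n-1}$ with coefficients bounded by derivatives of $f_{\mu,\eps}$ at $(\mu_{\eps_0},\eps_0)\circ K_{\eps_0}$, and by the assumption $\|K_{\eps_0}\|_\rho\le B$ these derivatives are uniformly bounded. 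The prefactor $j^{(2\tau+2d)j}$, the Diophantine constants raised to the $j$-th power, and the factor $j!$ needed to pass from $K_j$ to the $j$-th derivative are all absorbed into the $j$-dependent constant $C_j$ of the statement, yielding \equ{der}.

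The main obstacle will be the precise bookkeeping of the nonlinear remainders $R_n$ through the iteration: one must verify that the combinatorial blow-up of the $n$-fold product of lower-order coefficients combines multiplicatively with the cohomology losses $\prod_n \delta_n^{-2(\tau+d)}$ to yield exactly the claimed $(\rho-\rho')^{-(2\tau+2d)j}$ behavior, with no extra powers of $(\rho-\rho')^{-1}$ creeping in. This is a standard tame estimate of Nash--Moser type and is, in a more demanding quadratic form, already present in the proof of Theorem~\ref{mainKAM}; what makes the exponent sharp here is that exactly two cohomology equations are solved at each order, each costing $\delta^{-(\tau+d)}$ via Lemma~\ref{cohomology}.
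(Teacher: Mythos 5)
Your proposal is correct and follows essentially the same route as the paper: both bound the Lindstedt coefficients $K_n,\mu_n$ via the recursion from Proposition~\ref{Lindstedt_anywhere}, solving two cohomology equations per order (one with $\lambda=1$, one with the conformal factor) at a cost $\delta^{-(\tau+d)}$ each via Lemma~\ref{cohomology}, splitting the domain loss $\rho-\rho'$ over $j$ steps, and absorbing the resulting $j$-dependent combinatorial and Diophantine factors into $C_j$. The paper states this more tersely, but the mechanism and the exponent bookkeeping are the same as yours.
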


\begin{proof}
At each step, the derivatives in \equ{der} are obtained by solving
two cohomology equations: one of them is a regular cohomology equation not involving the conformal
factor, while the other depends on $\lambda$ (see Proposition~\ref{Lindstedt_anywhere} and compare, e.g.,
with \equ{eqgamma}). At each step one has a
constant loss of domain $(\rho-\rho')$.
The solution of each equation gives a factor which contains a power of the loss of domain $(\rho-\rho')^{-(\tau+d)}$ as stated in
Lemma~\ref{cohomology}.
The constant $C_j$ depends also on the quantities appearing in the definition of the set
$\G$ in \equ{goodset} and precisely on the product $\nu(\omega;\tau)\ \nu(\lambda;\omega,\tau)$.
\end{proof}

Notice that we have stated that the bounds of $|\left( \frac{d}{d \eps}\right)^j\mu_\eps|$
depend on $\rho -\rho'$, because this is what comes directly from
the proof. Of course, $\mu_\eps$ does not depend on $\rho$, so
that we could simply get
\[
\left|\left( \frac{d}{d \eps}\right)^j\mu_\eps\right|\le
 C_j \rho^{- (2 \tau+2d) j} B\ .
\]

\begin{remark}\label{rem:deriv}
It is interesting to compare Corollary~\ref{derivatives_bounded}
with the results obtained by applying the
Cauchy bounds by drawing just a small circle.
Of course, Cauchy bounds produce bounds on the derivatives
in the same space $\A_\rho$ and the bound involves
the distance to the boundary. The bounds in Corollary~\ref{derivatives_bounded}
are uniform in the distance to the boundary,  but are only true in
a slightly weaker space.

The main reason for the difference in the results is that we use not only function theoretic properties,
but we use that the functions we are studying
satisfy a functional equation.
The use of the functional equation to obtain the derivatives
provides much better estimates than
those available just using the function theoretic properties, albeit in a different space.

Of course, there are many functions that are defined in the same domains,
but which, not satisfying any functional equation, have much worse properties.
The systematic use of the functional equation and the fact that this functional equation  leads to
automatic reducibility is one of the big advantages of the present method
with respect to using only function theoretic methods.
\end{remark}

\subsection{A quadratically convergent algorithm for Lindstedt series and an alternative proof of Part B of Theorem~\ref{main}}\label{sec:quadratic}

An alternative way to compute the step by step series as in part A of
Theorem~\ref{main} is to lift Algorithm~\ref{alg:step} to the family
of maps depending also on the parameter $\eps$.  Furthermore, the lift
of the algorithm leads to a proof of Part B of Theorem~\ref{main}
based on an implementation of Newton's method after introducing a lift
to functions of $\eps$ and performing a Newton's step as a function of
$\theta$ and $\eps$.

In the following, we show that lifting the algorithm to compute
families of functions depending on the parameter $\eps$ is a very
natural procedure.
We consider the family $f_{\mu,\eps}$ of conformally symplectic maps. Assume that
for some $\eps$ such that $|\eps - \eps_0|$ is sufficiently small, we start with
an approximate solution
\begin{equation} \label{expansionOrdN}
K_\eps^{[\leq N]} = \sum_{n=0}^{N} K_n (\eps -\eps_0)^n\ , \qquad
\mu_\eps^{[\leq N]} = \sum_{n=0}^{N} \mu_n (\eps -\eps_0)^n\ ,
\end{equation}
which satisfies the invariance equation up to order $N+1$, i.e.
\beq{invfast}
f_{\mu^{[\leq N]}_{\eps},\eps}\circ K_{\eps}^{[\leq N]}-K_{\eps}^{[\leq N]}\circ T_\omega\,= \,E_\eps^{[\leq N]},
\eeq
with the following bound on the error for some $0<\rho'<\rho$:
\[\|E^{[\leq N]}_\eps\|_{\rho'} \leq C |\eps-\eps_0|^{N+1}\ . \]
Let us also define:
$$
\lambda_\eps^{[\leq N]} = \sum_{n=0}^{N} \lambda_n (\eps -\eps_0)^n\ .
$$
Algebraic manipulations similar to those performed in the proof of
Theorem~\ref{mainKAM} lead to the expression:
\beq{R0}
Df_{\mu^{[\leq N]}_{\eps},\eps}\circ K_{\eps}^{[\leq N]}\ M^{[\leq N]}_\eps(\theta)=M_\eps^{[\leq N]}(\theta+\omega)\
\left({\begin{array}{cc}
  \Id & S_\eps^{[\leq N]}(\theta) \\
  0 & \lambda_\eps^{[\leq N]}\Id \\
 \end{array}%
 }\right)+R(\theta)
 \eeq
for a suitable error function $R=R(\theta)$, where (again) we define the functions
$M_\eps^{[\leq N]}$,
$N_\eps^{[\leq N]}$ as
$$
M_\eps(\theta)\equiv[DK_{\eps}^{[\leq N]}(\theta)\,|\,J^{-1}\circ K_{\eps}^{[\leq N]}(\theta) DK_{\eps}^{[\leq N]}
(\theta)N_\eps^{[\leq N]}(\theta)]
$$
with
$$
N_\eps^{[\leq N]}(\theta)\equiv (DK_{\eps}^{[\leq N]\, \top}(\theta)\,\, DK_{\eps}^{[\leq N]}(\theta))^{-1}
$$
and $S_\eps^{[\leq N]}$ as in \equ{Sdef}.
We look for a new approximate solution, given by the updating of the approximate values
$K_{\eps}^{[\leq N]}$ and $\mu_{\eps}^{[\leq N]}$ with corrections
$\Delta_\eps$ (equivalently $W_\eps$) and $\sigma_\eps$, i.e.,
 $K'_\eps=K_{\eps}^{[\leq N]}+\Delta_\eps=K_{\eps}^{[\leq N]}+M_\eps^{[\leq N]}W_\varepsilon$, $\mu_\eps'=\mu_{\eps}^{[\leq N]}+\sigma_\eps$.
Proceeding in the same way as in Section~\ref{sec:KAM}, we look
for the corrections $(W_\eps,\sigma_\eps)$, that satisfy, \beqano
\left({\begin{array}{cc}
  \Id & S_\eps^{[\leq N]}(\theta) \\
  0 & \lambda_\eps^{[\leq N]}\Id \\
 \end{array}%
 }\right)W_\eps-W_\eps\circ T_\omega=&-&(M_\eps^{[\leq N]}\circ T_\omega)^{-1}E_\eps^{[\leq N]}\nonumber\\
 &-&(M_\eps^{[\leq N]}
\circ T_\omega)^{-1}(D_\mu f_{\mu_{\eps}^{[\leq N]},{\eps}}\circ K_{\eps}^{[\leq N]})\sigma_\eps\ ,
\eeqano
which in components provides the following cohomological equations similar to \equ{c12}:
\beqa{WW}
W_{1}-W_{1}\circ T_\omega&=&-S_\eps^{[\leq N]}\, W_{2}
-\widetilde E_1-\widetilde A_1 \sigma\nonumber\\
\lambda_\eps^{[\leq N]} W_{2}-W_{2}\circ T_\omega&=&-\widetilde E_2
-\widetilde A_2 \sigma\ ,
\eeqa
where $\widetilde E\equiv (\widetilde E_1,\widetilde E_2)
=(M_\eps^{[\leq N]}\circ T_\omega)^{-1}E_\eps^{[\leq N]}$,
$\widetilde A\equiv [\widetilde A_1|\widetilde A_2]=(M_\eps^{[\leq N]}\circ T_\omega)^{-1}
(D_\mu f_{\mu_{\eps}^{[\leq N]},{\eps}}\circ K_{\eps}^{[\leq N]})$.
The functions $\widetilde E$, $\widetilde A$, $W_1$, and $W_2$ depend on the parameter
$\eps$, but we omit it in the notation to avoid cluttering.

We notice that the cohomological equations in \equ{WW} can be
solved under the analogous of the
non--degeneracy condition \equ{non-degeneracy}. Then,
the solution of \equ{WW} provides the desired corrections $(W_\eps,\sigma_\eps)$.
We emphasize that the objects involved in the solution of the cohomological equations
in \equ{WW}
are functions of $\eps$ and that the estimates used
for the iterative step in Algorithm~\ref{alg:step} are uniform in $\eps$.
Furthermore, if we use the supremum norm the same estimates for the iterative step
hold for the convergence in the families (see \cite{LlaveO00}).
Of course, with this sup--norm the
convergence is uniform for the whole $\eps$ dependent family.
Since the uniform limit of analytic functions is analytic, the solution must
depend analytically on $\eps$.

The iterative method also suggests a practical algorithm for the computation
of the perturbative expansion in $\eps$. If we start the iterative procedure
from an approximate solution having terms up to $(\eps - \eps_0)$ to the
$N$-th power,
the quadratic convergence of the Newton's method implies that
after every step the number of terms in the expansion doubles.
One can use methods in automatic differentiation
(see \cite{Haro, Brent, Biscani}) to implement the operators involved in the iterative step,
so at every iteration the new step produces $N$ new terms and
it also has the advantage that it corrects the previously
computed lower order terms. Furthermore, on top of being
quadratically convergent the Newton's method is numerically stable.

Moreover, if $\lambda$ satisfies the Diophantine assumptions,
within the regions in $\eps$ where we have good estimates
one can find a sequence of analytic functions converging uniformly in the domain $\G_{r_0}$.
In the set $\G_{r_0}$ we obtain the
convergence of the sequence of approximate solutions,
obtained applying the iterative steps, to the exact
solution with bounds analogous to those in \equ{Kmu}.
Hence, the fast algorithm described before provides also a different
proof of Theorem~\ref{main}.

\subsection{Relations of the previous results  with the theory of monogenic functions}
\label{sec:monogenic}

We recall that a  function $k$ defined on a set $\H \subset
\complex$ is monogenic, when for all $\eps_0$ in $\H$, the limit
\begin{equation} \label{quotientlimit}
\lim_{\eps \to \eps_0} \frac{ k(\eps) - k(\eps_0)}{\eps - \eps_0}
\end{equation}
exists.

When $\H$ is an open set, this is the nowadays current definition of differentiable
functions, but for more complicated sets, since the limit in
\eqref{quotientlimit} is taken only when $\eps \in \H$, the notion may be
very different. The book \cite{Borel17} contains an account of the history
of monogenic functions and their relation with the nowadays
standard notion of differentiability.
A problem of great interest is to study the relation
between monogenic functions in some sets and quasi-analytic classes
(that is, classes of functions such that the function is
determined by its values in a set). The modern standard result
establishing that analytic functions in an open set are determined by
their values on a set with an accumulation point is a predecessor
of those results in which the structure of the set
determines the function. It is also known that the monogenic
properties in domains with enough aperture lead to consequences
for the properties of asymptotic expansions (\cite{Winkler93}).

The result stated in Proposition~\ref{Lindstedt_anywhere}
implies in the classical language that
the functions $K_\eps, \mu_\eps$
are monogenic at many points
(namely, the points for which
$\lambda(\eps)$ is Diophantine with respect to $\omega$)
in the set $\G$.

The relations of KAM theory with the theory of monogenic functions has
been studied in \cite{Kolmogorov57, Arnold65, Herman85b, MarmiS03, MarmiS11, CarminatiMS14}. These
papers investigate the monogenic properties of the solutions, when the frequency
$\omega$ goes into the complex. When $\omega$ has complex coefficients, then
$e^{2 \pi i k \cdot \omega}$ may be bounded away from $1$. Hence, the cohomology
equation  does not present any small divisors and, indeed,
it may be a compact operator.

The approach of the above papers to study the monogenic properties is
very different from ours. They study in great detail the monogenic
properties of the solutions of linearized equation and use this as
the basis of an iterative method. In our case, we just use the
formal power series and the a-posteriori format of Theorem~\ref{mainKAM}.
We take advantage of the non-perturbative properties of Theorem~\ref{mainKAM} and the
fact that the solutions are characterized by solving a functional equation.

\subsection{An improved domain of analyticity}\label{sec:improved}
In this Section we will show that the domain $\G$ can be slightly improved. This does
not contradict Conjecture~\ref{isoptimal}, since the extensions constructed
here are precisely in the places away from the resonances which Conjecture~\ref{isoptimal}
claims are essential.

The following Proposition~\ref{prop:extension} is the main result
of this Section. We show that one can find a larger analyticity
domain for $K$, $\mu$ in such a way that the new domain is
$m$-tangentially accessible at any Diophantine point (the
analyticity of $K$ should be understood in a slightly smaller
domain).

The proof of Proposition~\ref{prop:extension}
is based
on Proposition~\ref{Lindstedt_anywhere} that
established  that we can make a Lindstedt expansion
around any $\eps_0$ sufficiently small provided
that $\lambda(\eps_0)$ is $\omega$-Diophantine.

\begin{proposition} \label{prop:extension}
Under the assumptions of Theorem~\ref{main} and Proposition~\ref{Lindstedt_anywhere}, let the functions
$K:\G_{r_0}\rightarrow \A_\rho$, $\mu:\G_{r_0}\rightarrow \complex^d$ solve \eqref{invariance}.

For any $0 < \rho' < \rho$, we can find an analyticity domain
$\widetilde\G$, such that any point $\eps_0$ sufficiently small is
$m$-tangentially accessible, $m\in \nat$, from both sides in $\widetilde \G$, provided $\eps_0$ is such that
$|\lambda(\eps_0)|=1$ and $\lambda(\eps_0)$ is Diophantine with respect to $\omega$.

The functions $K, \mu$ are analytic from $\G$ to
$\A_{\rho'}, \complex^d$, respectively.
\end{proposition}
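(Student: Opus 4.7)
The plan is to enlarge $\G_{r_0}$ by taking its union with analyticity neighborhoods arising from Lindstedt expansions based at each admissible Diophantine point. First I would fix any $\eps_0$ satisfying the hypotheses of the proposition: since $\lambda(\eps_0)$ is Diophantine with respect to $\omega$ we have $\nu(\lambda(\eps_0);\omega,\tau)<\infty$, so $\eps_0\in\G_{r_0}(A)$ for $A$ large and $N$ as in Theorem~\ref{main}, and the exact pair $(K_{\eps_0},\mu_{\eps_0})$ is already provided by Theorem~\ref{main}. I would then apply Proposition~\ref{Lindstedt_anywhere} at $\eps_0$ to produce a formal Lindstedt expansion $(K_\eps^{[\infty]},\mu_\eps^{[\infty]})$ in powers of $(\eps-\eps_0)$, whose truncations satisfy the invariance equation up to an error $C_N|\eps-\eps_0|^{N+1}$. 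The conceptual gain here is that by re-expanding at $\eps_0$ rather than at $0$, the approximation error vanishes as $\eps\to\eps_0$, whereas the expansion based at $0$ has error bounded below by a power of $|\lambda(\eps_0)-1|^{1/a}$, which is of order one when $|\lambda(\eps_0)|=1$.

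Next I would feed this truncation into Theorem~\ref{mainKAM} as the approximate solution, so that hypothesis \textbf{H5} reduces to
\[
|\eps-\eps_0|^{N+1}\le C\,\nu(\lambda(\eps);\omega,\tau)^{-2}\,\delta^{4(\tau+d)}\ ,
\]
which identifies a neighborhood $U_{\eps_0,N}$ of $\eps_0$ (with small balls excised around each nearby resonance $\tilde\eps_k$ where $\lambda(\tilde\eps_k)=e^{2\pi i k\cdot\omega}$) on which the a-posteriori theorem yields a true solution. By Corollary~\ref{isanalytic}, this solution depends analytically on $\eps$ throughout the interior of $U_{\eps_0,N}$. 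I would then define
\[
\widetilde\G=\G_{r_0}\cup\bigcup_{\eps_0}U_{\eps_0,N_{\eps_0}}\ ,
\]
with $N_{\eps_0}\in\nat$ chosen suitably for each $\eps_0$. Proposition~\ref{prop:normalize} and Lemma~\ref{lem:unique} ensure that the local normalized extensions agree on overlaps, so the construction glues to a single analytic family $(K_\eps,\mu_\eps)$ on $\widetilde\G$ with values in $\A_{\rho'}\times\complex^d$.

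The hard step, and the main obstacle, is verifying $m$-tangential accessibility of every admissible $\eps_0$ from both sides in $\widetilde\G$. Near $\eps_0$ the only obstruction to membership in $U_{\eps_0,N}$ is a ball around each resonance $\tilde\eps_k$. Combining \textbf{H}$\lambda$ with the first-order approximation $\lambda(\eps)-e^{2\pi i k\cdot\omega}\approx\lambda'(\tilde\eps_k)(\eps-\tilde\eps_k)$ gives the lower bound $\nu(\lambda(\eps);\omega,\tau)^{-1}\ge C\,|\eps-\tilde\eps_k|\,|k|^\tau$, so that the excluded ball around $\tilde\eps_k$ has radius at most $C\,|\eps-\eps_0|^{(N+1)/2}\,|k|^{-\tau}$; crucially, this radius vanishes as $\eps\to\eps_0$. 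The Diophantine property of $\lambda(\eps_0)$ forces $|\eps_0-\tilde\eps_k|$ to be bounded below by a polynomial in $|k|^{-1}$, and the $\tilde\eps_k$ close to $\eps_0$ cluster essentially along the smooth curve $\{\eps:|\lambda(\eps)|=1\}$. Choosing the unit direction $u$ tangent to this curve at $\eps_0$, a parabolic region $\Gamma^+(\eps_0,m;\gamma,\delta)$ stays at distance at least $\gamma|t|^m$ from any resonance at parameter $t\approx|\tilde\eps_k-\eps_0|$ along $u$, and inclusion in $U_{\eps_0,N}$ reduces to
\[
\gamma\,t^m\ge C\,t^{(N+1)/2}\,|k|^{-\tau}\ ,
\]
which holds for all small $t$ once $(N+1)/2>m$ and $\gamma$ is sufficiently large. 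Since $N$ is at our disposal, any prescribed $m\in\nat$ can be accommodated, yielding the desired tangential accessibility from both sides.
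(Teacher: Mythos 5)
Your overall strategy matches the paper's: re-expand the Lindstedt series at $\eps_0$ via Proposition~\ref{Lindstedt_anywhere}, feed the truncation into Theorem~\ref{mainKAM} as an approximate solution, and then verify that the smallness condition {\bf H5} holds in a parabolic sector tangent to the curve $\{|\lambda(\eps)|=1\}$, choosing $N$ large relative to $m$. The gluing via normalization and local uniqueness is also in the spirit of the paper. So the skeleton is right.

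Where you diverge from the paper, and where your argument develops a genuine gap, is in how you control $\nu(\lambda(\eps);\omega,\tau)$ inside the parabolic region. You try to estimate it resonance by resonance, writing $\nu(\lambda(\eps);\omega,\tau)^{-1}\ge C\,|\eps-\tilde\eps_k|\,|k|^\tau$. But $\nu(\lambda;\omega,\tau)^{-1}=\inf_{k\ne 0}|e^{2\pi i k\cdot\omega}-\lambda|\,|k|^\tau$, so for any particular $k$ the quantity $|e^{2\pi ik\cdot\omega}-\lambda|\,|k|^\tau$ gives an \emph{upper} bound on $\nu^{-1}$, not a lower bound. To get a lower bound you would have to show that the infimum is actually attained, up to constants, at the nearest resonance --- i.e., that far-away resonances with small $|k|$ cannot dominate. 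That dominance is not automatic, and you give no argument for it. The paper sidesteps this entirely with the crude but robust bound from Remark~\ref{rem:lambda}: whenever $|\lambda(\eps)|\ne 1$,
\[
\nu(\lambda(\eps);\omega,\tau)\le \big|1-|\lambda(\eps)|\big|^{-1}\ ,
\]
which controls \emph{all} resonances at once. In a parabolic region $\Gamma^+(\eps_0,m;\gamma,\delta)$ tangent to the curve $\{|\lambda(\eps)|=1\}$, one has $\big|1-|\lambda(\eps)|\big|\gtrsim\gamma|t|^m$, so {\bf H5} reduces to $|t|^{N+1}\lesssim |t|^{2m}$, satisfied for small $t$ once $N+1\ge 2m$. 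This is the step your analysis of individual balls was meant to achieve; replacing it with Remark~\ref{rem:lambda} closes the gap and in fact makes your excised-ball picture unnecessary. Beyond this one step, your proposal is sound and follows the paper's route.
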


\begin{proof}
To prove Proposition~\ref{prop:extension}, we observe that,
given any point $\eps_0$ as in the assumptions,
using  Proposition~\ref{Lindstedt_anywhere} on the existence of Lindstedt power series
in $\eps-\eps_0$, for any
$N$ we obtain a polynomial that solves the invariance
equation up to an error measured in the $\rho$-norm
which is smaller than $C_N | \eps - \eps_0|^{N+1}$.
Precisely, we construct the polynomials $K_\eps^{[\leq N]} = \sum_{n=0}^{N} K_n (\eps -\eps_0)^n$,
$\mu_\eps^{[\leq N]} = \sum_{n=0}^{N} \mu_n (\eps -\eps_0)^n$, such that
$$
\|f_{\mu_\eps^{[\leq N]},\eps}\circ K_{\eps}^{[\leq
N]}-K_{\eps}^{[\leq N]}\circ T_\omega\|_{\rho'} \leq
C_N|\eps-\eps_0|^{N+1}
$$
for some positive constant $C_N$.

We also observe that, according to Remark~\ref{rem:lambda},
when we move out of the unit circle, the number $\lambda$ becomes
$\omega$-Diophantine with a Diophantine constant which is
bounded from below by the distance to the circle. Precisely, if $|\lambda(\eps)|\not=1$, then we obtain
$$
\nu(\lambda(\eps);\omega,\tau)\leq \big|1-|\lambda(\eps)|\big|^{-1}\ .
$$
If we fix $0<\rho' < \rho$ and we denote $\delta = \rho - \rho'$,
we can apply Theorem~\ref{mainKAM}
provided that the error in {\bf H5} is sufficiently small, namely
\begin{equation}\label{mconditions}
C_N | \eps - \eps_0|^{N+1}\,  \big|1-|\lambda(\eps)|\big|^2 \leq C \nu(\omega;\tau)^2\ \delta^{4(\tau+d)}\ .
\end{equation}
We now observe that, because $\lambda(\eps)$ is a conformal
mapping which is at most tangent to $1$ of order $a$ due to {\bf H$\lambda$},
and $N$ is arbitrarily large, we can find a domain of $\eps$ $m$-tangentially
accessible ($m$ arbitrarily large) where \eqref{mconditions} is satisfied;
then, we can find an exact solution of the invariance equation \equ{invariance} for any $\eps\in\widetilde\G$.
Again we note that this solution can be obtained to be normalized with respect to
\equ{normalized} corresponding to $K_\eps$. Then, the local uniqueness of normalized
solutions tells that this should agree with the original solution.
\end{proof}

It is interesting to compare Proposition~\ref{prop:extension}
to Proposition~\ref{pro:tang}. Of course, they refer to (possibly) different sets:
Proposition~\ref{pro:tang} is based just on a measure theoretic argument
and only asserts the existence of points occupying a set of large measure,
whereas Proposition~\ref{prop:extension} gives a concrete criterion for
the points for which it applies. Of course, given the easy measure estimates of
Diophantine numbers, there must be points for which both propositions apply.

We note that the assumption that $\eps_0$ is sufficiently small can be
substituted with the requirement that there is a solution satisfying
the invariance equation.  We also note that the assumption
$|\lambda(\eps_0)| = 1$ is, of course, not needed.  When
$|\lambda(\eps_0)|\ne 1$, it is easy to show\footnote{Just note that
for all $\eps$ close to $\eps_0$, we have that
$|\lambda(\eps)|\ne  1$ and, in particular,
$\lambda(\eps)$ is Diophantine. For any $\eps$
close enough to $\eps_0$ we can take as initial point of the
iterative step $K_0, \mu_0$. } that we can get a solution defined in
a ball around $\eps_0$.

\begin{remark}
Even if Proposition~\ref{prop:extension} involves a loss of domain in
the $\theta$ variable, this is not a problem, because the regularity in
the parameterization variable can be bootstrapped. Indeed, in
\cite{CallejaCL11}, there is a result establishing that all
finitely differentiable solutions of the invariance equations are
analytic.
\end{remark}

\subsection{Whitney differentiability of the parameterization $K_\eps$
and the drift $\mu_\eps$ on $\G_{r_0}$}
\label{sec:whitney}

In this Section, we establish that the functions $K_\eps, \mu_\eps$
are Whitney differentiable in $\G_{r_0}$.
The argument we present here takes advantage of the fact that
$K_\eps, \mu_\eps$ solve a functional equation.
This allows us
to obtain expressions of the formal derivatives with respect to parameters,
since these formal derivatives satisfy functional equations which
we can solve using the automatic reducibility.
Using the a-posteriori  format of Theorem~\ref{mainKAM}, we obtain that the
formal expansions lead to a small remainder. The rest of the argument
uses that the domain $\G_{r_0}$ is a compensated domain, that means that
the points can be joined by a path in the interior whose length is comparable
to the distance between the points. There are many other similar
problems in KAM theory and it is possible that similar strategies may
work in them.

 Let us start by recalling
the classical  definition of Whitney differentiability
(\cite{Whitney36,Stein70,Fefferman09,Grafakos14}). We note that
this is a very real variable definition. In our case, when we are
considering functions of a complex variable,  we have, of course
that $\complex = \real^2$.

\begin{definition}\label{def:whitney}
Let $\D \subset \real^n$, $n\geq 1$, be a closed set. For $\ell\in\integer_+$, $\ell<p\leq \ell+1$, let $H$ be a Banach space.
We say that the function $h: \D \rightarrow H$
is $C^p$-Whitney differentiable (which we denote by
 $h \in C_{Wh}^p(\D)$),
when for every $x \in \D$ we can find
a collection of continuous functions indexed
by $k \in \nat^n$, say  $\{h^{(k)}_x \}_{0 \le |k| \leq \ell}$,
taking values in $X$, such that
\begin{equation}\label{whitneyderivatives1}
\|h^{(k)}_x \|\leq M\ ,\qquad   0 \le k \leq \ell,\ \ x\in \D
\end{equation}
and for all $x, x+\Delta\in \D$
\begin{equation}\label{Wcond}
\left \|h^{(k)}_{x +\Delta}-\sum_{|j+k|\leq\ell} {1\over {j!}} h^{(j+k)}_x\
\Delta^j \right\|\leq M\ \|\Delta\|^{p-|k|}\ .
\end{equation}
Then, the function $h$ is called $C^p$-Whitney differentiable
in $\D$ with Whitney derivatives $\{h_x^{(k)}\}_{|k|\leq \ell}$.
\end{definition}

If $\D = \real^n$, one can see that the case
$k = 0 $ of \eqref{Wcond} is just the Taylor expansion of
$h$, if $h$ was differentiable.
The cases $k \geq 1$ of \eqref{Wcond} are the Taylor expansions for
the derivatives.  It is known that when $\D = \real^n$, one needs to verify
only
the $k=0$ case of \eqref{Wcond}.
Then, the celebrated \emph{converse Taylor theorem}
(\cite{AbrahamR67, Nelson69}) shows that the function $h$ is indeed $p$-differentiable
 and that therefore, the $j$-th derivative is $(p-j)$-differentiable.

When $\D$ is a small subset of $\real^n$, the Whitney derivatives do not
need to be unique (think for example of the case when $\D$ is
a hyperplane; since \eqref{Wcond} only involves what happens in
the plane, we see that we can put anything we want in
the transverse directions).  Hence, in general, the Whitney derivatives are
not unique and one can see that  the case $k= 0$ in \eqref{Wcond}
does not imply the cases for higher $k$.

The remarkable result of \cite{Whitney36} is
that given a function $h$ which is $C^p$-Whitney differentiable in a closed set
$\D$, there exists a $C^p$-function (in the standard sense) in $\real^n$
which agrees with $h$ on $\D$.  The original result is presented only
for real valued functions, but it is easy to check the proof when the
functions take values in a
Banach space, since the proof follows from the proof in $\real^n$ line by line
(in contrast, the case when $\D$ is contained in a Banach space seems to be an open problem).

\begin{remark}
There are some well known extensions, which  we will not discuss here, but
which we mention. In \cite{Stein70}, one can find
results showing that the extension can be obtained to be
linear in $h$.  Actually, one can construct a linear  extension operator
$E_l$ which works for all $ p \in (l, l+1]$. It turns out that each different
$l$ requires a different construction. We note that there are
more sophisticated constructions, when the bounds  \eqref{Wcond} are
uniform on compact sets, but may be not uniform in $\D$.
The construction of \cite{Whitney36}
allows the case of infinite derivatives. It also shows that the extended
function is real analytic in the complement of $\D$. A survey of recent
developments in the problem appears in \cite{Fefferman09}.
\end{remark}

\begin{proposition} \label{whitney}
For given $\eps$ and under the hypotheses of Theorem~\ref{main}, we have
that the functions that produce the expansions of the normalized solution
of \eqref{invariance0}
given by Proposition~\ref{Lindstedt_anywhere}, say $K_n^{(\eps)}$ and $\mu_n^{(\eps)}$,
are continuous on $\G_{r_0}$ and
analytic in the  interior of $\G_{r_0}$, when we give the topology of
$\A_\rho  \times \complex$ to $K_n^{(\eps)}$ and $\mu_n^{(\eps)}$.

We also have that $K_n^{(\eps)}$ and $\mu_n^{(\eps)}$ are
$C^\infty$ in the sense of Whitney in $\G_{r_0}$, when we give to
the ranges the topology of $\A_{\rho'}\times \complex^d$ for any
$0< \rho' < \rho$.
\end{proposition}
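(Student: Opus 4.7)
The plan is to verify the three assertions of the proposition in order: continuity on $\G_{r_0}$, analyticity on its interior, and Whitney $C^\infty$ regularity. For the first two, I would exploit the fact that the Lindstedt coefficients $K_n^{(\eps)}, \mu_n^{(\eps)}$ are obtained, via Proposition~\ref{Lindstedt_anywhere}, by recursively solving cohomology equations of the form \equ{twisted_cohomology} whose data (the matrices $M, N, S$ evaluated at $K_\eps$, together with $\lambda(\eps)$) depend continuously on $\eps \in \G_{r_0}$ and analytically on $\eps$ in $\Int(\G_{r_0})$. Since on $\G_{r_0}$ the Diophantine constant $\nu(\lambda(\eps);\omega,\tau)$ is uniformly bounded (this is precisely the definition of $\G_{r_0}$), Lemma~\ref{cohomology} gives solutions that depend continuously (respectively analytically) on the data in $\A_{\rho'}$. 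An independent proof of analyticity on $\Int(\G_{r_0})$ is available: by the uniqueness Lemma~\ref{lem:unique}, the Lindstedt expansion at an interior $\eps_0$ coincides with the Taylor series of the analytic function $\eps \mapsto K_\eps$, so Cauchy's formula $K_n^{(\eps_0)} = (2\pi i)^{-1}\oint K_\zeta (\zeta-\eps_0)^{-n-1} d\zeta$ exhibits the analytic dependence.

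For the Whitney part, I would identify the formal Whitney partial derivatives of order $k=(k_1,k_2) \in \nat^2$ as $h_\eps^{(k)} = i^{k_2} (k_1+k_2)!\, K_{k_1+k_2}^{(\eps)}$, which is forced by the Cauchy--Riemann equations at interior points. The $k=0$ Whitney compatibility condition
\[
\left\| K_{\eps+\Delta} - \sum_{n=0}^{N} K_n^{(\eps)} \Delta^n \right\|_{\rho'} \le C_N |\Delta|^{N+1}
\]
is then immediate from Corollary~\ref{cor:monogenic}: the truncated Lindstedt polynomial at $\eps$ solves the invariance equation at $\eps+\Delta$ with error $O(|\Delta|^{N+1})$, and by the local uniqueness of normalized solutions (Lemma~\ref{lem:unique}) it differs from $K_{\eps+\Delta}$ by the same order.

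The main work is the compatibility for $k \ge 1$, which requires
\[
\left\| K_k^{(\eps+\Delta)} - \sum_{j=0}^{N-k} \binom{j+k}{k} K_{j+k}^{(\eps)}\, \Delta^j \right\|_{\rho''} \le C_{N,k}\, |\Delta|^{N-k+1}
\]
with a further mild loss of domain. Here I would exploit the compensated structure of $\G_{r_0}$: any two points $\eps, \eps+\Delta \in \G_{r_0}$ can be joined by a path $\gamma \subset \Int(\G_{r_0})$ whose length is $\le C|\Delta|$ and which stays at distance $\gtrsim |\Delta|$ from $\partial \G_{r_0}$. Along $\gamma$ the solution $K$ is analytic, so its derivatives are controlled \emph{not} by naive Cauchy estimates (which would blow up as $\gamma$ approaches the boundary) but by the uniform-in-$\eps$ bounds of Corollary~\ref{derivatives_bounded}, which exploit the functional equation. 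Integrating $(d/d\zeta)^{k+1} K_\zeta$ over $\gamma$ and applying the Taylor remainder formula (or, equivalently, differentiating the $k=0$ identity with respect to $\Delta$ using a family of contour integrals) then yields the required bound, with the uniqueness of Lindstedt expansions again ensuring that the integrated derivative at the endpoint equals $k!\, K_k^{(\eps+\Delta)}$.

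The main obstacle I anticipate is precisely the $k \ge 1$ case near boundary points of $\G_{r_0}$: the estimate must hold with constants that do not deteriorate as $\eps$ approaches one of the excluded balls. This is where the interplay between the functional-equation estimates of Corollary~\ref{derivatives_bounded} (uniform in $\eps$ but with a loss of strip width) and the compensated geometry of $\G_{r_0}$ (ensuring paths of controlled length and controlled distance from the boundary) becomes essential; the standard complex-analytic tools alone would not suffice, because a small disk around a boundary point need not lie inside $\G_{r_0}$. Once these uniform bounds are in place for each pair $(N,k)$, the $C^\infty$-Whitney regularity on $\G_{r_0}$ follows at once from Definition~\ref{def:whitney}.
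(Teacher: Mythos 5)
Your proposal is correct and follows essentially the same route as the paper. The decomposition into continuity $\Rightarrow$ interior analyticity $\Rightarrow$ $k=0$ Whitney condition via Corollary~\ref{cor:monogenic} and local uniqueness $\Rightarrow$ $k\ge 1$ Whitney condition via the compensation property of $\G_{r_0}$ together with the functional-equation derivative bounds of Corollary~\ref{derivatives_bounded} is precisely the strategy of the paper's proof; the final step in the paper is the Lagrange (iterated-integral) form of the Taylor remainder along a compensated path $\gamma$, which matches your ``integrate $(d/d\zeta)^{k+1}K_\zeta$ over $\gamma$'' step.

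Two small remarks. First, for the continuity of $\eps\mapsto K_n^{(\eps)}$ you argue by recursion on the cohomology equations; the paper's primary argument is instead a Montel compactness plus closed-graph argument (with the recursive proof given as an alternative), but both are fine and appear in the text. Second, your description of the compensated path says it ``stays at distance $\gtrsim |\Delta|$ from $\partial\G_{r_0}$.'' Proposition~\ref{prop:compensation} guarantees only that $\gamma$ lies in $\Int(\G_{r_0})$ except at its endpoints and has length $\le \pi|\Delta|$; when $\eps$ or $\eps+\Delta$ is a boundary point, the path necessarily approaches $\partial\G_{r_0}$, so no such distance bound holds. This does not damage your argument, because -- as you yourself point out -- you never invoke naive Cauchy estimates that would need a safety distance: the bounds of Corollary~\ref{derivatives_bounded} are uniform all the way to $\partial\G_{r_0}$ (at the price of shrinking $\rho$), which is exactly what makes the Taylor-remainder estimate along $\gamma$ close. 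Just drop the distance claim and the write-up matches the paper's.
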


\subsubsection{Proof of Proposition~\ref{whitney}}

Given one normalized solution, we have proved in
Proposition~\ref{Lindstedt_anywhere} that we can find
series expansions of the normalized
solutions  around any point in $\G_{r_0}$. These will be
the Whitney derivatives.

We note that we have produced the derivatives as polynomials in
the complex variable $\eps$. Of course, $\eps^j$ is a multilinear function
of degree $j$ of the real variables which are the components of
the complex number $\eps$.  We will use the notation of complex powers
to keep the notation we have used in the previous sections.

We will now verify that the components of
the  expansions $K_n^{(\eps)}, \mu_n^{(\eps)}$
 verify all the properties
in the Definition~\ref{def:whitney}.

To verify that  the functions $K_n^{(\eps)}$ , $\mu_n^{(\eps)}$ are continuous,
we just observe that the  $K_n^{(\eps)}$ are bounded in $\A_{\rho'}$.
Hence, by Montel's theorem \cite{SaksZ65} they lie in a precompact set
in $A_{\rho''}$ for any $0 < \rho'' < \rho'$. When we have a function
from a compact set to a compact set, continuity is equivalent to having
a closed graph. We can check that if we have a sequence $\eps_j \to \eps$
and $K_n^{(\eps_j)} \to K_n^{(\eps)}$, $\mu_n^{(\eps_j)}\to \mu_n^{(\eps)}$,
 we can see that the limiting functions $K_n^{(\eps)}$ , $\mu_n^{(\eps)}$ satisfy
\eqref{Lindstedt_arbitrary} and it is also normalized. Using
that the solution of \eqref{invariance0} is unique whenever it satisfies
the normalization
condition, we obtain that the solution is indeed the $K_n^{(\eps)}$.

An alternative proof of the continuity of
the expansions is obtained by induction. We have shown that
$K$ is continuous in $\eps$ and that the reducibility matrix is
continuous in $\eps$ in a slightly smaller domain in $\theta$. The expansion is
computed by recursion. It just requires to evaluate a polynomial
in the previous terms, multiplying by the automatic reducibility matrix,
solving the cohomology equation,
and applying other operations. Hence, by recursion, it is easy to
see that we obtain that the terms of the expansions are continuous,
provided that at every step we decrease (by an arbitrarily small
amount) the domain of analyticity in $\theta$.

We also note that  Corollary~\ref{cor:monogenic} shows
that the derivatives  satisfy the Taylor estimates. This is
the condition \eqref{Wcond} for $k = 0$. Of course, in
the interior points, we obtain that this is indeed the derivative
and, since it is a complex derivative, the function is analytic
in a neighborhood (this gives an independent proof of the analyticity
of the functions in the interior of $\G_{r_0}$).

The only thing that remains to verify in the
Definition~\ref{def:whitney} of
Whitney derivatives is the condition \eqref{Wcond} for $k> 0$. By the previous
remark, we could restrict ourselves to the case that one of the points
is close to the boundary; nevertheless, we will not need that.

As we mentioned before, the
condition for $k > 1$ in Definition~\ref{def:whitney},
 unfortunately, is not automatic. In \cite{LlaveO99, LiL10} one
can find examples where this fails.

The set we are considering, however, is much simpler than many other
closed sets. It is a connected set and indeed it has a big interior.
The key property is that the sets
we consider have the \emph{compensation} property, namely for any pair
of points in the domain, one can find a path joining them, whose
length is less or equal than a constant times the distance between the
points.
The set $\G_{r_0}$ satisfies a very strong form of the
compensation property as shown in Proposition~\ref{prop:compensation}
below.
\begin{proposition}\label{prop:compensation}
Given two points $\eps_1, \eps_2 \in \G_{r_0}$, there is a differentiable
path $\gamma$ starting in $\eps_1$, finishing in $\eps_2$ and contained in
$\G_{r_0}$, indeed, contained in
the interior of $\G_{r_0}$ except for the endpoints.  Moreover
$|\gamma|$, the length of $\gamma$, satisfies the inequality
\begin{equation} \label{eq:compensation}
|\gamma| \le \pi |\eps_1 - \eps_2|\ .
\end{equation}
\end{proposition}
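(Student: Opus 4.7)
The plan is to take the straight segment $L$ from $\eps_1$ to $\eps_2$ and perturb it only where it crosses excluded balls, by detouring along the short boundary arc of a slightly enlarged concentric disk. Since $\G_{r_0}\subset\{|\eps|\le r_0\}$ is contained in a convex disk, $L\subset\{|\eps|\le r_0\}$. For $r_0$ small enough, I would first verify that the excluded disks making up $\G_{r_0}^c$ are pairwise disjoint: in the $\lambda$-plane the bad balls have centers $e^{2\pi ik\cdot\omega}$ separated by at least $C\nu(\omega,\tau)^{-1}|k_1-k_2|^{-\tau}$ and radii proportional to $|\lambda-1|^{N+1}|k|^{-\tau}$, which is much smaller when $|\lambda-1|\le C r_0^a$ is small; pulling back by the locally $a$-to-one map $\eps\mapsto\lambda(\eps)-1\approx\alpha\eps^a$ (from hypothesis {\bf H}$\lambda$) preserves pairwise disjointness for $r_0\ll 1$.

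Second, let $\{B_j\}$ enumerate the (countably many) excluded disks that meet the open segment $L$, with centers $c_j$, radii $r_j$, and let $p_j^{\pm}$ be the two intersection points of $L$ with $\partial B_j$; set $c_j=|p_j^+-p_j^-|\le 2r_j$. I would replace $[p_j^-,p_j^+]$ by the short arc joining $p_j^-$ to $p_j^+$ along the boundary of the disk concentric with $B_j$ of radius $r_j(1+\eta_j)$, choosing $\eta_j>0$ so small that the enlarged disks remain pairwise disjoint and disjoint from all other excluded disks. This is possible because of the very fast decrease of the $r_j$'s noted in the Remark after the introduction: one can, for instance, choose $\eta_j$ proportional to the gap from $B_j$ to the rest of $\G_{r_0}^c$. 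Corners where each arc meets a straight sub-segment of $L$ are then smoothed by small circular fillets, adding arbitrarily little length while making $\gamma$ differentiable.

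Third, I would bound $|\gamma|$ using the elementary chord-arc inequality: on a circle of radius $r$, a chord of length $2r\sin\phi$ subtends a minor arc of length $2r\phi$, and $\phi/\sin\phi\le\pi/2$ for $\phi\in(0,\pi/2]$, so each detour arc has length at most $(\pi/2)(1+\eta_j)c_j$. Since the $B_j$ are disjoint, $\sum_j c_j\le|\eps_1-\eps_2|$, hence
\[
|\gamma|\le\Bigl(|\eps_1-\eps_2|-\sum_j c_j\Bigr)+\sum_j\tfrac{\pi}{2}(1+\eta_j)\,c_j\le\bigl(\tfrac{\pi}{2}+\eta\bigr)|\eps_1-\eps_2|,
\]
which lies below $\pi|\eps_1-\eps_2|$ provided $\eta$ (together with the length added by smoothing) is chosen below $\pi/2$. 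The main technical obstacle will be the case that infinitely many $B_j$ meet $L$: here I would lean on the extreme decay of the $r_j$'s and on the pairwise disjointness established in paragraph 1 to select a summable compatible family $\{\eta_j\}$, ensuring that consecutive detour arcs neither cross each other nor re-enter other excluded disks.
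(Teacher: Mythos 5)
Your proposal takes a genuinely different route from the paper's, and it helps to contrast them. The paper constructs a \emph{single} circle $\B$ centered on the curve $\{\eps:|\lambda(\eps)|=1\}$ (at its intersection with the perpendicular bisector of $[\eps_1,\eps_2]$) passing through both $\eps_1$ and $\eps_2$, and takes an arc of $\partial\B$. Since the excluded balls are all centered on that same curve, the distance from a point of the arc to any such center varies monotonically along the arc and is therefore extremized at the endpoints $\eps_1,\eps_2$; as those two points lie in $\G_{r_0}$ and hence outside every excluded ball, so does the whole arc, and the elementary arc--chord estimate gives $|\gamma|\le\pi|\eps_1-\eps_2|$ at once. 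This one-shot construction makes all the bookkeeping about disjointness, countability of the bad balls, and smoothing simply irrelevant. Your construction, which detours around each excluded ball separately, is plausible in spirit and would even give the sharper constant $\pi/2$, but it lands precisely on the technical burdens the paper's circle avoids, and those are left unresolved. Concretely: (i) pairwise disjointness of the excluded balls (and of the enlarged ones) is asserted, not proved; comparing the gap $\gtrsim\nu(\omega;\tau)^{-1}|k_1-k_2|^{-\tau}$ with radii $\propto|k_i|^{-\tau}|\lambda-1|^{N+1}$ does not close when $|k_1|$ and $|k_2|$ are of very different magnitudes, which the Diophantine bounds alone do not exclude. (ii) The detour as written is geometrically ill-posed: $p_j^\pm$ lie on $\partial B_j$, not on $\partial B_j(1+\eta_j)$, so the proposed arc does not start or end at them; one must first extend the chord to the enlarged circle, and then the chord-sum estimate $\sum_j c'_j\le|\eps_1-\eps_2|$ again requires disjointness of the \emph{enlarged} balls. (iii) The choice of a compatible, summable family $\{\eta_j\}$ for possibly infinitely many balls meeting the segment, plus the non-interference of consecutive detours and smoothing fillets, is exactly the heart of the matter and is flagged in your last paragraph as ``the main technical obstacle'' without resolution. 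None of these gaps is obviously fatal, but together they constitute most of the argument, whereas the paper's single-arc construction is designed precisely so that none of this work is needed.
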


\begin{proof}
Of course, if the domain $\G_{r_0}$ was a disk -- or any convex set --
the result would be trivial, since we could join two points
$\eps_1, \eps_2$ by a straight line which stays in the disk.

The only thing that we have to consider is the case that some of
the excluded circles from the boundary interrupts the straight line.

We consider the ball $\B$  centered at the boundary of $\{ \eps \in \complex
:|\lambda(\eps)|  \le 1 \}$ that passes
through $\eps_1, \eps_2$. This ball can be easily constructed, since the
center is the mediatrix between the points $\eps_1, \eps_2$ and the
boundary of the disk. This point is unique if we are considering
$A$ sufficiently small, because of the implicit function theorem.

Now we observe that any ball centered in the boundary which intersects
the segment $\eps_1 -\eps_2$ and which does not include any of the points
$\eps_1$, $\eps_2$ has to be contained in $\B$.

Then, to connect the two points we can move along the boundary of
the disk $\B$. Of course, being an arc of disk the path is bounded
by $\pi$ times the distance of the points (the bound is saturated when
the points are in opposite sides of a diameter).
\end{proof}

Using Proposition~\ref{prop:compensation}, the result of Proposition~\ref{whitney}
follows by a standard argument (see \cite{LiL10}).
Given $\eps_1,\eps_2\in\G_{r_0}$,
let $\gamma$ be a differentiable path contained in $\G_{r_0}$,
starting in $\eps_1$ and ending in $\eps_2$.
Using the fundamental theorem of calculus, we
write, as in the Lagrange version of Taylor's theorem:
\[
\begin{split}
K^{(\eps_2)}_n - K^{(\eps_1)}_n
&= \sum_{j = 1}^N \frac{1}{j!} \frac{d^j}{d\eps^j}
K_n^{(\eps_1)} (\eps_2 - \eps_1)^j\\
&+  \int_0^1 ds_1 \int_0^{s_1} ds_2 \, \cdots
\int_0^{s_{N-1}} ds_N\  \frac{d^{N+1}}{d\eps^{N+1}}
 K_n^{(\gamma(s_1 s_2 \cdots s_N))}
s_1 (s_1 s_2) \cdots (s_1 s_2\cdots s_{N-1})\\
&\qquad \gamma'(s_1) \gamma'(s_1 s_2)  \cdots \gamma'(s_1 s_2 \cdots s_N)\ .
\end{split}
\]

Since the path $\gamma$ is contained in the interior of the domain, we
have already shown that the terms of the Lindstedt expansions are the
complex derivatives of the function $K_\eps$ and therefore
that $K_\eps$ is analytic in the interior.  Hence, it follows that the
higher order derivatives are the derivatives of the lower
order derivatives and the functions
$K_n^{(\eps)}$ are  analytic in $\eps$.

Hence, we obtain
\[
\left\| K_n^{(\eps_2)} - K_n^{(\eps_1)} -
\sum_{j = 1}^N \frac{1}{j!} \frac{d^j}{d\eps^j}
K_n^{(\eps_1)} (\eps_2 - \eps_1)^j \right\|_{\rho'}
\le C |\eps_2 -\eps_1|^N\ ,
\]
which is the condition \eqref{Wcond} for Whitney differentiability.

The proof for $\mu$ goes along the same lines and is even more elementary.
\qed

\appendix

\section{The case of flows}\label{sec:flows}

In this Section, we extend the results of the paper to the case of flows. We recall that a family of vector fields
$\F_{\mu,\eps}$ is conformally symplectic, if there exists a function $\chi=\chi(\eps)$ with $\chi(0)=0$,
such that
$$
L_{\F_{\mu, \eps}} \Omega = \chi(\eps) \Omega\ ,
$$
(see \equ{fl}). The invariance equation for $(K_\eps, \mu_\eps)$ is the differential equation:
\beq{invf}
\partial_\omega K_\eps(\theta)=\F_{\mu_\eps,\eps}\circ K_\eps(\theta)\ ,
\eeq
where, for short, we denote the partial derivative as $\partial_\omega=\omega\cdot\partial_\theta$,
the unknowns are $K_\eps$, $\mu_{\eps}$.

We look for Lagrangian tori satisfying the relation
$$
DK_\eps(\theta)^\top\ J\circ K_\eps(\theta)\ DK_\eps(\theta)=0\ .
$$
The definitions~\ref{vectordiophantine} and \ref{lambdadiophantine} on the Diophantine property
of the frequency need to be modified as follows.

\begin{definition}\label{vectordiophantinef}
Let $\omega \in \real^d$, $\tau \in \real_+$.
We define the quantity $\nu(\omega;\tau)$ as
$$
\nu(\omega;\tau) \equiv \sup_ {k \in \integer^{d} \setminus \{0\}}
|\omega\cdot k|^{-1} |k|^{-\tau}\ .
$$
In the supremum above we allow infinity and we set $\nu(\omega;\tau) = \infty$,
whenever $|\omega\cdot k|=0$.

We say that  $\omega\in \real^d$ is Diophantine of
class $\tau$ and constant $\nu(\omega; \tau)$, if
\[
\nu(\omega; \tau) < \infty
\]
and we denote by $\D_d(\nu, \tau)$ the set of Diophantine vectors in $\real^d$ of class $\tau$ and
constant $\nu$.
\end{definition}

We also have the following definition.

\begin{definition} \label{lambdadiophantinef}
Let $\chi \in \complex$,  $\omega \in \real^d$, $\tau \in \real_+$.
We define the quantity $\nu(\chi; \omega, \tau)$ as
$$
\nu(\chi; \omega, \tau) \equiv
\sup_
{k \in \integer^{d} \setminus \{0\}}
|i\omega\cdot k+\chi |^{-1} |k|^{-\tau}\ .
$$
\end{definition}

We remark that, setting $\chi\equiv \chi_r+i\chi_i$, then we have the following inequality:
$$
\Big(|i\omega\cdot k+\chi |\ |k|^{\tau}\Big)^2 \geq |i\omega\cdot k+\chi |^2 \geq \chi_r^2+(\omega\cdot k+\chi_i)^2
\geq \chi_r^2\ ,
$$
from which we conclude that for $\chi_r\not=0$:
$$
\nu(\chi; \omega, \tau)\leq |\chi_r|^{-1}<\infty
$$
for any $\tau$. Therefore, the critical case becomes $\chi_r=0$, for which it could happen that
$\nu(\chi; \omega, \tau)=\infty$ (for example when $\chi_i=-\omega\cdot k$).

\vskip.1in

We assume that the function $\chi=\chi(\eps)$ is analytic in a neighborhood of $\eps=0$ and that
$\chi(0)=0$; this leads us to require that $\chi$ satisfies (compare with assumption {\bf H$\lambda$} for the
mapping case):

{\bf H$\bf{\chi}\qquad\qquad\qquad\qquad\qquad$} $\chi(\eps) = \alpha \eps^a + O(|\eps|^{a+1})$

\noindent
for some $a\in\integer_+$, $\alpha\in\complex$.

In analogy to \equ{goodset} and \equ{Lambdadefined}, we define the set $\G=\G(A; \omega,\tau, N,\chi)$
and its preimage under $\chi$, say $\Lambda=\Lambda(A; \omega, \tau, N)$, as follows:
\beqa{gf}
\G(A; \omega,\tau, N,\chi)&=&\{ \eps \in \complex : \quad \nu(\chi(\eps); \omega, \tau) |\chi(\eps)|^{N+1} \le A \}\nonumber\\
\Lambda(A; \omega, \tau, N)&=&\{ \chi \in  \complex  : \quad
\nu(\chi; \omega, \tau) |\chi(\eps)|^{N+1} \le A \}\ ,
\eeqa
where $A>0$, $N\in\integer_+$. Moreover, let
\beq{AGr0}
\G_{r_0}(A; \omega,\tau, N,\chi)  = \G \cap \{\eps\in\complex:\  |\eps| \le r_0\}\ .
\eeq

The equivalent of the cohomological equation \equ{twisted_cohomology} is given by
\beq{cohf}
\partial_\omega \varphi(\theta)+\chi \ \varphi(\theta)=\chi(\theta)\ ,
\eeq
where $\chi=\chi(\theta)$ is a known function with zero average. The solution of \equ{cohf} can be found
through a result extending Lemma~\ref{cohomology}, under the assumption
that $\omega$ is a Diophantine vector.

The automatic reducibility is ensured by the following result, which has been proven in \cite{CallejaCL11}.

\begin{proposition}\label{prop:flow1} (see \cite{CallejaCL11})
Let $N(\theta)\equiv(DK(\theta)^\top DK(\theta))^{-1}$ and let
$$
M(\theta)\equiv[DK(\theta)\,|\, JDK(\theta)N(\theta)]\ .
$$
Setting $A\equiv \nabla \F_{\mu_\eps,\eps}\circ K$, one has:
$$
\partial_\omega M(\theta)-A(\theta)M(\theta) = M(\theta)
\begin{pmatrix} 0 & S(\theta)\\ 0&\chi\, {\rm Id} \end{pmatrix}
$$
with
\beq{Sdeff}
S(\theta)\equiv N(\theta)DK(\theta)^\top J\Big(A(\theta)+A(\theta)^\top\Big) DK(\theta)N(\theta)\ .
\eeq
\end{proposition}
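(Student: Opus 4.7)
The plan is to adapt the automatic reducibility identity used for maps (see \equ{reduction}) to the continuous-time setting, verifying the claimed block-matrix identity column block by column block.

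First, differentiating the invariance equation \equ{invf} with respect to $\theta$ yields $\partial_\omega(DK) = (\nabla \F_{\mu_\eps,\eps}\circ K)\, DK = A\, DK$, which matches the first column block of the identity (both sides vanish after subtracting $A\, DK$, consistent with the zero first column of the $2\times 2$ block on the right-hand side).

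For the second column block, I would compute $\partial_\omega(J\, DK\, N) - A\, J\, DK\, N$ and show it equals $DK\cdot S + \chi\, J\, DK\, N$. Three ingredients enter. (i) The conformally symplectic hypothesis $L_\F\Omega = \chi\Omega$, written in coordinates as $\F\cdot\nabla J + A^\top J + JA = \chi J$ and restricted along $K$ (using $\F\circ K = \partial_\omega K$), produces the infinitesimal identity $\partial_\omega \hat J = \chi\hat J - A^\top\hat J - \hat J A$, where $\hat J = J\circ K$. (ii) Differentiating the relation $N^{-1} = DK^\top DK$ along $\omega$ and substituting the formula from the first paragraph gives $\partial_\omega N = -N\, DK^\top(A + A^\top)DK\, N$. (iii) The Lagrangian property $DK^\top \hat J\, DK = 0$ is used to isolate the $DK$-component in the residual term. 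Substituting (i) and (ii) into the product-rule expansion of $\partial_\omega(J\, DK\, N)$, the term $\hat J A\, DK\, N$ cancels against the contribution coming from $J\partial_\omega(DK)N$, the diagonal piece $\chi\, J\, DK\, N$ is extracted, and the remainder collapses to a multiple of $DK$. Projecting via the left inverse $N\, DK^\top$ of $DK$ then yields the stated formula $S = N\, DK^\top J(A + A^\top)DK\, N$, with the symmetrization $A + A^\top$ contributed jointly by the conformally symplectic identity and by $\partial_\omega N$.

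The main obstacle is the careful algebraic bookkeeping in this second block: ensuring the cancellation of $\hat J A\, DK\, N$, checking that the Lagrangian condition kills any residual $J\, DK\, N$-component of the remainder so that what is left truly lies in the range of $DK$, and verifying that the two independent appearances of $A + A^\top$ combine with the correct sign to produce the stated symmetric expression rather than cancelling. The computation is parallel to the proof of the reducibility identity \equ{reduction} carried out in \cite{CallejaCL11} for the map case, with the multiplicative conformal factor $\lambda$ replaced throughout by the additive weight $\chi$ appearing in the continuous-time analog of the conformally symplectic relation.
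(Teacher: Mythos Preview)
The paper does not actually prove Proposition~\ref{prop:flow1}; it simply records the statement and attributes the proof to \cite{CallejaCL11}. So there is no in-paper argument to compare against.

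That said, your outline is the standard proof of automatic reducibility for conformally symplectic flows and is essentially the argument carried out in \cite{CallejaCL11}. The structure is correct: the first column block follows immediately from differentiating the invariance equation \equ{invf}, and for the second column block the three ingredients you name --- the infinitesimal conformally symplectic identity $\partial_\omega\hat J = \chi\hat J - A^\top\hat J - \hat J A$, the formula $\partial_\omega N = -N\,DK^\top(A+A^\top)DK\,N$, and the Lagrangian condition $DK^\top\hat J DK = 0$ --- are exactly what is needed. The cancellation of $\hat J A\,DK\,N$ against $\hat J(\partial_\omega DK)N$ that you flag does occur. One point you should be explicit about when you carry out the bookkeeping: after extracting the $\chi\,\hat J DK N$ piece, the residual is not automatically in the range of $DK$ from the Lagrangian condition alone; verifying that its $\hat J DK N$-component vanishes (equivalently, that $-DK^\top\hat J$ applied to the residual gives zero) requires one further use of the conformal identity in the form $\hat J(A+A^\top)\hat J = (A+A^\top) - 2\chi\,\mathrm{Id}$ (in the case $\hat J^2 = -\mathrm{Id}$), and this is also where the precise sign in front of $\chi\,\mathrm{Id}$ in the block matrix is fixed. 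Once that is checked, projecting via $N\,DK^\top$ produces the stated formula for $S$.
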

We now assume to start
with an approximate solution $(K_a,\mu_a)$, satisfying the invariance equation \equ{invf}:
$$
\partial_\omega K_a(\theta)-\F_{\mu_a,\eps}\circ K_a(\theta)=E(\theta)
$$
with an error term $E=E(\theta)$. Then, we can prove the equivalent of Theorem~\ref{mainKAM},
provided that the non--degeneracy condition is replaced by
$$
\det
\left(
\begin{array}{cc}
  {\overline S} & {\overline {S(B_{b})^0}}+\overline{\widetilde A_{1}} \\
  \chi\Id & \overline{\widetilde A_{2}} \\
 \end{array}%
\right) \ne 0\ ,
$$
where $S$ is defined in \equ{Sdeff}, $\widetilde A$ is given by
$$
\widetilde A=[\widetilde A_1|\widetilde A_2]=-M^{-1}\ (\nabla_\mu \F_{\mu_\eps,\eps}\circ K)
$$
and $(B_{b})^0$ solves the differential equation:
$$
\partial_\omega (B_{b})^0+\chi (B_{b})^0=-(\widetilde A_2)^0\ .
$$
For completeness, we conclude by stating the main theorem for the case of vector fields.

\begin{theorem}\label{mainf}
Let $\M\equiv\torus^d\times\B$ with $\B\subseteq\real^d$ an open, simply connected domain with smooth
boundary as in Theorem~\ref{main}.
Let $\omega \in \real^d$ satisfy Definition~\ref{vectordiophantinef} and let
$\F_{\mu,\eps}$  be a family of conformally symplectic vector fields with
conformal factor $\chi=\chi(\eps)$ as in {\bf H}$\chi$.
We assume that for $\eps = 0$, the vector field $\F_{\mu,0}$ is symplectic.

Assume that for $\mu=\mu_0$ the flow $\F_{\mu_0, 0}$ admits a Lagrangian invariant torus
with embedding $K_0 \in \A_\rho$, $\rho > 0$, satisfying
$$
\partial_\omega K_0(\theta)-\F_{\mu_0,\eps}\circ K_0(\theta)=0\ .
$$
Define the following quantities:
\beqano
N_0(\theta)&=&\Big(DK_0(\theta)^\top DK_0(\theta)\Big)^{-1}\nonumber\\
M_0(\theta)&=&[DK_0(\theta)\,|\, J\, DK_0(\theta)N_0(\theta)]\nonumber\\
\widetilde A_0&=&[\widetilde A_{10}|\widetilde A_{20}]=-M_0^{-1}\ (\nabla_\mu \F_{\mu_0,\eps}\circ K_0)\nonumber\\
A_0(\theta)&=&\nabla \F_{\mu_0,\eps}\circ K_0\nonumber\\
S_0(\theta)&=& N_0(\theta)DK_0(\theta)^\top J\Big(A_0(\theta)+A_0(\theta)^\top\Big)DK_0(\theta)N_0(\theta)\ .
\eeqano
Assume that the torus $K_0$ satisfies the following assumption.

{\bf HND'} Let the following non--degeneracy condition be satisfied:
$$
\det
\left(
\begin{array}{cc}
  {\overline S_0} & {\overline {S_0(B_{b0})^0}}+\overline{\widetilde A_{10}} \\
  0 & \overline{\widetilde A_{20}} \\
 \end{array}%
\right) \ne 0\ ,
$$
where $(B_{b0})^0$ is the solution of the cohomology equation
$$
\partial_\omega (B_{b0})^0+\chi (B_{b0})^0=-(\widetilde A_{20})^0\ .
$$

Then, we have the following result.

A) We can find a formal power series expansion $K_{\eps}^{[\infty]}$, $\mu_\eps^{[\infty]}$
satisfying \eqref{invf} in the sense that, expanding $K_{\eps}^{[\infty]}$, $\mu_\eps^{[\infty]}$ in power series
and taking a truncation to order $N\in\nat$ as
$K_{\eps}^{[\le N]} = \sum_{j=0}^{N}\eps^j  K_j$, $\mu_{\eps}^{[\le N]} = \sum_{j=0}^{N}\eps^j  \mu_j$,
then we have:
$$
|| \partial_\omega K^{[\le N]}_\eps -  \F_{\mu_{\eps}^{[\le N]},\eps}\circ K_\eps^{[\le N]}||_{\rho'} \le C_N |\eps|^{N+1}
$$
for some $0<\rho' < \rho$ and for some constant $C_N>0$.

B) For any $N \in \nat$, we can find a set $\G_{r_0}$ as in \equ{AGr0}
with $r_0$ sufficiently small and for any $0 < \rho' < \rho$ we can find
$K_\eps: \G_{r_0} \rightarrow \A_{\rho'}$, $\mu_\eps:  \G_{r_0} \rightarrow \complex^d$,
which are analytic in $\G_{r_0}$ and extend continuously to the boundary of $\G_{r_0}$,
such that for every $\eps\in\G_{r_0}$ they satisfy exactly the invariance equation:
$$
\partial_\omega K_\eps - \F_{\mu_\eps,\eps}\circ K_\eps=0\ .
$$
Moreover, the exact solution has the formal power series of part A) as an asymptotic expansion, in the sense that
\beqano
||K^{[\le N]}_\eps -  K_\eps||_{\rho'}  &\le& C_N |\eps|^{N+1} \ ,\nonumber\\
|\mu^{[\le N]}_\eps -  \mu_\eps|  &\le& C_N |\eps|^{N+1}\ .
\eeqano
\end{theorem}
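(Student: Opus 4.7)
The plan is to mirror exactly the strategy used for maps (Sections~\ref{sec:Lindstedt} and~\ref{sec:partB}), replacing at every step the map-theoretic ingredients with their flow analogues already established in this appendix: the reducibility identity of Proposition~\ref{prop:flow1}, the complex Diophantine condition of Definition~\ref{lambdadiophantinef}, and the twisted cohomological equation \equ{cohf}. In particular, I would first state and (sketch the proof of) a flow a-posteriori KAM theorem, obtained by walking through the proof of Theorem~\ref{mainKAM} with these substitutions, and then apply it to the truncated Lindstedt expansion.

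For part A, I would insert the formal ansatz $K_\eps^{[\infty]}=\sum_{j\geq 0}\eps^j K_j$, $\mu_\eps^{[\infty]}=\sum_{j\geq 0}\eps^j \mu_j$ into \equ{invf}, expand $\F_{\mu,\eps}\circ K$ in Taylor series around $(\mu_0,0,K_0)$, and equate coefficients of $\eps^j$. At each order $j\geq 1$ this yields a linear equation of the form
\[
\partial_\omega K_j-(\nabla\F_{\mu_0,0}\circ K_0)\,K_j-(\nabla_\mu \F_{\mu_0,0}\circ K_0)\,\mu_j=G_j,
\]
where $G_j$ is an explicit polynomial in $K_0,\dots,K_{j-1},\mu_0,\dots,\mu_{j-1}$ involving derivatives of $\F$ at $(\mu_0,0)\circ K_0$. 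Writing $K_j=M_0 W_j$ with $M_0$ as in the statement, Proposition~\ref{prop:flow1} (applied at $\eps=0$, so $\chi=0$) turns this equation into a pair of cohomology equations of the form $\partial_\omega \varphi=\eta$, both solvable under the Diophantine assumption on $\omega$ modulo averages. The averages are absorbed by choosing $\mu_j$ and $\overline{W_{j2}}$ as solutions of a $2d\times 2d$ linear system whose invertibility is exactly \textbf{HND'}; this is the flow analogue of the computation in \equ{cohj}--\equ{S2}. The bound $\|\partial_\omega K_\eps^{[\le N]}-\F_{\mu_\eps^{[\le N]},\eps}\circ K_\eps^{[\le N]}\|_{\rho'}\le C_N|\eps|^{N+1}$ follows from standard Taylor remainder estimates in an $\A_\rho$ neighbourhood of $K_0$.

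For part B, I would take $(K_a,\mu_a)=(K_\eps^{[\le N]},\mu_\eps^{[\le N]})$ as the approximate solution in the flow a-posteriori theorem. The figure of merit of the Newton step is now the product $\nu(\omega;\tau)\,\nu(\chi(\eps);\omega,\tau)$ coming from solving \equ{cohf}, which is uniformly bounded on $\G_{r_0}$ by the very definition \equ{gf}. Since \textbf{H}$\chi$ gives $|\chi(\eps)|\asymp|\alpha|\,|\eps|^a$ for $\eps$ small, the error in part A dominates as
\[
\|E\|_{\rho'}\le C_N|\eps|^{N+1}\le C_N'|\chi(\eps)|^{(N+1)/a},
\]
and choosing $N$ large enough makes this smaller than the $\mathcal{O}([\nu(\omega;\tau)\nu(\chi(\eps);\omega,\tau)]^2\delta^{4(\tau+d)})$ smallness required by the flow version of \textbf{H5}, uniformly on $\G_{r_0}$ for $r_0$ small. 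The non-degeneracy \textbf{HND'} at $\eps=0$ persists under small perturbations of $\eps$ by continuity of the determinant, so the full hypothesis \textbf{H3} for flows holds on $\G_{r_0}$. The theorem then produces $(K_\eps,\mu_\eps)$ solving \equ{invf} exactly, with the asymptotic bounds of the statement.

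Analyticity in $\eps$ on the interior of $\G_{r_0}$, and continuity up to the boundary, come from the flow analogue of Corollary~\ref{isanalytic}: the Newton iteration consists of algebraic operations, compositions, differentiations and resolutions of constant-coefficient cohomology equations, all of which preserve analytic (respectively continuous) dependence on a parameter, and the convergence is uniform on $\G_{r_0}$ by the uniform smallness condition. Local uniqueness of the normalized solution (the flow version of Lemma~\ref{lem:unique}) is then used to glue together the pieces produced at different centers $\eps_0$. The main obstacle in implementing this plan will be verifying carefully the flow cohomology estimates for \equ{cohf} when $\chi\in\complex$: one needs the tame bound $\|\varphi\|_{\rho-\delta}\le C(\tau,d)\,\nu(\chi;\omega,\tau)\,\delta^{-\tau-d}\|\eta\|_\rho$, which, as in Remark~\ref{rem:exponent}, is not optimal in $\delta$ but is enough because the crucial object is the geometry of the level sets of $\nu(\chi;\omega,\tau)$; the latter is analyzed exactly as in Section~\ref{geometry}, using the preimage interpretation $\G=\chi^{-1}(\Lambda)$ dictated by \equ{gf} and hypothesis \textbf{H}$\chi$.
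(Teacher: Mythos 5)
Your proposal follows the same route as the paper, which in fact leaves Theorem~\ref{mainf} essentially as an exercise with the remark ``The proof of Theorem~\ref{mainf} is similar to that of Theorem~\ref{main} and is left to the reader; it relies on Theorem 20 in \cite{CallejaCL11}.'' Your sketch correctly fills in the intended details: part~A mirrors Section~\ref{sec:Lindstedt} using Proposition~\ref{prop:flow1} and the flow cohomology equation \equ{cohf} with averages absorbed via \textbf{HND'}, and part~B mirrors Section~\ref{sec:partB} by feeding the truncated Lindstedt polynomial into a flow version of Theorem~\ref{mainKAM}, with the uniform smallness controlled on $\G_{r_0}$ through \textbf{H}$\chi$ and Definition~\ref{lambdadiophantinef}.
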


The proof of Theorem~\ref{mainf} is similar to that of Theorem~\ref{main} and is left to the reader;
it relies on Theorem 20 in \cite{CallejaCL11}.

\def\cprime{$'$} \def\cprime{$'$} \def\cprime{$'$} \def\cprime{$'$}
  \def\cprime{$'$} \def\cprime{$'$}

\end{document}